\numberwithin{equation}{section}
\newlist{enumeratea}{enumerate}{2}
\setlist[enumeratea]{label=\upshape(\alph*), leftmargin=0pt, labelsep=5pt,itemindent=2\parindent}
\newlist{enumeratei}{enumerate}{2}
\setlist[enumeratei]{label=\upshape(\roman*),leftmargin=0pt, labelsep=5pt,itemindent=2\parindent}
\newlist{enumerate1}{enumerate}{2}
\setlist[enumerate1]{label=\upshape(\roman*),leftmargin=0pt,labelsep=5pt,itemindent=2\parindent}
\setlist[itemize]{leftmargin=\parindent,itemindent=\parindent}
\newtheorem*{namedtheorem}{\theoremname}
\newcommand{\theoremname}{testing}
\newtheorem{theorem}{Theorem}[section]
\newtheorem{proposition}[theorem]{Proposition}
\newtheorem{proposition-definition}[theorem]{Proposition-Definition}
\newtheorem{lemma}[theorem]{Lemma}
\newtheorem{conjecture}[theorem]{Conjecture}
\theoremstyle{definition}
\newtheorem{definition}[theorem]{Definition}
\newtheorem{remark}[theorem]{Remark}
\theoremstyle{remark}
\newcommand\cA{\mathcal{A}}
\newcommand\cE{\mathcal{E}}
\newcommand\cF{\mathcal{F}}
\newcommand\cH{\mathcal{H}}
\newcommand\cK{\mathcal{K}}
\newcommand\cL{\mathcal{L}}
\newcommand\cM{\mathcal{M}}
\newcommand\cO{\mathcal{O}}
\newcommand\cP{\mathcal{P}}
\newcommand\NN{\mathbb{N}}
\newcommand\PP{\mathbb{P}}
\newcommand\QQ{\mathbb{Q}}
\newcommand\ZZ{\mathbb{Z}}
\newcommand\rD{\mathrm{D}}
\newcommand\cha{{\rm char}}
\newcommand\NS{{\rm NS}}
\newcommand\Hom{{\rm Hom}}
\newcommand\Ext{{\rm Ext}}
\newcommand\cris{{\rm cris}}
\newcommand\arr{\ifinner\to\else\longrightarrow\fi}
\newcommand\ext{\operatorname{Ext}}
\newcommand\Aut{\operatorname{Aut}}
\newcommand\spec{\operatorname{Spec}}
\newcommand{\Pic}{\operatorname{Pic}}
\newcommand{\proj}{\operatorname{Proj}}
\newcommand\rk{\operatorname{rk}}
\newcommand\id{\mathrm{id}}
\newcommand\Sym{\operatorname{Sym}}
\DeclareMathOperator{\sym}{Sym}
\DeclareMathOperator \alb{alb}
\DeclareMathOperator \ch{ch}
\DeclareMathOperator \Sp{\mathfrak{sp}}
\DeclareMathOperator \td{td}
\DeclareMathOperator \km{Km}
\DeclareMathOperator{\tr}{Tr}
\renewcommand{\ch}{{\rm ch}}
\newcommand{\h}{{\mathfrak h}}
\newcommand{\et}{\mathrm{\acute{e}t}}
\DeclareMathOperator{\BK}{BK}
\DeclareMathOperator{\GL}{GL}
\DeclareMathOperator{\Gr}{Gr}
\DeclareMathOperator{\supp}{Supp}
\DeclareMathOperator{\amp}{Amp}
\DeclareMathOperator{\Ql}{\mathbb{Q}_{\ell}}
\DeclareMathOperator{\Zl}{\mathbb{Z}_{\ell}}
\DeclareMathOperator{\Qp}{\mathbb{Q}_{p}}
\renewcommand{\1}{\mathop{\mathds{1}}\nolimits} 
\DeclareMathOperator{\bl}{Bl}
\newcommand{\xbar}[1]{\overline{#1}}
\begin{document}
\title[Supersingular O'Grady varieties of dimension six]{Supersingular O'Grady varieties of dimension six}

\author{Lie Fu}
\address{Universit\'e Claude Bernard Lyon 1\\
Institut Camille Jordan\\
43 Boulevard du 11 novembre 1918\\
69622 Cedex Villeurbanne\\
France}
\address{$\&$}
\address{Radboud University\\
IMAPP\\
Heyendaalseweg 135\\
6525 AJ, Nijmegen\\
Netherlands}
\email{fu@math.univ-lyon1.fr}

\author{Zhiyuan Li}
\address{Shanghai Center for Mathematical Science\\
Fudan University\\
2005 Songhu Road\\
20438 Shanghai, China}
\email{zhiyuan\_li@fudan.edu.cn}

\author{Haitao Zou}
\address{Shanghai Center for Mathematical Science\\
Fudan University\\
2005 Songhu Road\\
20438 Shanghai, China}
\email{htzou17@fudan.edu.cn}

\date{\today}
\thanks{2020 {\em Mathematics Subject Classification:} 	14J28, 14J42, 14D22, 14M20, 14C15, 14C25.}
\thanks{{\em Key words and phrases.} K3 surfaces, symplectic varieties, hyper-K\"ahler varieties, moduli spaces, supersingularity, unirationality, motives.}
\thanks{Lie Fu is supported by the Radboud Excellence Initiative program. Zhiyuan Li is supported by NSFC grants for  General Program (11771086), Key Program (11731004) and the Shu Guang Program (17SG01) of Shanghai Education Commission.}
\maketitle

\begin{abstract}
O'Grady constructed a 6-dimensional irreducible holomorphic symplectic variety by taking a crepant resolution of some moduli space of stable sheaves on an abelian surface. In this paper, we naturally extend O'Grady's construction to fields of positive characteristic $p\neq 2$, called OG6 varieties. Assuming $p\geq 3$, we show that a supersingular OG6 variety is unirational, its rational cohomology group is generated by algebraic classes, and its rational Chow motive is of Tate type. These results confirm in this case the generalized Artin--Shioda conjecture, the supersingular Tate conjecture and the supersingular Bloch conjecture proposed in our previous work, in analogy with the theory of supersingular K3 surfaces.
\end{abstract}

\section{Introduction}
The study on supersingular irreducible symplectic varieties was started in the previous work of the first two authors \cite{FL18}. As a continuation, we study in this paper O'Grady's 6-dimensional irreducible symplectic varieties in positive characteristics, and investigate the general conjectures formulated in \cite{FL18} for such varieties.

\subsection{Background}
In complex geometry, \textit{irreducible symplectic manifolds}, also known as \textit{compact hyper-K\"ahler manifolds}, are by definition simply connected compact K\"ahler manifolds admitting a nowhere degenerate holomorphic 2-form that is unique up to scalars. Together with complex tori and Calabi--Yau manifolds, they form the fundamental building blocks for compact K\"ahler manifolds with vanishing first Chern class, by the Beauville--Bogomolov decomposition theorem \cite{Bogomolov78, MR730926}. We refer the readers to \cite{MR1664696, MR1963559, OGradySurvey} for an overview of the study of these varieties.

By definition, 2-dimensional irreducible symplectic varieties are nothing but K3 surfaces. A significant part of the theory of complex K3 surfaces can be generalized to higher-dimensional irreducible symplectic varieties: deformation theory, quadratic form on the second cohomology group \cite{MR730926}, moduli spaces and period maps, global Torelli theorems \cite{VerbitskyTorelli, VerbitskyTorelliErrata, MarkmanSurvey, HuybrechtsTorelli}, and conjecturally their algebraic cycles \cite{BeauvilleVoisin04, Beau07, Voisin08}, etc. On the other hand, over a field of positive characteristic, the study of K3 surfaces has also been an active and fruitful research topic. One instance is the Tate conjecture \cite{Tate65} for K3 surfaces: the case of finite height was solved in \cite{MR723215, MR819555}, while a complete proof in the supersingular case was achieved only recently \cite{Ma14, Ch13, Pe15, Ch16, KM16}. It is a natural task to explore the higher-dimensional analogue of K3 surfaces in positive characteristics. We use the following working definition proposed in \cite{FL18} (see also \cite{YangISV19}):

\begin{definition}
A smooth projective variety $X$ over an algebraically closed field $k$ is called \emph{symplectic} if 
\begin{itemize}
	\item $\pi_1^{\operatorname{\et}}(X)=0$;
	\item $X$ admits a nowhere degenerate closed algebraic 2-form.
\end{itemize}
In particular, $X$ is even dimensional and has trivial canonical bundle.
A symplectic variety $X$ is called \textit{irreducible symplectic} if moreover we have $\dim H^0(X,\Omega_X^2)=1$. 
\end{definition}

An important feature that only appears in positive characteristic is the existence of supersingular varieties. Fix an algebraically closed field $k$ of characteristic $p>0$. Recall that a K3 surface $S$ defined over $k$ is called supersingular, following Artin \cite{Ar74}, if the second crystalline cohomology $H^2_\cris(S/W(k))$ is a supersingular $F$-crystal, that is, its Newton polygon is a straight line. The aforementioned Tate conjecture implies that it is equivalent to Shioda's condition \cite{Sh74} that the second ($\ell$-adic or crystalline) cohomology group is generated by divisor classes. A systematic study of supersingular  symplectic varieties was initiated in our previous work \cite{FL18}, where two types of supersingularity were defined. More precisely, a  symplectic variety $X$ is called
\begin{itemize}
    \item $2^{nd}$-\textit{Artin supersingular}, if its second crystalline cohomology group $H^2_{\cris}(X/W)$ is a supersingular $F$-crystal;
    \item $2^{nd}$-\textit{Shioda supersingular}, if its Picard number is maximal: $\rho_X=b_2(X)$.
\end{itemize}
Similarly, there are stronger notions of supersingularity that concern the whole cohomology group. For simplicity, let us assume in addition that all the odd Betti numbers of $X$ are zero. Then a symplectic variety $X$ is called
\begin{itemize}
    \item \textit{fully Artin supersingular}, if $H^{2i}_\cris(X/W)$ is a supersingular $F$-crystal for any $i\in \mathbb{N}$;
    \item \textit{fully Shioda supersingular}, if all $\ell$-adic and crystalline cohomology groups are rationally generated by algebraic classes.
\end{itemize}
It is easy to see that the $2^{nd}$ (resp.~fully) Shioda supersingularity implies the $2^{nd}$ (resp.~fully) Artin supersingularity. See \cite[Definitions 2.1 and 2.3]{FL18}) for more details.

\subsection{Conjectural picture}
Motivated by properties of supersingular K3 surfaces, we proposed the following conjecture in \cite{FL18}. It relates the cohomological, geometric, and motivic aspects of supersingular symplectic varieties.

\begin{conjecture}
\label{conj:main}
Let $X$ be a symplectic variety defined over an algebraically closed field $k$, with vanishing odd Betti numbers. If $X$ is  $2^{nd}$-Artin supersingular, then 
\begin{enumeratei}
\item $X$ is fully Shioda supersingular, i.e.~the ($\ell$-adic and crystalline) cycle class map is surjective.
\item $X$ is unirational;
\item The rational Chow motive is of Tate type. In particular, the ($\ell$-adic and crystalline) cycle class map is injective.
\end{enumeratei}
\end{conjecture}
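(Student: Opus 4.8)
The plan is to follow the blueprint established for supersingular K3 surfaces, proving the three assertions in the order (i) $\Rightarrow$ (ii) $\Rightarrow$ (iii), with each feeding into the next. The first reduction is to pass from the hypothesis on $H^2$ to the whole cohomology: I would show that $2^{nd}$-Artin supersingularity forces \emph{full} Artin supersingularity, i.e.\ that every $H^{2i}_\cris(X/W)$ is a supersingular $F$-crystal. The natural tool is the Looijenga--Lunts--Verbitsky Lie algebra $\mathfrak{g}(X)\cong\mathfrak{so}(H^2\oplus U)$ acting on the total cohomology; in characteristic zero this action realizes $H^\ast(X)$ as a representation whose weights, hence slopes, are governed by those of $H^2$. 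The first task is therefore to construct a crystalline (or at least $p$-adic Frobenius-equivariant) incarnation of this action, or of enough of the Lefschetz $\mathfrak{sl}_2$-triples it contains: since each Lefschetz operator shifts by a Tate twist of slope $1$, starting from the middle-slope crystal $H^2$ forces all slopes on $H^{2i}$ to the middle, which is precisely full Artin supersingularity. I stress that this step genuinely requires the representation-theoretic mechanism rather than the naive hope that $H^\ast$ be generated by $H^2$: that generation property holds for the $\mathrm{K3}^{[n]}$ and generalized Kummer types but \emph{fails} for the O'Grady types, so the argument cannot bypass the Lie-algebra structure.

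Next I would upgrade full Artin supersingularity to full Shioda supersingularity (surjectivity of the cycle class map) by establishing the Tate conjecture for $X$ in every codimension. The strategy mirrors the K3 case: lift $X$ to a smooth projective family over a mixed-characteristic base whose generic fibre carries a Tate-type structure, and transport algebraicity by a variational-Tate and specialization argument in the spirit of Maulik, Charles and Madapusi Pera. In codimension one this is exactly the nontrivial $2^{nd}$-Artin $\Rightarrow 2^{nd}$-Shioda implication; the new content lies in higher codimension, where one exploits that on a fully Artin supersingular variety the twisted crystal $H^{2i}(-i)$ is unit-root, so that Frobenius-fixed classes are abundant and, given the requisite algebraicity input, span.

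Unirationality is the geometric heart and the principal obstacle. For supersingular K3 surfaces this is the theorem of Rudakov--Shafarevich and Liedtke, proved by placing the surface in an explicit family and specializing; for a general symplectic variety there is no known uniform mechanism. The realistic route is: (a) use crystalline Torelli-type results to identify a supersingular $X$ as a (limit of a) moduli space of sheaves on a supersingular abelian or K3 surface $S$; (b) transfer unirationality of $S$—itself supersingular, hence unirational by Rudakov--Shafarevich and Liedtke—to the moduli space via a dominant rational map from a projective bundle or a relative Hilbert scheme; (c) conclude by specialization, using that unirationality is preserved in the relevant families. The obstruction is that this requires knowing that \emph{every} deformation type of supersingular symplectic variety is birational to such a moduli space and that the transfer maps exist, which at present is available only deformation-type by deformation-type; this is exactly why the body of the paper must argue the OG6 case by explicit geometry.

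Finally I would deduce (iii) from (ii). Unirationality gives $\CH_0(X)_\QQ=\QQ$, so by the Bloch--Srinivas decomposition of the diagonal the rational Chow motive $\h(X)$ is a summand of a motive supported on a divisor; by induction on dimension together with Kimura--O'Sullivan finite-dimensionality this places $\h(X)$ in the subcategory generated by Tate motives. Combined with the full Shioda supersingularity supplied by (i)—which furnishes enough algebraic classes to split off all Lefschetz summands—one obtains that $\h(X)$ is of Tate type, whence the cycle class map is also injective. The main difficulty throughout remains Step 3: lacking a general unirationality technique, the conjecture can presently be established only for those deformation types, such as the OG6 varieties treated below, for which an explicit geometric model is at hand.
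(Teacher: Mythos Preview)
The statement you are attempting to prove is stated in the paper as a \emph{conjecture}, not a theorem; the paper offers no proof of it in general and instead establishes only the special case of OG6 varieties (Theorem~\ref{thm:main}) by explicit geometry. So there is no ``paper's own proof'' to compare against, and your proposal should be read as a heuristic strategy rather than a proof.

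That said, several steps in your outline have genuine gaps that go well beyond technical details. First, the crystalline Looijenga--Lunts--Verbitsky action you invoke to pass from $2^{\mathrm{nd}}$-Artin to full Artin supersingularity has not been constructed in positive characteristic; even the existence of enough Lefschetz $\mathfrak{sl}_2$-triples compatible with Frobenius is not established for a general symplectic variety. Second, your upgrade to full Shioda supersingularity amounts to the Tate conjecture in every codimension, which is wide open: the observation that $H^{2i}(-i)$ is unit-root only says Frobenius-invariant classes span, not that they are algebraic. Third, your deduction of (iii) from (ii) via Bloch--Srinivas plus Kimura--O'Sullivan finite-dimensionality is circular or conjectural: finite-dimensionality is itself unproven in general, and the inductive step (motives of divisors are of Tate type) does not follow, since a divisor in a symplectic variety is not symplectic and need not satisfy any inductive hypothesis.

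It is worth noting that the paper's proof of the OG6 case does \emph{not} follow your logical order (i)$\Rightarrow$(ii)$\Rightarrow$(iii). Instead it proves (i)$\Rightarrow$(iii) and (i)$\Rightarrow$(ii) independently and directly: the motive is computed by realizing the OG6 variety, up to quasi-liftable birational equivalence, as dominated by a moduli space of sheaves on a supersingular Kummer K3 (the Mongardi--Rapagnetta--Sacc\`a cover), whose motive is already known to be Tate; unirationality comes from the same cover together with Shioda's unirationality of supersingular Kummer surfaces. No step uses Bloch--Srinivas, LLV, or a higher-codimension Tate argument. Your own final paragraph correctly identifies that the conjecture is currently accessible only deformation-type by deformation-type; the body of your proposal, however, does not constitute a proof even modulo that concession.
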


\begin{remark} In the above conjecture,
\begin{itemize}
    \item the assumption on Betti numbers is for the simplicity of the statement. See \cite[Conjectures A and B]{FL18} for the general version where non-zero odd cohomology is allowed;
    \item Part (i) says that all notions of supersingularity defined in \cite{FL18} are equivalent. This can be seen as a generalization of the Tate conjecture \cite{Tate65} in the supersingular case.
    \item Part (ii) is the generalization of the Artin--Shioda conjecture on the unirationality of supersingular K3 surfaces \cite{Ar74, Shioda77}. 
    \item Part (iii) can be viewed as a Bloch-type conjecture (\emph{cf.}~\cite{MR2723320}, \cite[\S~11.2]{MR2115000} and \cite[Conjecture 23.21]{MR1988456}) in the supersingular case.
\end{itemize}
\end{remark}

\noindent Here are some evidence of Conjecture \ref{conj:main}:
\begin{itemize}
    \item For K3 surfaces, 
    \begin{enumeratei}
    \item is the Tate conjecture, proved in \cite{Ma14, Ch13, Pe15, Ch16, KM16}.
    \item was proved for $p=2$ by Rudakov--\v Safarevi\v c \cite{RS78} and remains open in general (see however \cite{Li15, BL18, BL19}).
    \item was proved by Fakhruddin \cite{Fak02} (his result is for $p\geq 5$, but one can show it in general by using \cite{BL18}, see \cite[\S 4.5]{FL18}). 
    \end{enumeratei} 
    \item For smooth projective moduli spaces of stable sheaves on K3 surfaces, \cite[Theorem 1.3]{FL18} established (i) and (iii), and reduces (ii) to the unirationality of supersingular K3 surfaces.
    \item For the Albanese fiber of a moduli space of stable sheaves on an abelian surface, provided that it is smooth, \cite[Theorem 1.4]{FL18} proved the analogue of (i) and (iii) with non-vanishing odd cohomology, and showed that it is rationally chain connected, a weaker version of (ii).
\end{itemize}

Note that in these results, the symplectic varieties considered are all deformation equivalent to one of the examples constructed by  Beauville \cite{MR730926}, namely, Hilbert schemes of K3 surfaces and generalized Kummer varieties. The primary goal of this paper is to seek for examples of different deformation type that satisfy Conjecture \ref{conj:main}.

\subsection{Main results}
Over the complex numbers, apart from the two infinite series of deformation types of irreducible symplectic varieties of Beauville, so far we only know other two sporadic families discovered by O'Grady \cite{OG10, OG6}:
\begin{itemize}
    \item Let $S$ be a projective K3 or abelian surface. Let $v_0\in H^*(S, \ZZ)$ with $v_0^2=2$ and $H$ a sufficiently general polarization. The moduli space $M_H(S, 2v_0)$ of $H$-semistable sheaves on $S$ with Mukai vector $2v_0$ is singular. In O'Grady \cite{OG10}, for $v_0=(1, 0, -1)$, a 10-dimensional holomorphic symplectic manifold $\widetilde{M}_H(S, 2v_0)$ was constructed as a crepant resolution of $M_H(S, 2v_0)$. This construction was extended to any $v_0$ with $v_0^2=2$ by Lehn--Sorger \cite{LS06}, obtaining symplectic varieties that are deformation equivalent to O'Grady's example by \cite{PR13}. When $S$ is a K3 surface, the crepant resolution $\widetilde{M}_H(S, 2v_0)$ is irreducible symplectic, called an \textit{OG10 variety}.
    \item When $S$ is an abelian surface, for any $v_0$ with $v_0^2=2$, the Albanese map of the crepant resolution 
    \[\operatorname{alb} \colon \widetilde{M}_H(S, 2v_0)\longrightarrow S\times S^\vee\]
    is an isotrivial fibration. Any fiber of this fibration is a 6-dimensional irreducible symplectic variety \cite{OG6, LS06}, called an \textit{OG6 variety}. 
\end{itemize}
We will adapt the construction of O'Grady and Lehn--Sorger to positive characteristics $>2$ (Theorem \ref{thm:OGconstruction} and Theorem \ref{prop:cohomological}). The resulting variety is still called an OG6 variety. The following is the main result of this article, confirming Conjecture~\ref{conj:main} for supersingular OG6 varieties.

\begin{theorem} 
\label{thm:main}
Let $k$ be an algebraically closed field of characteristic $p\geq 3$, and $A$ an abelian surface defined over $k$. Let $v_0$ be a Mukai vector on $A$ with $v_0^2=2$ and set  $v=2v_0$. For any $v$-generic polarization $H$, let $K_H(A, v)$ be the Albanese fiber of the moduli space of $H$-semistable sheaves with Mukai vector $v$. Then O'Grady's crepant resolution $\widetilde{K}_H(A,v)$ is a symplectic variety and the following conditions are equivalent:
\begin{enumerate1}
    \item  $A$ is supersingular;
   \item $\widetilde{K}_H(A,v)$ is $2^{nd}$-Artin supersingular;
   \item $\widetilde{K}_H(A,v)$ is unirational;
   \item The rational Chow motive of $\widetilde{K}_H(A,v)$ is of Tate type;
   \item $\widetilde{K}_H(A,v)$ is fully Shioda supersingular: all rational ($\ell$-adic or crystalline) cohomology groups are generated by algebraic classes.
\end{enumerate1}
Moreover, if one of these conditions is satisfied, $\widetilde{K}_H(A,v)$ is an irreducible symplectic variety. 
\end{theorem}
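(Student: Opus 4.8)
The plan is to establish a cycle of implications $(1)\Rightarrow(2)\Rightarrow(3)\Rightarrow(4)\Rightarrow(5)\Rightarrow(1)$, following the template set by \cite[Theorem 1.3 and 1.4]{FL18} for moduli of sheaves on K3 surfaces and the Albanese fiber case, but now pushing through the crepant resolution. The geometric heart is $(1)\Rightarrow(3)$: assuming $A$ supersingular, I would first show that the (singular) Albanese fiber $K_H(A,v)$ is unirational, or at least rationally connected after the resolution. Here one uses that a supersingular abelian surface is isogenous to a product of supersingular elliptic curves and, crucially, is unirational (indeed dominated by a product of supersingular curves is not quite enough — one needs that supersingular $A$ has a dominant rational map from a rational surface, which for $p\ge 3$ follows from supersingular K3--type arguments via the Kummer surface $\mathrm{Km}(A)$, which is a supersingular K3 surface, hence unirational by Artin--Shioda--Rudakov--\v Safarevi\v c, and $A$ is dominated by $\mathrm{Km}(A)$ up to isogeny). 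Then, as in \cite{FL18}, one propagates unirationality along the Quot/flag-scheme presentation of the moduli space: $M_H(A,v)$ admits a stratification whose pieces are built from symmetric products and projective bundles over powers of $A$, and one shows each stratum is dominated by a rational variety; the Albanese map being isotrivial, the fiber $K_H(A,v)$ inherits this; finally O'Grady's crepant resolution $\widetilde K_H(A,v)\to K_H(A,v)$ is birational over the smooth locus and the exceptional divisor is itself rational-bundle-like (this is where Theorems \ref{thm:OGconstruction} and \ref{prop:cohomological} on the explicit structure of the resolution are invoked), so unirationality lifts.

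Next, $(3)\Rightarrow(4)$: a unirational smooth projective variety over $k$ has, by \cite[\S 2]{FL18}-type arguments (Bloch--Srinivas / decomposition of the diagonal, plus the explicit blow-up formula for the Chow motive along the resolution), Chow motive generated by the motives of lower-dimensional varieties; combined with the cohomological input (the odd Betti numbers of an OG6 variety vanish, by Theorem \ref{prop:cohomological}, and $\widetilde K_H(A,v)$ has the Hodge/$\ell$-adic Betti numbers of the complex OG6, hence all cohomology is of Tate type once we know it is generated by algebraic cycles) one upgrades to: the rational Chow motive is a direct sum of Tate twists $\mathds{1}(-i)$. Concretely I would run the argument of \cite[Theorem 1.3(iii)]{FL18}: supersingularity of $A$ makes the motive of $A$ itself of Tate type, the motive of $M_H(A,v)$ is a summand of an explicit combination of motives of powers of $A$ (via the Beauville--Laszlo / wall-crossing description, or via the fact that moduli spaces of sheaves on abelian surfaces have motive governed by that of the surface), and the blow-up formula for O'Grady's resolution keeps us inside the Tate-type subcategory. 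The implication $(4)\Rightarrow(5)$ is formal (Tate-type motive $\Rightarrow$ cohomology spanned by algebraic classes, using the cycle class maps), $(5)\Rightarrow(2)$ is immediate ($2^{nd}$-Shioda supersingular $\Rightarrow$ $2^{nd}$-Artin supersingular by \cite[\S 2]{FL18}), and $(2)\Rightarrow(1)$ goes by relating $H^2_{\cris}(\widetilde K_H(A,v)/W)$ to $H^2_{\cris}(A/W)$: the second cohomology of an OG6 variety contains (a Tate twist of) that of the abelian surface $A$ as a sub-$F$-crystal — this comes from the isotrivial Albanese fibration $\widetilde M_H(A,v)\to A\times A^\vee$ and the Künneth/Leray analysis — so a supersingular $H^2$ on $\widetilde K_H$ forces $H^2_{\cris}(A/W)$ to be supersingular, i.e.\ $A$ is supersingular.

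Finally, for the "moreover" clause: once $(1)$ holds, irreducible symplecticity means $\pi_1^{\et}=0$ and $\dim H^0(X,\Omega_X^2)=1$. The vanishing of $\pi_1^{\et}$ and the Hodge number $h^{2,0}=1$ are characteristic-free consequences of the explicit construction — they follow from the complex case by a specialization/lifting argument (the OG6 variety lifts to characteristic zero, or one computes directly on the resolution using Theorem \ref{prop:cohomological}), and supersingularity is not actually needed for this last statement; but stating it under the hypothesis is harmless. In fact I expect one can show $\widetilde K_H(A,v)$ is an irreducible symplectic variety unconditionally, and the "moreover" is phrased this way only to package it with the theorem.

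The main obstacle is $(1)\Rightarrow(3)$, the unirationality: propagating unirationality from the supersingular abelian surface through the moduli space of \emph{rank $\ge 2$} (non-primitive Mukai vector $v=2v_0$) semistable sheaves and then through O'Grady's crepant resolution of its singular locus. Unlike the K3 case one cannot simply invoke a chain of $\mathbb{P}^n$-bundles; the singularities of $M_H(A,2v_0)$ along the strictly semistable locus and the precise fibre structure of the Lehn--Sorger resolution must be controlled in characteristic $p\ge 3$, which is exactly what Theorems \ref{thm:OGconstruction} and \ref{prop:cohomological} are set up to provide, but checking that their output is birationally rational (not merely rationally connected) is the delicate point.
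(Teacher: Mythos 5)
There is a genuine gap, and it sits exactly at the step you yourself identify as the heart of the matter, namely $(1)\Rightarrow(3)$. Your plan rests on the claim that a supersingular abelian surface $A$ admits a dominant rational map from a rational surface, ``via the Kummer surface $\mathrm{Km}(A)$\dots and $A$ is dominated by $\mathrm{Km}(A)$ up to isogeny.'' This is false: no abelian variety is dominated by a rational (or unirational, or K3) variety, since any rational map from such a variety to an abelian variety is constant (the Albanese of a unirational variety is trivial); the Kummer surface is a quotient-resolution of $A$, not a cover of it. Consequently the proposed propagation of unirationality through a Quot/stratification presentation of $M_H(A,v)$ --- whose strata are built from powers of $A$, which are themselves never unirational --- cannot get off the ground, and it is not explained how the Albanese \emph{fiber} $K_H(A,v)$ would inherit anything from such strata. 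The paper's actual mechanism is different and is the key idea you are missing: first reduce, via the quasi-liftably birational equivalences of Theorem \ref{thm:Birational}, to the single Mukai vector $\bar v=(0,2\Theta,2)$; then use the Mongardi--Rapagnetta--Sacc\`a construction (Proposition \ref{prop1}) to produce a dominant degree-$2$ rational map $M_{H'}(\mathrm{Km}(A),\tilde v)\dashrightarrow K_H(A,\bar v)$ from a $6$-dimensional moduli space of sheaves on the Kummer K3 surface; finally $M_{H'}(\mathrm{Km}(A),\tilde v)$ is birational to $\mathrm{Km}(A)^{[3]}$ by \cite[Theorem 1.3]{FL18} and $\mathrm{Km}(A)$ is unirational by Shioda. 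Unirationality flows from the Kummer K3 side, not from $A$.

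Two further steps of your outline do not work as stated. First, $(3)\Rightarrow(4)$: unirationality plus Bloch--Srinivas/decomposition-of-the-diagonal arguments controls $\mathrm{CH}_0$ but does not force the full rational Chow motive to be of Tate type; the paper never argues $(3)\Rightarrow(4)$, but proves $(1)\Rightarrow(4)$ directly (Theorem \ref{thm:TateMotive}) by lifting the MRS cover to characteristic zero, resolving it by explicit blow-ups as in \cite[\S 5]{MRS18}, and invoking the Tate-type motives of $M_{H'}(\mathrm{Km}(A),\tilde v)$ and of supersingular abelian varieties from \cite{FL18}, together with the blow-up formula and birational invariance of the motive for symplectic varieties. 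Second, in the ``moreover'' clause, $h^{2,0}=1$ does not follow from the complex case by specialization alone: semicontinuity only gives $h^{2,0}\geq 1$ in characteristic $p$, and Hodge numbers can jump. The paper instead pulls back $2$-forms along the MRS double cover (injective since $p>2$) and uses that the moduli space on $\mathrm{Km}(A)$ is irreducible symplectic, which again requires the supersingular hypothesis through Theorem \ref{thm:Birational}; so this part is not ``unconditional'' by the argument you sketch. Your treatment of $(2)\Leftrightarrow(1)$ via the relation between $H^2_{\cris}$ of the OG6 variety and of $A$ is in the right spirit and is what Lemma \ref{thm:theorem1} does, but the precise input is the $F$-(iso)crystal computation $H^2_{\cris}(\widetilde K_H(A,v))_K\cong v^{\bot}\otimes K\oplus K(-1)$ of Theorem \ref{prop:cohomological}, not a Leray argument for the Albanese fibration of $\widetilde M_H(A,v)$, whose fiber class does not directly carry that decomposition.
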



\begin{remark}
According to Theorem \ref{thm:main}, the OG6 construction provides a 1-dimensional family of supersingular symplectic varieties. It is very interesting to investigate the supersingular locus in the moduli space of symplectic varieties of OG6 deformation type. A natural question is whether every supersingular symplectic varieties of OG6 type is birational to an OG6 construction $\widetilde{K}_H(A,v)$. We believe that similarly to the case of symplectic varieties of $K3^{[n]}$ deformation type, there exist supersingular symplectic varieties of OG6 deformation type arising as moduli spaces of \textit{twisted} sheaves on  supersingular abelian surfaces, which are not birational to $\widetilde{K}_H(A,v)$. In particular, we expect that the supersingular locus has dimension at least $2$. 
\end{remark}

The proof of Theorem \ref{thm:main} essentially splits into two steps:
\begin{enumeratea}
\item Upon improving the techniques developed in \cite{FL18}, we show in Theorem \ref{thm:Birational} that given a supersingular abelian surface $A$, all such OG6 varieties $\widetilde{K}_H(A,v)$, for different choices of $v$, are birationally equivalent with good lifting properties (Definition \ref{def:quasiliftable}). 
\item For certain choice of Mukai vector $v_0$, we use a construction of Mongardi--Rapagnetta--Sacc\`a \cite{MRS18}, which relates the OG6 variety to a moduli space of stable sheaves on the Kummer K3 surface of $A$, which was thoroughly studied in our previous work \cite{FL18}.
\end{enumeratea}
The technique of lifting to characteristic zero is used in several places, but note that the proof of the unirationality result does not rely on the lifting argument.

The paper is organized as follows. In \S \ref{chapter:OGLS} we generalize the construction of O'Grady \cite{OG6} and Lehn--Sorger \cite{LS06} to odd characteristic and show that thus obtained OG6 varieties are symplectic varieties. In \S \ref{chapter:BiratSSOG6}, we establish the birational equivalences between supersingular OG6 varieties (Step (a) above). In \S \ref{chapter:Proof}, we adapt Mongardi--Rapagnetta--Sacc\`a's construction \cite{MRS18} and finish the proof of Theorem \ref{thm:main} (Step (b)).
 
\noindent\textbf{Convention:} 
\begin{itemize}
    \item $k$ is the base field, which is always assumed to be algebraically closed with $\cha(k)=p >0$.
    \item $W=W(k)$ is the ring of Witt vectors of $k$, which comes  equipped with a morphism $\sigma:W\rightarrow W$ induced by the Frobenius morphism.
    \item $K$ is the fraction field of $W$.
    \item $B_{\cris}$ is the crystalline period ring of $K$.
    \item For an algebraic variety $X$ over a field $E$ and an integer $i$, $H^i_{\ell}(X)$ denotes the $i$-th integral $\ell$-adic cohomology $H^i_{\et}(X_{\xbar{E}},\Zl)$, and $H^i_{\ell}(X)_{\Ql}$ denotes the rational $\ell$-adic cohomology if $\ell \neq \cha(E)$.
    \item For a $k$-algebraic variety $X$ and $i\in \mathbb{N}$, we denote by $H^i_{\cris}(X/W)$ the $i$-th integral crystalline cohomology of $X$, which is a $W$-module whose rank is equal to the $i$-th Betti number of $X$.
   We set $H^i (X):=H^i_{\cris}(X/W)/\text{torsion}$ and $~H^i(X)_K=H^i_{\cris}(X/W)\otimes_{W} K$. 
 Then $H^i(X)$ is endowed  with a natural $\sigma$-linear map $\varphi: H^i(X)\rightarrow H^i(X)$ induced from the absolute Frobenius morphism $F\colon X\to X$. The pair $(H^i(X), \varphi)$ (\emph{resp.}~$(H^i(X)_K, \varphi_{K})$) forms therefore an $F$-crystal (\emph{resp.}~$F$-isocrystal).
    \item For a Breuil--Kisin module $M$ and an integer $n$, the notation $M\lbrace n \rbrace$ stands for the $n$-th Tate twist defined in \cite[Example 4.2]{BMS18}.
    \item $\h$ is the contravariant functor that associates a variety its rational Chow motive.
\end{itemize}

\noindent\textbf{Acknowledgement:} We thank Giovanni Mongardi for helpful discussions. We thank also the referee for his or her stimulating comments.

\section{O'Grady--Lehn--Sorger's construction in positive characteristic}\label{chapter:OGLS}
In this section, we adapt the construction of Lehn--Sorger and  O'Grady to obtain a 6-dimensional symplectic variety over a field of positive characteristic, as a crepant resolution of a singular moduli spaces of semistable sheaves on an abelian surface.

\subsection{Moduli spaces of sheaves on abelian surfaces}
One natural way to construct symplectic varieties is to consider certain moduli spaces of coherent sheaves on abelian and K3 surfaces. Over the complex numbers, this is an observation of Mukai \cite{Mukai84}, which is adapted to fields of positive characteristic in \cite[Propositions 4.4 and 6.9]{FL18}. Let $A$ be an abelian surface over $k$.  We denote by  $$\widetilde \NS(A)=\ZZ\cdot {\mathds 1}\oplus \NS(A)\oplus \ZZ\cdot\omega$$ the \emph{algebraic Mukai lattice} of $A$, where $\mathds 1$ is the fundamental class of $S$ and $\omega$ is the class of a point. An element $r\cdot {\mathds 1}+L+s\cdot \omega$ of $\widetilde \NS(A)$, where $r, s\in \ZZ$ and $L\in \NS(A)$, is often denoted by $(r, L, s)$. The lattice structure on $\widetilde \NS(A)$ is given by the \emph{Mukai pairing} $\left<-,-\right>$: 
\begin{equation}\label{pairing}
\left<(r, L, s), (r', L', s')\right>=(L\cdot L')-rs'-r's \in \ZZ.
\end{equation}
For a coherent sheaf $\cF$ on $A$, its \emph{Mukai vector} is the same as its Chern character:
$$v(\cF):=\ch(\cF)\sqrt{\td(A)}= \left(\rk(\cF), c_1(\cF), \chi(\cF) \right)\in \widetilde{\NS}(A). $$

Let us recall the notion of genericity of polarization. 

\begin{definition}
\label{def:generic}
Let $v\in \widetilde \NS(A)$.
An ample line bundle $H$ is called $v$-\textit{generic} if
\begin{itemize}
    \item when $v$ is primitive, all $H$-semistable sheaves on $A$ are  $H$-stable;
    \item when $v= n v_0$ with $n\in \NN$ and $v_0\in \widetilde \NS(A)$ primitive,   $H$ is $v_0$-generic and for any stable factor $\cE$ of a $H$-semistable sheaf $\cF$, we have $v(E)= m v_0$ with $1\leq m \leq n$.
\end{itemize}
\end{definition}

Let us discuss the wall-and-chamber structure on the ample cone $\text{Amp}(A)$ with respect to $v$. This plays an important role in the proof of our main theorem. 
\begin{lemma}
For any torsion free $H$-semistable sheaf $\cF$ on $A$ over an  algebraically closed field $k$, the Bolgomolov inequality
\begin{equation}\label{eq:Bogomolovinequality}
\Delta(\cF) =  2 \rk(\cF) c_2(\cF) - (\rk(\cF) -1) c_1^2(\cF) \geq 0 
\end{equation}
holds.
\end{lemma}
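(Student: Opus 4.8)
The inequality $\Delta(\cF)\geq 0$ is the Bogomolov inequality for torsion-free $\mu_H$-semistable sheaves (note that $H$-semistability, in either the Gieseker or the slope sense, forces $\mu_H$-semistability). Over $\CC$ it is classical, but in characteristic $p$ it can genuinely fail on a general surface, the source of trouble being that Frobenius pullback need not preserve $\mu$-semistability; lifting $A$ to characteristic zero does not obviously help, since an arbitrary semistable sheaf need not lift. The plan is instead to exploit the special geometry of an abelian surface — namely that $\Omega^1_A\cong\cO_A^{\oplus 2}$ is trivial — to remove this obstruction, via one of two essentially equivalent routes: (a) quote Langer's positive-characteristic Bogomolov inequality, which holds up to a correction term built from the Frobenius instability of $\Omega^1_X$, and observe that this correction vanishes identically for $X=A$; or (b) reduce by hand to the case of a \emph{strongly} $\mu_H$-semistable sheaf, for which the classical restriction argument survives unchanged.

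Following route (b): the first step would be to show that every torsion-free $\mu_H$-semistable sheaf $\cF$ on $A$ is automatically strongly $\mu_H$-semistable, i.e.\ that all iterated Frobenius pullbacks $(F_A^{n})^{*}\cF$ remain $\mu_H$-semistable. For this one factors the absolute Frobenius pullback through the relative Frobenius $F_{A/k}\colon A\to A^{(p)}$, which is an isogeny of abelian surfaces, and argues that the pullback of a $\mu$-semistable sheaf along an isogeny of abelian varieties is again $\mu$-semistable — the étale part by averaging a would-be maximal destabilizing subsheaf over a Galois closure, and the purely inseparable part using that the cotangent bundle of an abelian variety is trivial — and then iterates. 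The second step is the Bogomolov inequality for strongly $\mu_H$-semistable torsion-free sheaves on a smooth projective surface: restrict $\cF$ to a general curve $C\in|mH|$ with $m\gg 0$, so that $\cF|_{C}$ stays semistable by the Mehta--Ramanathan restriction theorem (valid in positive characteristic), and compare a Riemann--Roch computation for $\cF\otimes\cF^{\vee}$ twisted by powers of $H$ on $A$ with the corresponding one on $C$; the Frobenius defect that would otherwise spoil this comparison is exactly what strong semistability annihilates, leaving $\Delta(\cF)\geq 0$.

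The main obstacle is the first step, namely controlling $\mu$-semistability under iterated Frobenius pullback. The cleanest and most robust way to discharge it — and the route I would ultimately take in the write-up — is (a): invoke Langer's version of the Bogomolov inequality (see also the discussion of sheaves on K3 and abelian surfaces in \cite{FL18}), whose correction term is an explicit function of the Frobenius instability of $\Omega^1_X$, and note that it is zero here because $A$ has trivial cotangent bundle. This reduces the proof to a citation together with the one-line observation that $\Omega^1_A$ is free, all the positive-characteristic subtleties being hidden inside Langer's theorem.
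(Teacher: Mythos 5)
Your proposal is correct and matches the paper's argument in substance: the paper follows exactly your route (b), first proving in a separate lemma that every torsion-free $H$-semistable sheaf on $A$ is strongly semistable because $L_{\max}(\Omega^1_{A/k})=0$ (citing Langer's Corollary 6.2), and then invoking Langer's Theorem 0.1 for strongly semistable sheaves. Your preferred route (a), quoting Langer's inequality with the Frobenius correction term and noting it vanishes since $\Omega^1_A$ is trivial, is just a compressed form of the same reduction, so there is no essential difference.
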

\begin{proof}
We will show that $\cF$ is strongly semistable in Lemma \ref{lem:ss=strongss}, thus we can invoke Langer's result \cite[Theorem 0.1]{La04}.
\end{proof}

Given a Mukai vector $v \in \widetilde{\NS}(A)$, we can see the $\Delta(-)$ is constant for all pure sheaves whose Mukai vectors are $v$. Thus we will denote it by $\Delta$ for simplicity if the Mukai vector is fixed. If $\rk(v) >0$ and $\Delta > 0$, then we define the following set
\[
W_{v}:=\left\{ D \in \NS(A) \Big| - \frac{\rk(v)}{4} \Delta \leq D^2 <0 \right\}.
\]
If $\Delta=0$, then we will set $W_v = \emptyset$.
\begin{definition}
The $v$-wall in $\amp(A)$ associated to some $D \in W_v$ is the hyperplane in $\amp(A)$ defined as
\[
W^D \coloneqq \left\{ H \in \amp(A) \big| H \cdot D =0 \right\}.
\]
A $v$-chamber means a connected component of
\[
\amp(A) \backslash \bigcup_{D \in W_v}W^D.
\]
\end{definition}
\begin{lemma}\label{prop:10-1wallstructure} Let $H$ be an ample divisor on $A$. If for any $(-2)$-class $D \in \NS(A)$ we have $D \cdot  H \neq 0$, then $H$ is $(2,0,-2)$-generic. In particular, if $H$ lies in any $(2,0,-2)$-chamber, then $H$ is $(2,0,-2)$-generic.
\end{lemma}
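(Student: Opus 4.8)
The plan is to deduce Lemma~\ref{prop:10-1wallstructure} from the general definition of $v$-genericity (Definition~\ref{def:generic}) applied to $v=(2,0,-2)=2v_0$ with $v_0=(1,0,-1)$, combined with the wall-and-chamber description just set up. First I would record that $v_0=(1,0,-1)$ is primitive and that $v_0^2=\langle(1,0,-1),(1,0,-1)\rangle = 0-(1)(-1)-(-1)(1)=2$, so indeed $\Delta$ is the discriminant attached to $v_0$ (or to $v$) and $W_v$ consists of those $D\in\NS(A)$ with $-\tfrac{\rk(v)}{4}\Delta\le D^2<0$. The key numerical input is that for a rank-$2$, $c_1=0$, $\chi=-2$ sheaf one computes $\Delta = 2\cdot 2\cdot c_2 - (2-1)c_1^2 = 4c_2$, and from $\chi(\cF)=\rk\cdot 1 + \tfrac12 c_1^2 - c_2 = 2 - c_2 = -2$ one gets $c_2=4$, hence $\Delta=16$ and $-\tfrac{\rk(v)}{4}\Delta = -\tfrac{2}{4}\cdot 16 = -8$. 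Wait --- I should double-check the convention for $\chi$ on an abelian surface (where $\mathrm{td}$ contributes nothing beyond $\chi(\cO_A)=0$), so that $\chi(\cF) = \tfrac12 c_1^2 - c_2$ and then $c_2 = \tfrac12 c_1^2 - \chi = -(-2) = 2$ when $c_1=0$, giving $\Delta = 4c_2 = 8$ and $-\tfrac{\rk(v)}{4}\Delta = -4$. In either case the point is the same: $W_v$ is contained in the set of classes $D$ with $D^2<0$ bounded below, and in particular every $(-2)$-class lies in $W_v$ (since $-2$ is within the allowed range). Thus the $v$-walls $W^D$ include all the hyperplanes $\{H : H\cdot D = 0\}$ for $(-2)$-classes $D$.

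The main structural step is then: if $H$ is ample and $H\cdot D\neq 0$ for every $(-2)$-class $D\in\NS(A)$, then $H$ is $(2,0,-2)$-generic in the sense of Definition~\ref{def:generic}. Here I would argue in two parts, matching the two bullets of that definition. For the first bullet (with the primitive vector $v_0=(1,0,-1)$): a strictly $H$-semistable but not $H$-stable sheaf $\cF$ with $v(\cF)=v_0$ would admit a sub/quotient $\cE$ with the same reduced Hilbert polynomial, forcing $v(\cE)$ and $v_0-v(\cE)$ to have vanishing $H$-slope difference; a standard Hodge-index / Bogomolov-inequality computation (exactly as in the complex case, using the Bogomolov inequality \eqref{eq:Bogomolovinequality} established above via Langer) shows the difference class $\delta = \rk(\cE)v_0 - \rk(v_0)v(\cE)$ projects to a negative-square class $D\in\NS(A)$ with $H\cdot D=0$ and $D^2$ in the relevant range, and one checks $D^2 = -2$ in this rank-one, discriminant-two situation --- contradicting the hypothesis. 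For the second bullet (with $v=2v_0$): one needs that every stable factor $\cE$ of an $H$-semistable $\cF$ with $v(\cF)=v$ has $v(\cE)=mv_0$ with $1\le m\le 2$; since $v_0$ is primitive and $v=2v_0$, any Jordan--Hölder factor has Mukai vector a positive multiple $mv_0$ with $m\in\{1,2\}$ automatically once we know the reduced Hilbert polynomials agree, and the genericity of $H$ with respect to $v_0$ (the first bullet) rules out the proper destabilization within the $m=1$ pieces. The final sentence of the lemma is then immediate: a point of a $(2,0,-2)$-chamber avoids all $W^D$ with $D\in W_v$, in particular all $W^D$ with $D$ a $(-2)$-class, so the hypothesis of the first half applies.

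The step I expect to be the main obstacle is the precise matching between ``$H$ meets no $(-2)$-wall'' and the abstract genericity condition --- specifically pinning down that the destabilizing class that arises for the vector $(2,0,-2)$ (or its primitive part $(1,0,-1)$) is forced to be a $(-2)$-class and not merely some class in the broader set $W_v$. This requires the numerical lemma that for Mukai vector $v_0$ with $v_0^2=2$ and $\rk(v_0)=1$, a wall can only be cut out by a class $D$ with $D^2=-2$: if $\cF$ has $v(\cF)=v_0$ and $0\to\cE\to\cF\to\cQ\to 0$ with equal reduced Hilbert polynomials, then writing $v(\cE)=(r',L',s')$ the class $D:=\rk(\cE)c_1(\cF)-\rk(\cF)c_1(\cE)$ satisfies, via Mukai-pairing bookkeeping and $v_0^2=2$, the identity $D^2 = -2\cdot(\text{positive integer})$ bounded so that $D^2=-2$; I would carry this out by expanding $\langle v(\cE),v(\cQ)\rangle$ and using $\langle v_0,v_0\rangle = \langle v(\cE),v(\cE)\rangle + 2\langle v(\cE),v(\cQ)\rangle + \langle v(\cQ),v(\cQ)\rangle$ together with $\langle v(\cE),v(\cE)\rangle\ge -2$, $\langle v(\cQ),v(\cQ)\rangle\ge -2$ (Bogomolov again). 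Everything else is a transcription of the well-known characteristic-zero argument (e.g.\ as in Yoshioka's or O'Grady's work), now legitimate in characteristic $p\ge 3$ because the Bogomolov inequality \eqref{eq:Bogomolovinequality} and strong semistability are available by the preceding lemma and by Lemma~\ref{lem:ss=strongss}.
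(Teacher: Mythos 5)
Your overall route is essentially the paper's: reduce to the rank-one Jordan--H\"older factors $(1,\pm D,\ast)$ of a strictly semistable sheaf with Mukai vector $(2,0,-2)$, observe $D\cdot H=0$, bound $D^2$ from below by a Bogomolov-type inequality and from above by the Hodge index theorem, and conclude $D=0$ or $D^2=-2$, so that the hypothesis on $(-2)$-classes forces every factor to be $(1,0,-1)$. However, the numerical lemma you rely on --- the very step you flag as the main obstacle --- is stated with the wrong lower bound, and with that bound the argument does not close. You invoke $\langle w,w\rangle\ge -2$ for each factor. Writing $w=(1,D,s)$ and $2v_0-w=(1,-D,-2-s)$, these two inequalities only give $2D^2+4\ge -4$, i.e.\ $D^2\ge -4$; since $\NS(A)$ is even and the Hodge index theorem gives $D^2<0$ whenever $D\neq 0$ and $D\cdot H=0$, you are left with $D^2\in\{-2,-4\}$. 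The case $D^2=-4$, $s=-1$ (both factors of Mukai square $-2$) is not excluded by your inequalities, and it is not excluded by the hypothesis of the lemma either, which only forbids $(-2)$-classes orthogonal to $H$ (a point of a chamber avoids $(-4)$-walls too, but the first, stronger statement of the lemma does not). So as written you cannot conclude genericity from ``no $(-2)$-wall through $H$''.

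The repair is exactly what the paper uses: on an abelian surface a rank-one torsion-free sheaf is $I_Z\otimes L$, so its Mukai square equals $2c_2=2\operatorname{length}(Z)\ge 0$ --- there are no spherical classes, and Bogomolov here reads $w^2=\Delta(w)\ge 0$, not $\ge -2$. With $w^2\ge 0$ and $(2v_0-w)^2\ge 0$ one gets $2D^2+4\ge 0$, hence $D^2\ge -2$, giving the dichotomy $D=0$ or $D^2=-2$; then $D=0$ together with equality of reduced Hilbert polynomials forces $w=(1,0,-1)=v_0$. Two smaller points: the first bullet of Definition \ref{def:generic} for $v_0=(1,0,-1)$ is vacuous, since rank-one torsion-free sheaves are stable for every polarization, so no wall computation is needed there; and your assertion that a Jordan--H\"older factor of a sheaf with vector $2v_0$ ``automatically'' has vector $mv_0$ is circular --- a rank-one factor could a priori have $c_1\neq 0$, and ruling this out is precisely the content of the $D^2$ analysis above.
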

\begin{proof}
It is sufficient to show that the stable factors of an $H$-polystable sheaf $\cF$ with $v(\cF)=(2,0,-2)$ are with Mukai vector $(1,c_1, -1)$ such that $c_1=0$ or $c_1$ is a $(-2)$-class in $\NS(A)$.

The purity of $\cF$ implies that its stable factors are of rank 1. Thus $\cF$ can be written as
\[
\cF = \cE_1 \oplus \cE_2
\]
such that $v(\cE_1) =(1, c_1, s)$ and $v(\cE_2) = (1, c_1', s')$. They satisfy the following equations:
\begin{equation}\label{eq:10-1equation}
\begin{aligned}
c_1 + c_1' = 0, & \quad c_1 \cdot H = c_1' \cdot H =0, \\
& s +s' =2.
\end{aligned}
\end{equation}
From these equations, we can deduce that $c_1'^2=c_1^2 = c_2(\cE_1) + c_2(\cE_2) -2$. Thus $c_1'^2 = c_1^2 \geq -2$ as the Bogomolov's inequality \eqref{eq:Bogomolovinequality}  implies that $c_2(\cE_i) \geq 0$, . If $c_1^2 \geq 0$, then $c_1=0$ follows from the Hodge index theorem, as $c_1 \cdot H =0$. Hence the set of solutions of $c_1$ to the \eqref{eq:10-1equation} is contained in
\[
\left\{ c_1 \in \NS(A) \big| c_1 \cdot H=0, c_1^2=-2 \right\} \cup \{0\}.
\]
If $\left\{ c_1 \in \NS(A) \big| c_1 \cdot H=0, c_1^2=-2 \right\} = \emptyset$, then $H$ is $(2,0,-2)$-generic, which is our claim.
\end{proof}

\begin{remark}
For a general $v$ such that $\rk(v) >0$, it is still true that if an ample divisor $H$ lies in a $v$-chamber, then $H$ is $v$-generic.
\end{remark}

The following useful lemma shows that the $v$-genericity is preserved under lifting.
\begin{lemma}\label{lemma:genericitylifting}
Let $v_0=(r, c_1, s)$ be a Mukai vector. 
Let $(\cA, \cH, \widetilde{c}_1)$ be a lifting of the triple $(A,H,c_1)$ to a finite extension $W'$ of $W$. Let $K'$ be the fraction field of $W'$. Then $\cH_{K'}$ is $(2r,2\widetilde{c}_1,2s)$-generic if $H$ is $2v_0$-generic.
\end{lemma}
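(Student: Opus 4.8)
The plan is to argue by contraposition. Writing $v_0' := (r,\widetilde{c}_1,s)$, so that $(2r,2\widetilde{c}_1,2s)=2v_0'$, I assume that $\cH_{K'}$ is \emph{not} $2v_0'$-generic and manufacture $H$-semistable sheaves on $A$ that witness that $H$ is not $2v_0$-generic. Two tools are needed: the specialization map on Mukai lattices, and Langton's semistable-reduction theorem, valid over an arbitrary discrete valuation ring and in any characteristic (the foundational results in positive characteristic being Langer's, \cite{La04}).

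First I would record the behaviour of reduction. For the abelian scheme $\cA/W'$ with generic fibre $\cA_{K'}$ and special fibre $A$, the specialization homomorphism $\rho\colon\widetilde\NS(\cA_{K'})\to\widetilde\NS(A)$ is injective, respects the Mukai pairing, acts identically on the $\mathds 1$ and $\omega$ summands, and sends $\cH_{K'}\mapsto H$ and $\widetilde{c}_1\mapsto c_1$; hence $\rho(v_0')=v_0$, and $v_0'$ is primitive because $v_0$ is, so that Definition~\ref{def:generic} applies on $\cA_{K'}$ with $n=2$ (see \cite{FL18} for such specialization arguments). By Riemann--Roch on an abelian surface the reduced Hilbert polynomial of a sheaf with respect to $\cH_{K'}$ (resp. $H$) depends only on its Mukai vector and on the class $\cH_{K'}$ (resp. $H$); since $\rho$ respects these, a sheaf on $\cA_{K'}$ of Mukai vector $w'$ and one on $A$ of Mukai vector $\rho(w')$ have equal reduced Hilbert polynomials. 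Next comes the key geometric input: every $\cH_{K'}$-semistable coherent sheaf $\cG'$ on $\cA_{K'}$ of Mukai vector $w'$ extends to a $W'$-flat family on $\cA_{W'}$ whose special fibre $\cG_0$ is $H$-semistable with $v(\cG_0)=\rho(w')$. Indeed, one extends $\cG'$ to any coherent sheaf on $\cA_{W'}$, kills the $W'$-torsion to get flatness, and applies Langton's theorem to modify the family (keeping the generic fibre) so that the special fibre becomes semistable; the Mukai vector is locally constant in flat families, whence $v(\cG_0)=\rho(w')$ by the previous point. On an abelian surface there is no pathology in passing between semistability over $K'$ and over its special fibre, because semistable sheaves there are strongly (hence geometrically) semistable, cf.\ Lemma~\ref{lem:ss=strongss}.

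With these in hand the contrapositive splits into two cases coming from Definition~\ref{def:generic}. (a) If $\cH_{K'}$ is not $v_0'$-generic, there is a strictly $\cH_{K'}$-semistable $\cF'$ with $v(\cF')=v_0'$; take its Jordan--Hölder factors $\cG_1',\dots,\cG_k'$ (with $k\ge 2$), specialize each to an $H$-semistable $\cG_{0,i}$ as above, and note that $\bigoplus_i\cG_{0,i}$ is $H$-semistable, not stable, with Mukai vector $\rho(v_0')=v_0$ — so $H$ is not $v_0$-generic, a fortiori not $2v_0$-generic. (b) If instead some $\cH_{K'}$-semistable $\cF'$ with $v(\cF')=2v_0'$ has a Jordan--Hölder factor $\cE'$ with $v(\cE')\notin\{v_0',2v_0'\}$, then $v(\cE')$ is not a multiple of $v_0'$ (the cases $v_0',2v_0'$ are excluded, and any larger multiple has too large a Hilbert polynomial to occur as a factor of a sheaf of Mukai vector $2v_0'$); by injectivity of $\rho$ the specialization $\cG_{0,1}$ of $\cE'$ is $H$-semistable with Mukai vector $\rho(v(\cE'))$ not a multiple of $v_0$. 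Consequently not every Jordan--Hölder factor of $\cG_{0,1}$ is a multiple of $v_0$, so $\cG_{0,1}$ — and hence the $H$-semistable sheaf $\bigoplus_i\cG_{0,i}$ of Mukai vector $2v_0$ (whose Jordan--Hölder factors are those of the $\cG_{0,i}$) — has a Jordan--Hölder factor $\cE_0$ with $v(\cE_0)\notin\{v_0,2v_0\}$. Thus $H$ is not $2v_0$-generic, completing the contrapositive.

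The main obstacle is that reduction does not preserve stability: a stable sheaf on $\cA_{K'}$ can specialize to a merely semistable one, so one cannot simply reduce the offending sheaf $\cF'$. The remedy is to specialize the Jordan--Hölder factors separately — which do reduce to semistable sheaves by Langton's theorem — and recombine them by direct sum, using the injectivity of $\rho$ on Mukai lattices to prevent the forbidden class $v(\cE')$ from collapsing to a multiple of $v_0$ after reduction. The remaining points (Langton's theorem over the possibly ramified DVR $W'$, and the good behaviour of Gieseker semistability under the field extensions involved) are standard on abelian surfaces and should not cause difficulty.
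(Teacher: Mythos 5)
Your proposal is correct and follows essentially the same route as the paper: both arguments transfer the question to the special fiber via semistable reduction (Langton's theorem, equivalently the properness of Langer's relative moduli space) and then combine the injectivity of the specialization map on N\'eron--Severi/Mukai lattices with the $2v_0$-genericity of $H$ to constrain the Mukai vectors of stable factors on the generic fiber. Your contrapositive packaging, in which the Jordan--H\"older factors are specialized separately and recombined as a direct sum, is only a cosmetic variation on the paper's direct specialization of the polystable sheaf and its stable factors.
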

\begin{proof}
We can see $\cH_{K'}$ is $(r,\widetilde{c}_1,s)$-generic from the properness of the relative moduli space $\cM_{\cH}(\cA, (r,\widetilde{c}_1, s))$.

Since the semistability is stable under field extensions, it is sufficient to consider the $\cH_{\xbar{K}}$-semistable sheaves on the geometric generic fiber $\cA_{\xbar{\eta}}$. Suppose that $E_{\bar{K}}$ is $\cH \otimes \bar{K}$-polystable sheaves on the geometric generic fiber $\cA_{\bar{\eta}}$ such that $v(E_{\bar{K}})= 2v_0 \otimes \bar{K}$. If $F_{\bar{K}}$ a stable factor of $E_{\bar{K}}$, then we can find a stable factor $F_k$ of $E_k$ such that $v(F_k)= v(F_W) \otimes k$, since the specialization $\NS(\cA_{\bar{\eta}}) \to \NS(A)$ is injective, by \cite[Proposition 3.6]{Maulik2012} and the fact that N\'eron--Severi groups of abelian surfaces are torsion-free. Thus the stable factors of $E_{\bar{K}}$ are all with $v_0 \otimes \bar{K}$ by the assumption that $H$ is $2v_0$-generic.
\end{proof}

\subsection{O'Grady--Lehn--Sorger's construction: positive characteristics and lifting}
\label{sec:OGLS-Lifting}
Keep the notations as before. 
Given an element $v=(r,c_1,s)\in \widetilde \NS(A)$ such that $r\geq 0$, $(r,s)\neq (0,0)$ and $\left<v,v\right>\geq 0$,  together with a $v$-generic ample line bundle $H$, we consider the moduli space  $M_H(A,v)$ of Gieseker--Maruyama $H$-semistable sheaves on $S$ with Mukai vector $v$. 
Fix a point $[\cF_0]\in M_H(A, v)$, then the following morphism using the determinant and the second Chern class
\begin{align*}
     M_{H}(A,v) &\to \Pic^{0}(A)\times A\\
    \cF &\mapsto \big(c_1(\cF)-c_1(\cF_0), \operatorname{alb}(c_2(\cF)-c_2(\cF_0))\big),
\end{align*}
is the Albanese morphism for $M_{H}(A,v)$ and it is an isotrivial fibration. It is shown in \cite[Proposition 6.8]{FL18} that when $v$ is primitive with $p\nmid v^2$ and $H$ is $v$-generic, the Albanese fiber, denoted by $K_{H}(A,v)$, is a symplectic variety of generalized Kummer type of dimension $\left<v, v\right>-2$, provided that it is smooth.

If the Mukai vector $v\in \widetilde{\NS}(A)$ is not primitive, then the moduli space of $H$-semistable sheaves $M_H(A,v)$ is in general singular even for a $v$-generic polarization $H$. We will concentrate in the particularly interesting case where $v=2v_0$ for some primitive Mukai vector $v_0$ with $v_0^2=2$. Given a $v$-generic polarization $H$ of the abelian surface $A$, there is a stratification of $M_H(A,v)$ by the singular loci:
\begin{equation}
\label{eq:blow-up1}
\Omega_v \subset \Sigma_{v} \subset  M_H(A,v),
\end{equation}
where
\begin{itemize}

\item The closed variety $\Sigma_v$ is the singular locus of $M_H(A,v)$ consisting of strictly semistable sheaves (or equivalently, direct sums of two stable sheaves with Muakai vector $v_0$).
\item  The closed variety $\Omega_v$ is the singular locus of $\Sigma_v$ which  parameterizes sheaves of the form $F^{\oplus 2}$ with $F$ a stable sheaf with  Mukai vector $v_0$. 
\end{itemize}
It is clear that $\Sigma_v$ is isomorphic to the symmetric square of $\Omega_v$.

Over the complex numbers, O'Grady \cite{OG10, OG6} (for special $v_0$ and later Rapagnetta \cite{Rap07} for all $v_0$ with $v_0^2=2$) constructed a symplectic resolution $\widetilde{M}_H(A,v)$ of $M_H(A,v)$ in three steps:
\begin{enumerate1}
    \item Blow up $M_H(A,v)$ along $\Omega_v$ to get a space denoted $\overline{M}$. Let $\overline{\Omega}_v$ be the exceptional divisor and let $\overline{\Sigma}_v$ be the strict transform of $\Sigma_v$.
    \item Blow up $\overline{M}$ along $\overline{\Sigma}_v$, resulting a space denoted $\widehat{M}$. Let $\widehat{\Omega}_v$ be the strict transform of $\overline{\Omega}_v$, which is isomorphic to the Hilbert square of $\Omega_v$ (hence smooth).
    \item Perform an extremal contraction on $\widehat{M}$, which contracts $\widehat{\Omega}_v$ as a $\mathbb{P}^2$-bundle onto $\widetilde{\Omega}_v$. The resulting variety is denoted by $\widetilde{M}_H(A,v)$, which is shown to be a symplectic resolution of $M_H(A,v)$, and $\widetilde{\Omega}_v$ is a 3-dimensional quadric bundle over $\Omega_v$.
\end{enumerate1}

In \cite{LS06}, Lehn and Sorger proved that O'Grady's construction for $\widetilde{M}_H(A,v)$ is isomorphic to the blow-up of $M_H(A,v)$ along the singular locus $\Sigma_v$, and $\widehat{M}$ is obtained by a further blow-up along $\widetilde{\Omega}_v$, the preimage of $\Omega_v$. 

To summarize, we have the following diagram:
\begin{equation}\label{diag:OGLS}
\begin{tikzcd}
 & \widehat{M}=\bl_{\overline{\Sigma}_v}\!\bl_{\Omega_v}\!M_H(A,v) \ar[rd,"\text{blowing up $\overline{\Sigma}_v$}"] \ar[ld,"\text{blowing up $\widetilde{\Omega}_v$}","\text{contracting $\widehat{\Omega}_v$}"'] & \\
 \widetilde{M}_H(A,v)=\bl_{\Sigma_v}M_H(A,v) \ar[rd,"\text{crepant}","\text{blowing up $\Sigma_v$}"'] && \overline{M}=\bl_{\Omega_v}\!M_H(A,v) \ar[ld,"\text{blowing up $\Omega_v$}"] \\
 & M_H(A,v)&
\end{tikzcd}
\end{equation}
See \cite{kaledin06} for a thorough discussion for more general Mukai vectors.

Return to a field $k=\bar{k}$ with characteristic $p>0$. The main goal of this section is to show that everything works if $p\neq 2$, that the whole diagram \eqref{diag:OGLS} actually lifts to characteristic zero (Proposition \ref{prop:liftability}), and deduce information on the Albanese fiber, which will be the characteristic-$p$ version of O'Grady's variety. Before start, we remark that the stratification by singular loci \eqref{eq:blow-up1} clearly remains valid. We first adapt Lehn--Sorger's construction in odd characteristics.

\begin{theorem}
\label{thm:OGconstruction}
Let $k$ be an algebraically closed field such that $\cha(k)=p \neq 2$. Let $v=2v_0$ be a Mukai vector such that $v_0\in \widetilde{\NS}(A)$ is primitive and $v_0^2=2$. Then the blow-up of $M_H(A,v)$ along $\Sigma_v$ gives a symplectic resolution $\widetilde{M}_{H}(A,v)\rightarrow M_H(A,v)$. Moreover, the preimage of $\Omega_v$ is a 3-dimensional quadric bundle over $\Omega_v$.
\end{theorem}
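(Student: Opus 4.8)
The plan is to reduce both assertions to an \'etale-local statement on $M_H(A,v)$ which, for $p\neq 2$, reproduces exactly the complex-analytic picture of O'Grady \cite{OG6} and Lehn--Sorger \cite{LS06}, and then to invoke their computations. Being a crepant (resp.\ symplectic) resolution, and the shape of the fibre over a given point, are \'etale-local on the target, so it suffices to describe $M_H(A,v)$ in an \'etale neighbourhood of each closed point. Off $\Sigma_v$ the corresponding sheaf $E$ is stable, $\Ext^2(E,E)_0=0$, so $M_H(A,v)$ is already smooth there (cf.\ \cite{FL18}) and the blow-up is an isomorphism. On $\Sigma_v$ we have $E=E_1\oplus E_2$ with each $E_i$ stable of Mukai vector $v_0$, and a Luna-type \'etale slice theorem (for the reductive group acting on the relevant Quot scheme, or equivalently through the versal deformation of $E$) identifies $M_H(A,v)$ \'etale-locally with the GIT quotient $\Def(E)\mathbin{/\!/}\Aut(E)$.

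First I would handle a point of $\Sigma_v\setminus\Omega_v$, so $E_1\not\cong E_2$. Riemann--Roch and the Mukai pairing --- all characteristic-free --- give $\Ext^1(E_i,E_i)\cong k^4$ and $\Ext^1(E_1,E_2)\cong\Ext^1(E_2,E_1)^\vee\cong k^2$, while $\Aut(E)=\GG_m^2$ acts through its quotient $\GG_m$, and the leading term of the obstruction map is the moment map $(u,\xi)\mapsto\langle u,\xi\rangle$ for this action. Hence $M_H(A,v)$ is \'etale-locally
\[
\AA^8\times\bigl(\{(u,\xi)\in k^2\times(k^2)^\vee:\langle u,\xi\rangle=0\}\mathbin{/\!/}\GG_m\bigr),
\]
whose second factor is the surface $\{p^2+qr=0\}$, the $A_1$ rational double point, with $\Sigma_v$ corresponding to $\AA^8\times\{0\}$. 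Blowing up $\Sigma_v$ is thus locally the minimal resolution of this $A_1$-surface: smooth, crepant, with exceptional divisor a $\PP^1$-bundle over (the resolution of) $\Sigma_v$. Here $p\neq 2$ is what makes $\{p^2+qr=0\}$ the ordinary double point.

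Next, at a point of $\Omega_v$ we have $E=E_0^{\oplus 2}$, $\Aut(E)=\GL_2$, $\Ext^1(E,E)\cong\Ext^1(E_0,E_0)\otimes\mathfrak{gl}_2\cong k^4\otimes\mathfrak{gl}_2$, and $\Ext^2(E,E)_0\cong\mathfrak{sl}_2$; writing $x=\sum_i a_i\otimes X_i$, the obstruction map is the quadratic map $x\mapsto\sum_{i<j}\omega(a_i,a_j)\,[X_i,X_j]$, with $\omega$ the antisymmetric nondegenerate Mukai--Serre pairing on $\Ext^1(E_0,E_0)$. Choosing a symplectic basis we find that $M_H(A,v)$ is \'etale-locally $\AA^4\times Y$ with
\[
Y=\{(A,B,C,D)\in\mathfrak{sl}_2^{\oplus 4}\mid [A,B]+[C,D]=0\}\mathbin{/\!/}\operatorname{PGL}_2,
\]
the $6$-dimensional O'Grady singularity, $\Omega_v$ corresponding to $\AA^4\times\{0\}$; moreover $\Sigma_v$ corresponds to $\AA^4\times\operatorname{Sing}(Y)$. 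Since these equations and the $\operatorname{PGL}_2$-invariant theory of copies of $\mathfrak{sl}_2$ are classical over any field with $2$ invertible, Lehn--Sorger's analysis of $Y$ carries over verbatim: the blow-up of $Y$ along $\operatorname{Sing}(Y)$ is a smooth crepant resolution whose fibre over the origin $0\in Y$ is a smooth $3$-dimensional quadric $Q^3\subset\PP^4$ and whose exceptional divisor elsewhere is a $\PP^1$-bundle. Gluing these local pictures --- they exhibit $M_H(A,v)$ near $\Omega_v$ as a bundle of O'Grady singularities over the smooth variety $\Omega_v\cong M_H(A,v_0)$ --- we conclude that $\bl_{\Sigma_v}M_H(A,v)\to M_H(A,v)$ is a smooth crepant resolution whose fibre over every point of $\Omega_v$ is a smooth $3$-dimensional quadric; thus the preimage of $\Omega_v$ is a quadric-threefold bundle over $\Omega_v$. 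Finally, the Mukai $2$-form on the smooth locus of $M_H(A,v)$ (available in characteristic $p$ by \cite{FL18}) extends to a symplectic form on the resolution --- either because the resolution is crepant and $M_H(A,v)$ has symplectic singularities, or directly from the explicit symplectic forms on the local models $\AA^8\times T^*\PP^1$ and $\AA^4\times\widetilde Y$ --- so the resolution is symplectic.

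The main obstacle is to make rigorous the recurring claim that the \'etale-local structure of $M_H(A,v)$ is computed exactly as over $\CC$. Two points need care in characteristic $p$: first, the reduction to $\Def(E)\mathbin{/\!/}\Aut(E)$, since $\GL_N$ acting on the Quot scheme is reductive but not linearly reductive, so one must use the version of Luna's slice theorem valid for reductive groups (or argue directly with versal deformations); and second --- the real heart --- one must know that $\Def(E)$ is, \'etale-locally, genuinely cut out by the quadratic part of the obstruction, with no higher-order correction to the isomorphism type. Over $\CC$ this formality of $\operatorname{RHom}(E,E)$ is due to Kaledin--Lehn; in characteristic $p$ one can either re-run their argument after lifting $A$ and $E$ to $W(k)$, or exploit the $\Aut(E)$-equivariance of the obstruction map together with the positive weights of the action to absorb the higher-order terms. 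Once the local models are identified, the rest is the characteristic-free GIT and birational geometry of O'Grady and Lehn--Sorger.
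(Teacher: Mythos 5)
Your overall route is the same as the paper's: identify the formal/étale local models of $M_H(A,v)$ at points of $\Sigma_v\setminus\Omega_v$ (an $A_1$-singularity transverse to $\Sigma_v$) and at points of $\Omega_v$ (the $6$-dimensional O'Grady--Lehn--Sorger singularity times $\AA^4$), and then quote Lehn--Sorger's analysis of the local model. The two issues you flag at the end (the slice/GIT reduction and the formality of the Kuranishi map) are real, and the paper resolves them essentially as you suggest: it uses the explicit $\Aut(\cE)$-equivariant Kuranishi map of Lehn--Sorger's Appendix A, which works in any characteristic $p\neq 2$, together with the classical invariant theory of $\mathrm{SL}_2$ away from $2$ to identify the local ring at a point of $\Omega_v$ with $\widehat{\cO}_{\AA^4\times Z,0}$, where $Z=\{B\in\mathfrak{sp}_4\mid B^2=0\}$ (this is the same germ as your $Y$).

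The genuine gap is the sentence ``since these equations and the $\operatorname{PGL}_2$-invariant theory of copies of $\mathfrak{sl}_2$ are classical over any field with $2$ invertible, Lehn--Sorger's analysis of $Y$ carries over verbatim.'' It does not carry over verbatim, and the obstruction is not invariant-theoretic. The heart of Lehn--Sorger's argument is the identification of the blow-up of the local model along its singular locus with the known smooth resolution $\widetilde{Z}\subset Z\times\Gr^{\omega}(2,V)$ (equivalently, the smoothness, crepancy and quadric-fibre statement for $\bl_{\operatorname{Sing}(Y)}Y$); their proof of this requires the surjectivity of $L_0^n\twoheadrightarrow \pi_*(\,\cdot\,(2n))$, which rests on the cohomological vanishing $H^1\bigl(\Gr^{\omega}(2,V),\Sym^k(T_{\Gr^{\omega}(2,V)})(1)\bigr)=0$, a consequence of the Griffiths vanishing theorem. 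Kodaira-type vanishing is a characteristic-zero statement and can fail in characteristic $p$, so this step needs a separate argument: the paper supplies it by observing that $\Gr^{\omega}(2,V)$ is liftable and $3$-dimensional, so Deligne--Illusie gives the vanishing for $p\geq 3$, which is exactly where the hypothesis $p\neq 2$ enters a second time. Without this (or some replacement), your claim that the blow-up of the local model is smooth with $3$-dimensional quadric fibres over $\Omega_v$ is unjustified in positive characteristic, and with it the crepancy/symplectic-resolution conclusion.
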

\begin{proof} 
We follow the same idea as in \cite{LS06} and \cite{kaledin06}, paying special attention to the singular loci of $M_v\coloneqq M_H(A,v)$ in positive characteristic and the failure of the Kodaira vanishing theorem.
Denote the blow-up
\begin{equation}
\label{eq:Blow-up}
   \pi_v \colon  \bl_{\Sigma_v}M_H(A,v) \to M_H(A,v).
 \end{equation}

Firstly we shall give a local description around points in $\Sigma_v$. 
 For any $[\cE]=[\cF_1\oplus \cF_2] \in \Sigma_v$, we will naturally identify $\Ext^1(\cE,\cE)$ with its dual by Serre duality. Let $k[\Ext^1(\cE,\cE)]^\wedge$ be the completion of the polynomial ring $$k[\Ext^1(\cE,\cE)]\coloneqq \sym^\bullet(\ext^1(\cE,\cE)^*)$$ at the maximal ideal of its origin. The \emph{Kuranishi map} is  encoded in a formal function:
\[
\kappa \colon \Ext^2(\cE,\cE)_0^* \to k[\Ext^1(\cE,\cE)]^\wedge,
\]
and we can write  $\kappa$ in the form $\kappa = \sum_{i=2}^{\infty} \kappa_i$, satisfying the following properties:
\begin{itemize}
    \item $\kappa$ is $\Aut(\cE)$-equivariant,
    \item any element in the image of $\kappa$, viewed as a formal function on $\Ext^1(\cE,\cE)$, vanishes on $\Ext^1(\cF_i, \cF_i)$ for any $i$ (this relies on the assumption that $\cha(k) \neq 2$), 
    \item the quadratic term of $\kappa$, viewed as a map $\kappa_2\colon \Ext^1(\cE,\cE)\to \Ext^2(\cE,\cE)_0$, is given by $e\mapsto \frac{1}{2} e\smile e$, where $\smile$ is the Yoneda product,
    \item Let $J$ be the ideal generated by the image of
    $\kappa$. Then we have an isomorphism
    \begin{equation}\label{eq:Kuranishilocaliso}
        \widehat{\mathcal{O}}_{M_H(A,v),[\cE]} \cong (k[\Ext^1(\cE,\cE)]^\wedge /J)^{\Aut(\cE)}.
    \end{equation}
\end{itemize}
An explicit construction of $\kappa$ can be found in \cite[Appendix A]{LS06}, which works in any characteristic $p\neq 2$. 

(i) If $[\cE]=[\cF_1\oplus \cF_2] \in \Sigma_v\backslash \Omega_v$ with $[\cF_1] \neq [\cF_2]$, we can write $x \in \Ext^1(\cE,\cE)$ as $\sum_{i,j=1}^2 x_{ij}$, such that $x_{ij} \in \Ext^1(\cF_i,\cF_j)$. Since any element in the image of $\kappa$ vanishes on $\Ext^1(\cF_1,\cF_1) \oplus \Ext^1(\cF_2,\cF_2)$, we have
\[
\kappa_2(x) = x_{12} \smile x_{21}.
\]
 Let $\mathscr{Q}^{ss}$ be the parameter space of $H$-semistable pure sheaves on $A$ with the Mukai vector $v$. An analogous proof as in \cite[Proposition 1.4.1]{OG10} shows that,  the normal cone $C_{\Sigma_v \backslash \Omega_v} \mathscr{Q}^{ss}$  at the point $[\cE]$ is isomorphic to 
 $ \kappa_2^{-1}(0)$:
 \[
C \coloneqq (C_{\Sigma_v \backslash \Omega_v} \mathscr{Q}^{ss})_{[\cE]} \cong \left\{ (x_{12}, x_{21}) \in \Ext^1(\cF_1,\cF_2) \oplus \Ext^1(\cF_2,\cF_1)| x_{12} \smile x_{21}=0 \right\},
\]
where $C$ is of maximal rank. In addition, the action of $\Aut(\cE)=\mathbb{G}_m$ on $\kappa_2^{-1}(0)$ is given by
\[
\lambda(x_{12},x_{21})= (\lambda x_{12},\lambda^{-1} x_{21}).
\]
Thus, we can see that the point $[\cE] \in \Sigma_v \backslash \Omega_v \subset M_H(A,v)$ is of $A_1$-singularity along $\Sigma_v$.

(ii)  If $[\cE]=[\cF^{\oplus 2}] \in \Omega_v$,  a direct computation shows that \[\Aut(\cE) \cong \GL_2, ~\Ext^1(\cE,\cE)\cong \mathfrak{gl}_2 \otimes V~ \hbox{and} ~\Ext^2(\cE,\cE)_0\cong \mathfrak{sl}_2\] where $V\coloneqq \Ext^1(\cF,\cF)$ is a 4-dimensional $k$-vector space. Let $\{v_1, v_2,v_3,v_4 \}$ be a basis of $V$ such that the symplectic form $\omega$ on $V$ is of the form
\begin{equation}
\label{eq:sympform}
T=\begin{pmatrix}
0& 1& &  \\
-1& 0& &  \\
 &  & 0& 1 \\
 &  & -1& 0
\end{pmatrix}
\end{equation}
which gives an identification $\mathfrak{gl}_2 \otimes V = \mathfrak{gl}^{\oplus 4}_2$. We may denote by $A_i$  the $i$-th projection $\mathfrak{gl}^{\oplus 4}_2 \to \mathfrak{gl}_2 \otimes \{v_i\}$  under this identification.  The (formal) local model of $(M_H(A, v),\mathcal{E})$ is described as follows:  let $R= \sym^{\bullet}(\mathfrak{sp}_4)$ and $I_0$ the ideal of $R$ generated by the coefficients of matrices $B^2$ with $B \in \mathfrak{sp}_4$. Let $Z \subseteq \Sp_4$ be the subvariety $\left\{B \in \Sp_4\big| B^2=0 \right\}$. Following \cite{LS06},  there is an isomorphism $Z\cong\spec(R/I_0)$ and we have
\[
    \widehat{\mathcal{O}}_{M_H(A,v),[\cE]}\cong k[[x_1,x_2,x_3,x_4]] \otimes_k R^{\wedge}/{I_0}R^{\wedge} \cong \widehat{\cO}_{\mathbb{A}^4_k \times Z,0}.
\]
Here,  $x_i \coloneqq \tr(A_i)$. The construction of the first isomorphism fails when $p=2$ as it relies on the classical invariant theory of $\text{SL}_2$, but valid when $p\neq2$; see \cite[Corollary 4.1]{DKZ2002}.

The singular locus $Z_s \subseteq Z$ is the set
\[
Z_s = \left\{ B \in Z\big| \rk(B) \leq 1\right\},
\]
whose definition ideal in $R$ is generated by the coefficients in $\Lambda^2B$, denoted by $L_0$. Formally locally, $Z_s \backslash 0$ is isomorphic to $\Sigma_v \backslash \Omega_v$  near $\mathcal{E}$. We want to show that blowing up $Z_s$ gives a resolution of $Z$.

Let $\Gr^{\omega}(2,V) \subset \Gr(2,V)$ be the Grassmannian variety of 2-dimensional Lagrangian subspaces of $V$. Define $\widetilde{Z} \subset Z \times \Gr^{\omega}(2,V)$ as
\[
\widetilde{Z} = \left\{ (B,U) \big| B(U) =0 \right\}.
\]
  Then the projection  $\pi: \widetilde{Z}\to Z$ is a smooth resolution because the  other projection $\tilde{Z}\to \Gr^{\omega}(2,V)$ can be identified with the projection of cotangent bundle $T^*\!\Gr^{\omega}(2,V) \to \Gr^{\omega}(2,V)$, which is smooth. Moreover, we see that the fibers of $\pi$ are 3-dimensional quadrics.

  We claim that the projection $\pi$ is isomorphic the blow-up of $Z$ along the singular locus $Z_s$, i.e.\ 
\[
\widetilde{Z}= \proj\left(\bigoplus_{n \geq 0}\pi_*(p^*\cO_{\mathbb{P}(\wedge^2)) V^*)}(2n))\right) \cong \proj\left( \bigoplus_{n \geq 0} L_0^n \right).
\]
It is sufficient to show that there is a surjection
\[
L_0^n \twoheadrightarrow \pi_*(p^*\cO_{\mathbb{P}(\wedge^2)) V^*)}(2n))
\]
as $\widetilde{Z}$ is irreducible and $\dim \widetilde{Z} = \dim \proj\left( \bigoplus_{n \geq 0} L_0^n \right)$. Lehn and Sorger's proof in \cite[Theorem 2.1]{LS06} was for characteristic 0, while everything also works in positive characteristics except the vanishing condition: 
\[H^1\left(\Gr^{\omega}(2,V), \Sym^k(T_{\Gr^{\omega}(2,V)})(1)\right)=0 \quad \text{for } k \geq 0,\]
which is used in \cite[Lemma 2.2]{LS06} as a corollary of the Griffiths vanishing theorem\footnote{Griffiths vanishing theorem says that for any ample sheaf $E$ on a smooth projective variety $X$, for $i \geq 1$ and $k \geq 0$, $H^i(X,\omega_X \otimes \Sym^k(E) \otimes \det(E))=0$.}. However, as $\Gr^{\omega}(2,V)$ is liftable, the Griffiths vanishing theorem holds (as a special case of Kodaira Vanishing Theorem) for $\cha(k)=p \geq \dim \Gr^{\omega}(2,V)=3$ by Delgine--Illusie; see \cite{DI87}. Now we can conclude that the blow-up $\pi_{v}$ in \eqref{eq:Blow-up} is a symplectic resolution of $M_H(A,v)$.
\end{proof}

Our strategy to establish the diagram \eqref{diag:OGLS} in odd characteristics is to first lift it to characteristic zero. To this end, we need to use relative moduli spaces of sheaves. As a preliminary remark, we point out that there is no difference between semistablity and strong semistablity for sheaves in the sense of \cite{La04} on abelian surfaces:

\begin{lemma}\label{lem:ss=strongss}
Any torsion free $H$-semistable sheaf $\cF$ on $A$ over an  algebraically closed field $k$ is strongly semistable, i.e.\ all Frobenius pull-backs $(F^m)^*\cF$, $m \geq 0$, are $H$-semistable.
\end{lemma}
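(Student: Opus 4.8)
The plan is to run the classical ``canonical connection'' argument for Frobenius semistability, where the only genuinely geometric input is that the cotangent bundle $\Omega_A^1$ of an abelian surface is a \emph{free} $\cO_A$-module (on a general smooth surface semistability need not survive a Frobenius pull-back, so this triviality is really being used). First I would reduce to a single Frobenius pull-back: by induction on $m$ it suffices to prove that $F^*\cF$ is $H$-semistable whenever $\cF$ is torsion free and $H$-semistable. Throughout I would use that the absolute Frobenius satisfies $F^*\cL\cong\cL^{\otimes p}$ for every line bundle $\cL$, hence $c_1(F^*\cE)=p\,c_1(\cE)$ and $\mu_H(F^*\cE)=p\,\mu_H(\cE)$ for every coherent sheaf $\cE$; also $F$ is finite flat, so $F^*\cF$ is again torsion free.

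Assuming $F^*\cF$ were not $H$-semistable, I would take the first step $\cG\subsetneq F^*\cF$ of its Harder--Narasimhan filtration with respect to $H$; it is saturated, so $\cQ\coloneqq F^*\cF/\cG$ is torsion free and $\cG$ is $H$-semistable with $\mu_H(\cG)=\mu_{H,\max}(F^*\cF)>\mu_{H,\max}(\cQ)$. Now $F^*\cF$ carries its canonical integrable connection $\nabla\colon F^*\cF\to F^*\cF\otimes_{\cO_A}\Omega_A^1$ with vanishing $p$-curvature, and the second fundamental form
\[
\cG\hookrightarrow F^*\cF\xrightarrow{\ \nabla\ }F^*\cF\otimes\Omega_A^1\twoheadrightarrow \cQ\otimes\Omega_A^1
\]
is $\cO_A$-linear by the Leibniz rule. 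Since $\Omega_A^1\cong\cO_A^{\oplus 2}$, the target has maximal slope $\mu_{H,\max}(\cQ)<\mu_H(\cG)$, so the $H$-semistability of $\cG$ forces this map to vanish; thus $\cG$ is a $\nabla$-submodule of $(F^*\cF,\nabla)$. By Cartier descent, $\cG=F^*\cG_0$ for a subsheaf $\cG_0\subseteq\cF$, whence $\mu_H(\cG)=p\,\mu_H(\cG_0)$ and $\mu_H(F^*\cF)=p\,\mu_H(\cF)$. The destabilizing inequality $\mu_H(\cG)>\mu_H(F^*\cF)$ then gives $\mu_H(\cG_0)>\mu_H(\cF)$, contradicting the $H$-semistability of $\cF$; induction on $m$ completes the argument.

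I do not expect a serious obstacle: the statement is essentially a special case of Langer's theorem that semistable torsion free sheaves on a smooth projective variety with trivial --- or more generally numerically flat --- cotangent bundle are strongly semistable \cite{La04}, which one could simply invoke. The point deserving the most care is the Cartier-descent step, where the descended subsheaf a priori lives on the Frobenius twist $A^{(p)}$ and must be transported back to $A$ along the canonical isomorphism of schemes (harmless since $k=\bar k$); one also uses the elementary fact that a torsion free $H$-semistable sheaf of slope $\mu$ admits no nonzero morphism to a torsion free sheaf of maximal slope strictly less than $\mu$. Both are routine Harder--Narasimhan bookkeeping.
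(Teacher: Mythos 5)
Your argument is correct: the reduction to a single Frobenius pull-back, the vanishing of the second fundamental form of the maximal destabilizing subsheaf $\cG\subset F^*\cF$ (using $\Omega^1_A\cong\cO_A^{\oplus 2}$ and the semistability of $\cG$ against $\mu_{\max}(\cQ)<\mu_H(\cG)$), and the Cartier-descent step producing a destabilizing $\cG_0\subset\cF$ with $\mu_H(\cG)=p\,\mu_H(\cG_0)$ all go through, and you correctly flag the only delicate point (the descended subsheaf lives on $A^{(p)}$ and is transported back via the base-change isomorphism, harmless over $k=\bar k$). The paper, however, takes the shortcut you mention only in passing: it observes that triviality of $\Omega^1_{A/k}$ gives $L_{\max}(\Omega^1_{A/k})=\lim_m \mu_{\max}((F^m)^*\Omega^1_{A/k})/p^m=0$ and then simply invokes Langer's criterion \cite[Corollary 6.2]{La04}, which states that semistability implies strong semistability under this numerical hypothesis. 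So both proofs rest on the same geometric input (trivial cotangent bundle of an abelian surface); yours is self-contained and makes the mechanism (canonical connection with vanishing $p$-curvature plus Cartier descent) explicit, while the paper's two-line reduction delegates exactly that mechanism, in greater generality, to Langer's theorem. Either route is acceptable; yours is more elementary in its dependencies, the paper's is shorter and applies verbatim whenever $L_{\max}(\Omega^1)\le 0$ rather than only when $\Omega^1$ is literally free.
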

\begin{proof}
The sheaf of K\"ahler differential forms $\Omega_A^1$ is trivial. Thus we have
\[
L_{\max}(\Omega^1_{A/k}) \coloneqq \lim_{m\to \infty} \frac{\mu_{\max}(F^m)^*\Omega^1_{A/k}}{p^m} =0,
\]
where the $\mu_{\max}$ means taking the maximal slope in the Harder--Narashimhan filtration. We can conclude by invoking \cite[Corollary 6.2]{La04}.
\end{proof}

Let us start the lifting procedure:

\begin{proposition}[Liftability]\label{prop:liftability}
The diagram \eqref{diag:OGLS} is valid over any algebraically closed field $k$ of characteristic $p\neq 2$. Moreover, the variety $\widetilde{M}_H(A, v)$ is liftable to characteristic zero.
\end{proposition}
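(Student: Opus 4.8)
The plan is to construct the lift over a suitable mixed-characteristic base and transport the entire O'Grady--Lehn--Sorger diagram \eqref{diag:OGLS} along it. First I would choose a lifting of the triple $(A, H, c_1)$: since an abelian surface lifts together with any ample line bundle and any N\'eron--Severi class (using that $\NS$ of an abelian surface is torsion-free, Serre--Tate theory, and deformation of polarized abelian varieties), there is a finite extension $W'$ of $W$ and an abelian scheme $\cA \to \Spec W'$ equipped with a relatively ample $\cH$ and a class $\widetilde{c}_1$ reducing to $(A, H, c_1)$. By Lemma \ref{lemma:genericitylifting}, the generic fibre polarization $\cH_{K'}$ is again $v$-generic, so over the generic point we are in the classical complex-geometric situation after a further base change to $\CC$.

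Next I would form the relative moduli space $\cM_\cH(\cA, v) \to \Spec W'$ of Gieseker--Maruyama $\cH$-semistable sheaves with Mukai vector $v = 2v_0$; this exists as a projective $W'$-scheme (Langer's construction in mixed characteristic, using Lemma \ref{lem:ss=strongss} to guarantee boundedness and the Bogomolov inequality \eqref{eq:Bogomolovinequality} uniformly). The relative singular loci $\Sigma_v$ and $\Omega_v$ inside $\cM_\cH(\cA, v)$ are cut out fibrewise by the same incidence conditions (strictly semistable points, resp.\ points of the form $F^{\oplus 2}$), and because these descriptions are compatible with the Kuranishi-model analysis in the proof of Theorem \ref{thm:OGconstruction} — valid fibre-by-fibre once $p \neq 2$, i.e.\ in every residue characteristic $\neq 2$ — the loci $\Sigma_v$, $\Omega_v$ are flat over $\Spec W'$ with the special and generic fibres being exactly the $\Sigma_v$, $\Omega_v$ of the $k$-story and the $\CC$-story respectively. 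Then I would define $\widetilde{\cM} := \bl_{\Sigma_v} \cM_\cH(\cA, v)$, and likewise the intermediate blow-ups $\overline{\cM} = \bl_{\Omega_v}\cM_\cH(\cA, v)$ and $\widehat{\cM} = \bl_{\overline{\Sigma}_v}\overline{\cM}$, together with the extremal contraction realizing Lehn--Sorger's identification. Since blowing up commutes with flat (indeed arbitrary) base change along the centers just described, these relative constructions specialize to diagram \eqref{diag:OGLS} over $k$ and generize to the classical diagram over $\CC$.

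The key point that makes the relative picture a genuine \emph{lift} (rather than just a fibrewise statement) is smoothness of $\widetilde{\cM} \to \Spec W'$. Here I would argue as in Theorem \ref{thm:OGconstruction}: the local model of $(\cM_\cH(\cA, v), [\cE])$ along $\Sigma_v \setminus \Omega_v$ is (relatively) an $A_1$-singularity times a smooth factor, whose blow-up is relatively smooth; along $\Omega_v$ the formal local model is $\widehat{\AA}^4 \times \widehat{Z}$ with $Z = \{B \in \Sp_4 : B^2 = 0\}$, whose blow-up along its singular locus is the smooth resolution $\widetilde{Z} = T^*\Gr^\omega(2, V)$. The input needed — the surjection $L_0^n \twoheadrightarrow \pi_*(\cdots)$ on $\Gr^\omega(2,V)$ — rests on the vanishing $H^1(\Gr^\omega(2,V), \Sym^k T(1)) = 0$, which holds in characteristic $p \geq \dim \Gr^\omega(2,V) = 3$ by Deligne--Illusie (as used in the proof of Theorem \ref{thm:OGconstruction}) and holds in characteristic $0$ by Kodaira/Griffiths vanishing; since $\dim \Gr^\omega(2,V) = 3$ and we assume $p \neq 2$, this covers every residue characteristic occurring. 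Hence $\widetilde{\cM} \to \Spec W'$ is smooth and projective with special fibre $\widetilde{M}_H(A, v)$, which is the desired lift.

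The main obstacle I anticipate is verifying the \emph{flatness} of the relative singular strata $\Sigma_v$ and $\Omega_v$ over $\Spec W'$ — equivalently, that the loci of strictly semistable points and of points $F^{\oplus 2}$ behave uniformly in the family — and the relative version of the Lehn--Sorger local model computation. The latter really needs the classical $\mathrm{SL}_2$-invariant theory statement \cite[Corollary 4.1]{DKZ2002} to hold over $W'$, i.e.\ in all residue characteristics $\neq 2$ simultaneously; granting $p \neq 2$ this is fine, but one must be careful that the formal isomorphism $\widehat{\cO}_{\cM_\cH(\cA,v),[\cE]} \cong k[[x_1,\dots,x_4]] \widehat{\otimes} R^\wedge/I_0 R^\wedge$ can be upgraded to an isomorphism of \emph{$W'$-formal schemes}. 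Once this relative local structure is in hand, relative smoothness of $\widetilde{\cM}$ follows exactly as above, and the liftability of $\widetilde{M}_H(A,v)$ — along with the validity of the whole diagram \eqref{diag:OGLS} in characteristic $p \neq 2$, obtained by base change to the closed point — is immediate.
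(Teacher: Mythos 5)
Your skeleton (lift $(A,H,c_1)$ over a finite extension $W'$ of $W$, form Langer's relative moduli space, blow up over $W'$, read off special and generic fibers) starts in the same way as the paper, but two steps you rely on are not established, and one of them is stated incorrectly. First, you implicitly assume that the special fiber of $\cM_{\cH}(\cA,v)$ is $M_H(A,v)$ (and, downstream, that the relative strata restrict to $\Sigma_v$ and $\Omega_v$ over $k$). Since $v=2v_0$ is non-primitive, the relative moduli space only \emph{uniformly} corepresents its functor, so its formation is not known to commute with base change at strictly semistable points; the paper devotes the core of its proof exactly to this: the canonical proper morphism $\delta\colon \cM_{\cH}(\cA,v)\times_{W'}k\to M_H(A,v)$ is shown to be an isomorphism using that the stable locus universally corepresents (hence commutes with base change), properness, normality and Zariski's main theorem, plus an injectivity-on-closed-points argument on the strictly semistable locus that uses the $v$-genericity of $\cH$ and the primitive-vector isomorphism $\cM_{\cH}(\cA,v_0)\times_{W'}k\cong M_H(A,v_0)$. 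Second, your claim that ``blowing up commutes with flat (indeed arbitrary) base change along the centers just described'' is false as stated: restriction to the special fiber is a non-flat base change, and even flatness of the center over $W'$ only controls $I\otimes_{W'}k$, not $I^n\otimes_{W'}k\to I_{Z_k}^n$ for all $n$ (one needs something like normal flatness along the center). So the identification of the special fiber of $\bl_{\Sigma}\cM_{\cH}(\cA,v)$ with $\bl_{\Sigma_v}M_H(A,v)$ --- which is essentially the content of the liftability claim --- is left unproved, and the flatness of the relative strata, which you yourself flag as the ``main obstacle,'' is asserted rather than shown; carrying out your fallback (a relative Kuranishi/local-model analysis and invariant theory over $W'$) would be a substantial additional piece of work.

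The paper circumvents precisely these difficulties. After proving that $\delta$ is an isomorphism, it performs \emph{both} the O'Grady-style and the Lehn--Sorger-style sequences of blow-ups over $W'$, notes that the generic fibers are isomorphic as polarized varieties by Lehn--Sorger's characteristic-zero theorem, and then compares the \emph{special} fibers by Matsusaka--Mumford, using that they are smooth, projective and non-ruled (Theorem \ref{thm:OGconstruction} applied over $k$). This yields the validity of diagram \eqref{diag:OGLS} over $k$ and the liftability of $\widetilde{M}_H(A,v)$ without ever proving flatness of the strata or relative smoothness of the blow-up by hand. If you wish to keep your more direct route, you must supply the special-fiber identification of the relative moduli space and a genuine argument that the Rees algebra of the relative $\Sigma$ commutes with reduction modulo the maximal ideal $\mathfrak{m}_{W'}$; otherwise, the paper's Matsusaka--Mumford comparison is the efficient way to close the gap.
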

\begin{proof}
First, by \cite[Proposition 6.4]{FL18}, one can lift the triple $(A, H, c_1)$ to  $(\cA, \cH, \widetilde{c}_1)$ over some mixed characteristics discrete valuation ring $W'$, which is finite over $W$ with residue field $k$.

Second, we can use Langer's construction of the relative moduli spaces of (strongly) semistable sheaves over mixed characteristic bases \cite{La04a,La04}.  For the ease of notations, we will also denote $(r,\widetilde{c}_1,s)$ by $v$. Let $P$ be the following polynomial in $\ZZ[t]$: 
\[\frac{\cH^2}{2} r t^2 + (\widetilde{c}_1 \cdot \cH) t + s.
\]
Then we can take the relative moduli space $\cM_{\cH}(\cA,v)$ over $W'$ by Langer's result, which is the fiber of the determinant morphism $\det \colon \cM_{\cH}(\cA, P) \to \Pic(\cA)$ at the section $\widetilde{c}_1 \colon W' \to \Pic(\cA)$. It uniformly corepresents the functor of $H$-semistable sheaves on $\cA$ with Mukai vector $v$ and is projective over $W'$. It implies that there is a unique proper morphism
\[
\delta \colon \cM_{\cH}(\cA,v) \times_{W'} k \to M_H(A,v).
\]
It is sufficient to show that it is an isomorphism. There is an open subscheme of $\cM_{\cH}(\cA,v)^s$ which universally corepresents the moduli functor of $H$-stable sheaves with Mukai vector $v$ on $\cA$. In other words, the formation of $\cM_{\cH}(\cA,v)^s$ commutes with any base change. In particular, 
\begin{equation}\label{eq:deltaopenlocus}
\cM_{\cH}(\cA,v)^s \times_{W'} k \cong M_H(A,v)^s.
\end{equation}
Thus $\cM_{\cH}(\cA, v)^s$ is a lifting of the coarse moduli space $M_H(A,v)^s$. From it, we can deduce that the image of $\delta$ is dense in $M_H(A,v)$ since $M_H(A,v)^s$ is open and dense in $M_H(A,v)$. However, the image of $\delta$ is closed from its properness, which implies that $\delta$ is surjecitve, hence a birational morphism. As both sides of $\delta$ are reduced and normal, by Zariski main theorem, it remains to show that $\delta$ induces an injective map on the sets of closed points. As $k$ is algebraically closed, it suffices to check the injectivity on $k$-points. In fact, if $\cH$ is $v$-generic, which can be ensured by the genericity of $H$, then any strictly semistable point $[\cF] \in \cM_{\cH}(\cA,v)$ is of form $[\cE_1 \oplus \cE_2]$, where $\cE_i$ are $W'$-flat $\cH$-semistable sheaves on $\cA$ with Mukai vector $v_0$. If $[\cE_1 \oplus \cE_2]$ and $[\cE_1' \oplus \cE_2']$ have the same image under $\delta$, then the reductions
$\cE_{1,k} \oplus \cE_{2,k}$ and $\cE_{1,k}' \oplus \cE_{2,k}' $
are $S$-equivalent. We can assume that $\cE_{1,k} \cong \cE_{1,k}'$ and $\cE_{2,k} \cong \cE_{2,k}'$. However, similarly to \eqref{eq:deltaopenlocus}, we have an isomorphism $\cM_{\cH}(\cA,v_0) \times_{W'} k \cong M_H(A,v_0)
$, hence we can conclude that $\cE_1 \cong \cE_1'$ and $\cE_2 \cong \cE_2'$. This is exactly the claimed injectivity.

Third, starting from $\mathcal{M}_\mathcal{H}(\cA, v)$, on one hand, we can perform the two blow-ups of O'Grady as in the right half of the diagram \eqref{diag:OGLS}, over $W'$, to construct
$\widehat{\mathcal{M}}_1$. On the other hand, we can also perform Lehn--Sorger's blow-up, followed by a further blow-up along $\widetilde{\Omega}$, the preimage of the (lifting of) $\Omega_v$, as in the left half of the diagram \eqref{diag:OGLS}, always over $W'$, to get $\widehat{\mathcal{M}}_2$. By the result of Lehn--Sorger \cite{LS06} recalled before, the generic fibers of $\widehat{\mathcal{M}}_1$ and $\widehat{\mathcal{M}}_2$ are isomorphic as polarized varieties. When $p\neq 2$, since their special fibers are smooth, projective (Theorem \ref{thm:OGconstruction}) and clearly non-ruled, an application of Matsusaka--Mumford's theorem \cite{MatsusakaMumford} gives us a polarized isomorphism between these special fibers.
\end{proof}

The variety $\widetilde{M}_H(A, v)$ is a 10-dimensional variety admitting a symplectic form, but it is not simply connected. We are interested in its Albanese fiber. Let $K_H(A,v)$ be the Albanese fiber of 
\begin{equation}\label{albmap}
\alb \colon M_H(A,v)\rightarrow A\times \widehat{A}.
\end{equation}
The preimage $\widetilde{K}_H(A,v)$ of $K_H(A,v)$ along O'Grady's resolution $\pi_v: \widetilde{M}_H(A,v)\to M_H(A,v)$ will be called an \emph{OG6 variety}. We show in the following theorem that the OG6 variety is a smooth symplectic variety for $p\geq 3$, and we compute its second cohomology.

\begin{theorem}
\label{prop:cohomological}
With the same assumptions as in Theorem \ref{thm:OGconstruction},
\begin{enumeratei}
    \item  if $H$ is $v$-generic, then the \emph{OG6} variety $\widetilde{K}_H(A,v)$ is a symplectic resolution of $K_H(A,v)$. It is a 6-dimensional symplectic variety and is liftable to characteristic zero;
    \item there is an isomorphism  $v^\perp \otimes K \oplus K(-1) \rightarrow H^2_{\cris}(\widetilde{K}_H(A,v))_K$ of $F$-isocrystals. Moreover, if $H^2_{\cris}(\widetilde{K}_H(A,v))$ and $H^3_{\cris}(\widetilde{K}_H(A,v))$ are $p$-torsion-free, this induces an isomorphism of $F$-crystals between $v^{\bot}\otimes W \oplus W(-1)$ and $H^2_{\cris}(\widetilde{K}_H(A,v)/W)$.
\end{enumeratei}
\end{theorem}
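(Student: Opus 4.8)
\textbf{Part (i).} I would deduce everything by restricting the picture of Proposition~\ref{prop:liftability} to the Albanese fibre. Since $\alb\colon M_H(A,v)\to A\times\widehat{A}$ is an isotrivial fibration, so is its composition with O'Grady's resolution $\pi_v$, and Proposition~\ref{prop:liftability} lifts this fibration — together with its base — to a smooth projective isotrivial fibration over $W'$; the fibre of the lift over a $W'$-point is then a lift of $\widetilde{K}_H(A,v)$. Hence $\widetilde{K}_H(A,v)$ is smooth and projective over $k$, of dimension $\langle v,v\rangle-2=4v_0^2-2=6$, and liftable to characteristic zero. Étale-locally over $A\times\widehat{A}$ the fibration is a product $\widetilde{K}_H(A,v)\times(A\times\widehat{A})$ and the symplectic form on $\widetilde{M}_H(A,v)$ splits as a direct sum, restricting to a nowhere degenerate closed $2$-form on $\widetilde{K}_H(A,v)$; likewise the morphism $\widetilde{K}_H(A,v)\to K_H(A,v)$ is, étale-locally, the blow-up of Theorem~\ref{thm:OGconstruction}, hence a symplectic resolution, with $K_H(A,v)$ having the singularities described there. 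Finally $\pi_1^{\et}(\widetilde{K}_H(A,v))=0$: the prime-to-$p$ part agrees with that of the geometric generic fibre of the lift, an O'Grady sixfold in characteristic $0$, which is simply connected; the full statement follows as for generalized Kummer varieties in \cite[Proposition~6.8]{FL18}, using the isotrivial Albanese fibration of $\widetilde{M}_H(A,v)$ to handle the pro-$p$ part.

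\textbf{Part (ii): the two maps.} Here $v^\perp$ is understood inside the Mukai $F$-crystal $\widetilde{H}_{\cris}(A/W):=H^0_{\cris}(A/W)(-1)\oplus H^2_{\cris}(A/W)\oplus H^4_{\cris}(A/W)(1)$, on which $v$ is a Tate vector of weight $2$, so $v^\perp$ is a sub-$F$-isocrystal of rank $7$. Pulling back a quasi-universal sheaf from $A\times M_H(A,v)$ to $A\times\widetilde{K}_H(A,v)$ and forming its crystalline Chern character yields an algebraic correspondence, hence a \emph{Mukai morphism} of $F$-isocrystals $\theta\colon v^\perp\otimes K\to H^2_{\cris}(\widetilde{K}_H(A,v))_K$, independent of the choice of quasi-universal family and Frobenius-equivariant because cycle-induced; it is the verbatim analogue of the complex construction. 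The exceptional divisor of $\widetilde{K}_H(A,v)\to K_H(A,v)$ is algebraic, and its crystalline class spans a sub-$F$-isocrystal isomorphic to $K(-1)$, orthogonal to $\im(\theta)$.

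\textbf{Part (ii): the isomorphism.} I would then check that $\theta$ is an isometry from the Mukai pairing to the Beauville--Bogomolov form — this is transported from the classical complex computation of O'Grady, Rapagnetta, and Perego--Rapagnetta (\cite{OG6,Rap07,PR13}; see also \cite{MRS18}), in which $\theta\oplus[\text{exceptional}]$ is an isomorphism of Hodge structures $v^\perp\oplus\ZZ\delta\xrightarrow{\sim}H^2(\widetilde{K}_{H}(A,v)_{\CC},\ZZ)$ — via the lift over $W'$ of $\widetilde{K}_H(A,v)$, of the quasi-universal sheaf, and of the exceptional divisor, together with the Berthelot--Ogus comparison. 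Non-degeneracy of the Mukai pairing on $v^\perp$ forces $\theta$ injective; since $\im(\theta)\oplus K(-1)$ has rank $7+1=8=b_2(\widetilde{K}_H(A,v))$, the combined map $v^\perp\otimes K\oplus K(-1)\to H^2_{\cris}(\widetilde{K}_H(A,v))_K$ is the asserted isomorphism of $F$-isocrystals. For the integral refinement, assuming $H^2_{\cris}$ and $H^3_{\cris}$ are $p$-torsion-free, $H^2_{\cris}(\widetilde{K}_H(A,v)/W)$ is free of rank $8$, the Berthelot--Ogus comparison with the $W'$-lift is an integral isomorphism, the half-class $\tfrac12[\text{exceptional divisor}]$ is integral (using $p\neq2$), and the complex integral lattice identification then propagates to an isomorphism $v^\perp\otimes W\oplus W(-1)\xrightarrow{\sim}H^2_{\cris}(\widetilde{K}_H(A,v)/W)$ of $F$-crystals.

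\textbf{Main difficulty.} The crux is to verify that the Mukai correspondence and the exceptional class on $\widetilde{K}_H(A,v)$ specialise compatibly from the characteristic-zero lift — so that the complex computation of the lattice $H^2$ can be combined with the correct $F$-crystal structure — and, for the integral statement, to control torsion and the $2$-divisibility of the exceptional class. The vanishing of the pro-$p$ part of $\pi_1^{\et}$ in Part (i) is the other subtle point, since it does not follow from the prime-to-$p$ specialisation alone.
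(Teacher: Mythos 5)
Your part (i) is essentially the paper's argument: the paper passes between $\widetilde{M}_H(A,v)$ and $\widetilde{K}_H(A,v)\times A\times\widehat{A}$ via Yoshioka's Cartesian diagram, whose bottom map is multiplication by $\tfrac{v^2}{2}=4$ on $A\times\widehat{A}$, finite \'etale since $p\neq 2$, and obtains the lift by taking the Albanese fibre of the lifted $\widetilde{\cM}_{\cH}(\cA,v)$ from Proposition \ref{prop:liftability}. One simplification for you: no separate pro-$p$ argument is needed for $\pi_1^{\et}$, because the specialization map on the \emph{full} \'etale fundamental group is surjective for a smooth proper scheme over $W'$ \cite[Expos\'e X, Corollaire 2.3]{SGA1}, so simple connectedness of the characteristic-zero generic fibre already forces $\pi_1^{\et}(\widetilde{K}_H(A,v))=0$; your worry about the pro-$p$ part is moot.

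In part (ii) there is a genuine gap in your integral statement. Your refinement rests on the claim that ``the Berthelot--Ogus comparison with the $W'$-lift is an integral isomorphism.'' But the lift exists in general only over a finite, typically \emph{ramified}, extension $W'$ of $W$ (the paper's remark after the theorem stresses that one cannot lift to $W$ itself, e.g.\ when $A$ is superspecial), and over a ramified base the Berthelot--Ogus theorem \cite{BO78} only identifies $H^2_{\cris}(\widetilde{K}_H(A,v)/W)\otimes_W K'$ with de Rham cohomology of the generic fibre; it does not control the $W$-lattice, so the complex integral lattice computation cannot simply be ``propagated'' to an isomorphism over $W$. This is precisely why the paper argues differently: rationally, it takes Perego--Rapagnetta's isomorphism $\gamma_v$ on the geometric generic fibre, proves it is $G_{K'}$-equivariant (using that $\gamma_v^s$ is induced by a semi-universal sheaf on the stable locus and that $j^*$ is injective), and applies the crystalline comparison functor $(-\otimes B_{\cris})^{G_{K'}}$; integrally, it replaces Fontaine's functor by Kisin's Breuil--Kisin functor \cite{Kisin06} and invokes the integral comparison theorem of \cite{BMS18} under the stated $p$-torsion-freeness hypotheses. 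A secondary issue with your rational argument: a quasi-universal family exists only over the stable locus of the singular space $M_H(A,v)$, so a ``verbatim'' Mukai morphism defined on all of $H^2_{\cris}(\widetilde{K}_H(A,v))_K$ in characteristic $p$ is not automatic --- extending the correspondence over the exceptional locus is exactly the nontrivial content of \cite{PR13}, available only in characteristic zero; the paper sidesteps this by using \cite{PR13} solely on the generic fibre and descending via Galois equivariance. Your rational conclusion could plausibly be repaired along these lines, but the integral step as written does not go through.
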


\begin{proof}
For (i), we have the following Cartesian diagrams
\[
\begin{tikzcd}
\widetilde{K}_H(A,v) \times A \times \widehat{A}\ar[d,"f_v \times \id_{A \times \widehat{A}}"'] \ar[r]  & \widetilde{M}_H(A,v) \ar[d,"\pi_v"]\\
K_H(A,v) \times A \times \widehat{A} \ar[r] \ar[d] & M_H(A,v) \ar[d] \\
A\times \widehat{A} \ar[r, "\times \frac{v^2}{2}"] & A \times \widehat{A}\,,
\end{tikzcd}
\]
see \cite[(4.10)]{Yo2001} and \cite[Page 3]{Yoshioka99}. If $p \nmid \frac{v^2}{2}$, then the morphism $\times \frac{v^2}{2}$ is finite and \'etale. Thus the $k$-smoothness of $\widetilde{M}_H(A,v)$ and that of $\widetilde{K}_H(A,v)$ are equivalent if $p \nmid \frac{v^2}{2}$; similarly for the existence of symplectic forms (see \cite[Propositin 6.9]{FL18}). As $v^2=8$ and $\widetilde{M}_H(A,v)$ is smooth by Theorem \ref{thm:OGconstruction}, the OG6 variety $\widetilde{K}_H(A,v)$ is smooth under the assumption $\cha(k) \neq 2$. The morphism $f_v$ is a symplectic resolution by its construction. To lift $\widetilde{K}_{\cH}(\cA,v)$, let $\widetilde{\cM}_{\cH}(\cA,v)$ be a lifting of $\widetilde{M}_H(A,v)$ over $W'$ as in Proposition \ref{prop:liftability}. Then we can take its Albanese fiber to obtain a lifting $\widetilde{\cK}_{\cH}(\cA,v)$ for $\widetilde{K}_H(A,v)$. By our construction, $\widetilde{\cK}_{\cH}(\cA,v)$ is the preimage along the $\pi_v$ of $\cK_{\cH}(\cA,v)$. Finally, to show that the \'etale fundamental group of $\widetilde{K}_{H}(A, v)$ is trivial, we use the simple connectedness of the generic fiber (a result established in characteristic zero \cite{OG6}) and the surjectivity of the specialization map for \'etale fundamental groups \cite[Expos\'e X, Corollaire 2.3]{SGA1}.

For (ii), let $K'$ be the fraction field of the base ring $W'$. By the results from characteristic zero \cite{PR13}, there is an isomorphism of $\mathbb{Z}_p$-modules at the geometric generic fiber:

\begin{equation}\label{eq:gammav}
    \gamma_v \colon v^{\bot}\otimes \mathbb{Z}_p \oplus \mathbb{Z}_p\cdot \eta \xrightarrow{\sim} H^2_{p}(\widetilde{\cK}_{\cH}(\cA,v)_{K'}),
\end{equation}
where $v^\bot$ is the orthogonal complement of $v$ in the algebraic Mukai lattice $\widetilde{H}(A)$ with respect to the Mukai pairing, and $\eta$ is the class of the exceptional divisor of Lehn--Sorger's blow-up $\widetilde{\cK}_{\cH}(\cA,v)_{K'}\to \cK_{\cH}(\cA,v)_{K'}$. The morphism $\gamma_v$ fits into the following commutative diagram
\[
\begin{tikzcd}[column sep=tiny,row sep=small]
 & v^{\bot}\otimes \mathbb{Z}_p \ar[rd,"\gamma_v|_{v^\bot\otimes \mathbb{Z}_p}","\sim"'] \ar[ld,"\gamma_v^s"']&  \\
 H^2_{p}(\cK_{\cH}(\cA,v)_{K'}^s) & & H^2_{p}(\cK_{\cH}(\cA,v)_{K'}) \ar[ll,hook',"j^*"]
\end{tikzcd}
\]
Here the $\gamma_v^s$ is induced by some semi-universal sheaf $\cE$ on $\cA \times \cK_{\cH}(\cA,v)^s$ and $j^*$ is the pull-back along the open immersion $j\colon \cK_{\cH}(\cA,v)^s_{K'} \subset \cK_{\cH}(\cA,v)_{K'}$; see \cite[Theorem 1.7]{PR13}.

We can assume that the field $K'$ is contained in $\mathbb{C}_p$. Let $G_{K'}$ be the absolute Galois group of $K'$. Since $j^*$ and $\gamma_v^s$ are both $G_{K'}$-equivariant, so is $\gamma_v$. Taking Fontaine's functor, there are isomorphisms
\[
\left(v^{\bot} \otimes_{\mathbb{Q}_p} B_{\cris}\right)^{G_{K'}} \oplus K \cdot \eta\cong \left(H^2_{p}(\widetilde{\cK}_{\cH}(\cA,v)_{K'})_{\mathbb{Q}_p} \otimes B_{\cris} \right)^{G_{K'}} \cong H^2_{\cris}(\widetilde{K}_H(A,v))_K. 
\]
The first isomorphism follows from \eqref{eq:gammav} and the fact that $\eta$ is $G_{K'}$-invariant. The second isomorphism comes from the rational $p$-adic Hodge theory. Thus it is sufficient to prove that 
\[
\left(v^{\bot} \otimes_{\mathbb{Q}_p} B_{\cris}\right)^{G_{K'}} \cong v^{\bot} \otimes K.
\]
Since $\widetilde{H}(\cA_{K'})_{\Qp} \cong (v^{\bot} \otimes \mathbb{Q}_p) \oplus \mathbb{Q}_p \cdot v$ and $(\widetilde{H}(\cA_{K'})_{\Qp} \otimes_{\mathbb{Q}_p} B_{\cris})^{G_{K'}} \cong \widetilde{H}(A)_K$, we can see
\[
\left(v^{\bot} \otimes_{\mathbb{Q}_p} B_{\cris}\right)^{G_{K'}} \oplus K(-1) \cong \widetilde{H}(A)_K \cong v^{\bot} \otimes K \oplus K \cdot v.
\]
as $F$-isocrystals. Since there is an isomorphism $K \cdot v \cong K(-1)$, we can identify $\left(v^{\bot} \otimes_{\mathbb{Q}_p} B_{\cris}\right)^{G_{K'}}$ with $v^{\bot}\otimes K$. From this, we can conclude that
\[
    v^{\bot}\otimes K \oplus K\cdot \eta \cong H^2_{\cris}(\widetilde{K}_H(A,v))_K.
\]

Recall that in \cite{Kisin06}, Kisin constructed a functor $\BK(-)$ from the category of $\ZZ_p$-lattices in crystalline representations to the category of Breuil--Kisin modules, which satisfies
\[
    \BK(-) \otimes_{W\llbracket u \rrbracket}K \simeq (- \otimes_{\QQ_p} B_{\cris})^{G_{K'}}.
\]
We can replace the Fontaine's functor by the Kisin's functor $\BK(-)$ in our previous discussion, so that we will obtain an isomorphism of Breuil--Kisin modules \footnote{Here we also need to use the integral $p$-adic comparison for abelian surface $\cA$, whose $p$-torsion-freeness is clear. We also need the fact that $\BK(\ZZ_p(-1)) = W\llbracket u \rrbracket \lbrace -1 \rbrace$, which can be found in \cite[Corollary 4.33]{BMS18}. }

\begin{equation}
    \label{eqn:overWu}
    v^{\bot} \otimes_{\ZZ_p} W\llbracket u \rrbracket \oplus W\llbracket u \rrbracket \lbrace -1 \rbrace \cong \BK(H^2_p(\widetilde{\cK}_{\cH}(\cA,v)_{K'}).
\end{equation}
Under the assumption that $H^2_{\cris}(\widetilde{K}_H(A,v)/W)$ and $H^3_{\cris}(\widetilde{K}_H(A,v)/W)$ are $p$-torsion-free, the integral $p$-adic comparison theorem \cite[Theorem 1.1 (iii)]{BMS18} allows us to obtain the desired isomorphism of $F$-crystals by tensoring both sides of \eqref{eqn:overWu} with $W$.
\end{proof}

\begin{remark}
The above proof for (ii) is indirect. The reason is that in general we can not lift $K_H(A,v)$ to $W$ (but only to a  ring $W'$ which is finite over $W$), for example, when $A$ is superspecial and $H,v$ are arbitrary. In case such lifting exists, we can apply the Berthelot--Ogus's comparison theorem \cite[Theorem 2.4]{BO78} directly to compute the $F$-(iso)crystals.
\end{remark}
\begin{remark}

If characteristic $p \geq \dim(K_H(A,v))=6$, and there is a lifting of $K_H(A,v)$ to $W_2(k)$, then the required torsion-freeness follows from the $E_1$-degeneration of the Hodge-to-de Rham spectral sequence, which is known by Deligne--Illusie's criterion \cite[Th\'eor\`eme 2.1, Corollaire 2.5]{DI87}. In fact, we can also include $p=5$ from their criterion since we only require the torsion-freeness of the second and third cohomology groups. The argument is the same as  \cite[Proposition 2.6]{Ch16}. 
\end{remark}

\section{Birational equivalences among supersingular OG6 varieties}\label{chapter:BiratSSOG6}
From now on, we focus on the OG6 varieties that are $2^{nd}$-Artin supersingular. We first prove that this condition is equivalent to the supersingularity of the abelian surface, i.e.~ $(i) \Leftrightarrow (ii)$ in Theorem \ref{thm:main}.

\begin{lemma}
\label{thm:theorem1}
Let $A$ be an abelian surface, $v_0\in \widetilde{\NS}(A)$ a primitive Mukai vector with $v_0^2=2$.
Let $v=2v_0$ and $H$ be a $v$-generic polarization. 
Then the OG6 variety $X=\widetilde{K}_{H}(A,v)$ is $2^{nd}$-Artin supersingular if and only if the abelian surface $A$ is supersingular.
\end{lemma}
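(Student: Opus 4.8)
The plan is to deduce the equivalence directly from the computation of the second crystalline cohomology carried out in Theorem \ref{prop:cohomological}(ii). Recall that $2^{nd}$-Artin supersingularity of $X=\widetilde K_H(A,v)$ means that the $F$-isocrystal $H^2_{\cris}(X/W)_K$ is supersingular, i.e.\ all its Newton slopes equal $1$ (the cohomology sits in weight $2$). So the first step is to invoke the isomorphism of $F$-isocrystals
\[
H^2_{\cris}(X/W)_K \;\cong\; v^{\bot}\otimes K \,\oplus\, K(-1)
\]
from Theorem \ref{prop:cohomological}(ii), where $v^{\bot}$ is the orthogonal complement of $v$ in the algebraic Mukai lattice $\widetilde H(A)$, equipped with its natural $F$-isocrystal structure coming from $H^2_{\cris}(A/W)$. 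The summand $K(-1)$ is pure of slope $1$, so it contributes nothing to the question; hence $X$ is $2^{nd}$-Artin supersingular if and only if the $F$-isocrystal $v^{\bot}\otimes K$ is supersingular.

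The second step is to relate the $F$-isocrystal $v^{\bot}$ to the crystalline cohomology of $A$. Since $v=2v_0$ with $v_0^2=2\neq 0$, after tensoring with $\QQ_p$ we have an orthogonal decomposition $\widetilde H(A)_{\Qp}=v^{\bot}\otimes\QQ_p\,\oplus\,\QQ_p\cdot v$ of $F$-isocrystals, and $\QQ_p\cdot v\cong\QQ_p(-1)$. On the other hand, the algebraic Mukai lattice is $\widetilde\NS(A)=\ZZ\cdot\1\oplus\NS(A)\oplus\ZZ\cdot\omega$, so as an $F$-isocrystal $\widetilde H(A)_K\cong K(0)\oplus\bigl(\NS(A)\otimes K\bigr)\oplus K(-1)$, and the middle piece, being spanned by divisor classes, is itself pure of slope $1$ — it is a sub-$F$-isocrystal of $H^2_{\cris}(A/W)_K$ on which Frobenius acts with slope $1$. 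Therefore $v^{\bot}\otimes K$ is always a successive extension of Tate twists $K(0)$, $K(-1)$ together with $\NS(A)\otimes K$, hence is automatically a supersingular $F$-isocrystal — regardless of $A$! This shows $(i)\Rightarrow(ii)$ is trivially true, but also reveals that the interesting content of the Lemma must lie elsewhere: the displayed computation of $H^2_{\cris}$ only uses the \emph{algebraic} Mukai lattice, so $2^{nd}$-Artin supersingularity of $X$ is controlled by how much of $H^2_{\cris}(A/W)$ is algebraic. So I need to be more careful: the correct statement is that $b_2(X)=2+\rho(A)+\dim H^2(A)$ is forced, and the crystal $H^2_{\cris}(X/W)_K$ has rank equal to $b_2(X)$, which over $\CC$ equals $8$; so over $k$ of characteristic $p$, if $A$ is not supersingular then $b_2$ drops and the Betti-number hypothesis of the conjecture fails, whereas the $F$-isocrystal structure on the full $H^2_{\cris}(X/W)_K$ includes the transcendental part of $H^2_{\cris}(A/W)$ which is supersingular iff $A$ is.

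Let me restate the actual argument I would carry out. The key point is that the $F$-isocrystal $v^{\bot}\otimes K$ appearing in Theorem \ref{prop:cohomological}(ii) is \emph{not} the algebraic lattice tensored with $K$; rather, by the $p$-adic comparison argument in that proof, $v^{\bot}$ is identified inside $\bigl(\widetilde H(\cA_{\bar K})_{\Qp}\otimes B_{\cris}\bigr)^{G_{K'}}$, which recovers the full crystalline cohomology $H^2_{\cris}(A/W)_K\oplus K(-1)$ (Mukai lattice of the lifted abelian surface, including its transcendental part). Concretely $\widetilde H(\cA_{\bar K})_{\Qp}\cong H^0\oplus H^2_{\et}(\cA_{\bar K})\oplus H^4$, and passing through $B_{\cris}$ one gets $H^2_{\cris}(X/W)_K\cong K(0)\oplus H^2_{\cris}(A/W)_K\oplus K(-1)\oplus K(-1)$ minus the copy $K\cdot v\cong K(-1)$. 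So finally $H^2_{\cris}(X/W)_K$ is supersingular $\iff$ $H^2_{\cris}(A/W)_K$ is supersingular $\iff$ the abelian surface $A$ is supersingular, the last equivalence being the definition (for an abelian surface, supersingularity in the sense of the formal group / $F$-crystal is equivalent to $H^1_{\cris}(A/W)$, hence $H^2_{\cris}=\wedge^2 H^1$, having all slopes $1/2$, resp.\ $1$). I would also record that $H^1_{\ell}(X)$ and the odd cohomology do not interfere since we only need $H^2$.

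The step I expect to be the main obstacle is making precise the identification of the $F$-isocrystal $v^{\bot}\otimes K$ with (a twist of) $H^2_{\cris}(A/W)_K$ rather than with the purely algebraic sublattice — i.e.\ being careful that in Theorem \ref{prop:cohomological}(ii) the notation $v^{\bot}$ secretly carries the full (transcendental) $F$-isocrystal structure transported from the lifted abelian surface via $B_{\cris}$, and that the Mukai pairing decomposition $\widetilde H(\cA_{\bar K})_{\Qp}=v^{\bot}\oplus\QQ_p v$ is Galois-equivariant so it survives the functor. Once that bookkeeping is done, the equivalence is immediate: Newton slopes of $\wedge^2$ of a rank-$4$ $F$-crystal with slopes $\{1/2,1/2,1/2,1/2\}$ (supersingular case) are all $1$, while in the ordinary or finite-height case $H^1_{\cris}(A/W)_K$ has a slope $<1/2$, forcing a slope $<1$ in $\wedge^2$ and hence in $H^2_{\cris}(X/W)_K$.
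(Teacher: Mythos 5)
Your corrected argument is essentially the paper's own proof: the paper likewise invokes Theorem \ref{prop:cohomological}(ii), observes that the summand $K(-1)$ has slope $1$, views $v^{\bot}\otimes K$ as a corank-one sub-$F$-isocrystal of the crystalline Mukai lattice $\widetilde{H}(A)_K = H^2_{\cris}(A)_K\oplus K(-1)^{\oplus 2}$ with slope-$1$ complement $K\cdot v$, and so reduces to the supersingularity of $H^2_{\cris}(A)_K$, equivalently all slopes of $H^1_{\cris}(A)_K$ being $\tfrac12$. The only slip to fix is the Tate-twist bookkeeping in your display: in the weight-$2$ Mukai normalization the $H^0$ and $H^4$ pieces both contribute $K(-1)$ (not $K(0)$), and this slope-$1$ normalization is exactly what makes the direction ``$A$ supersingular $\Rightarrow X$ $2^{nd}$-Artin supersingular'' go through.
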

\begin{proof}
Since the supersingularity of an $F$-crystal is stable under isogenies, it is sufficient to show that the Newton polygon of $H^2_{\cris}(\widetilde{K}_H(A,v))_K$ is purely of slope 1 if and only if $A$ is supersingular. In Theorem \ref{prop:cohomological}, it is shown that
\[
H^2_\cris(\widetilde{K}_H(A,v))_K \cong v^{\bot} \otimes K \oplus K(-1).
\]
As $K(-1)$ is purely of slope 1, we see that $H^2(\widetilde{K}_H(A,v))_K$ is supersingular if and only if $v^{\bot} \otimes K \subset \widetilde{H}(A)_K:=H_\cris^2(A)_K\oplus K(-1)^{\oplus 2}$ is supersingular, which is also equivalent to the condition that $H^2_\cris(A)_K$ is a supersingular crystal. This implies that the Newton polygon of $H^1_\cris(A)_K$ has to be a straight line of slope $\frac{1}{2}$, which is equivalent to the classical definition of supersingularity for $A$.
\end{proof}

In view of Lemma \ref{thm:theorem1}, we assume in the sequel that $A$ is a supersingular abelian surface over $k$. When $v$ is a primitive Mukai vector and $H$ is a $v$-generic polarization, the supersingular symplectic variety $K_H(A,v)$ is studied in our previous work \cite{FL18}. Let us recall one of the main results in \cite{FL18}, which characterizes supersingular generalized Kummer type moduli spaces up to birational equivalence.  

\begin{definition}[{\cite[Definition 3.15]{FL18}}]\label{def:quasiliftable}
Two symplectic varieties $X$ and $X'$ are said \textit{liftably birationally equivalent}, if they are both liftable over some discrete valuation ring of characteristic zero with residue field $k$, such that  their geometric generic fibers are birationally equivalent. Two symplectic varieties $X$ and $X'$ are called \textit{quasi-liftably birationally equivalent} if they can be connected by a chain of quasi-liftably birational equivalences.
\end{definition}

\begin{theorem}[{\cite[Theorem 6.12]{FL18}}, \cite{LZ20}]\label{birKum}
Let $v=(r, c_{1}, s)\in \widetilde{\NS}(A)$ be a primitive Mukai vector with $r> 0$ such that $v$ is coprime to $p$. Then there is an autoequivalence  $\Phi_{A\to A}:\rD^b(A)\rightarrow \rD^b(A)$ with $\Phi_\ast(1,0,-n-1)=v$ and $n=\frac{v^{2}}{2}-1$ which induces a quasi-liftably birational equivalence 
\begin{equation}\label{birationalKum}
M_{H}(A,v)\dashrightarrow  \Pic^0(A)\times A^{[n+1]},
\end{equation}
 and by restricting to Albanese fibers, a quasi-liftably birational equivalence
\begin{equation}\label{birationalKumtype}
K_{H}(A,v)\dashrightarrow K_{n}(A),
\end{equation}
where $K_n(A)$ denotes the $2n$-dimensional generalized Kummer variety associated with $A$.
\end{theorem}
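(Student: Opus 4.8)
The plan is to deduce this from Fourier--Mukai theory on the abelian surface $A$, following Yoshioka's work over $\CC$ and adapting it to the mixed-characteristic setting of Proposition \ref{prop:liftability}; this is \cite[Theorem 6.12]{FL18} (see also \cite{LZ20}), and I sketch the strategy.

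\emph{Step 1: producing $\Phi$.} Every Fourier--Mukai autoequivalence $\Phi\colon \rD^b(A)\to\rD^b(A)$ induces an isometry $\Phi_\ast$ of the algebraic Mukai lattice $\widetilde{\NS}(A)=U\oplus\NS(A)$, where $U=\ZZ\mathds{1}\oplus\ZZ\omega$ is the hyperbolic plane spanned by $v(\cO_A)=\mathds{1}$ and $v(k(x))=\omega$. The first thing I would do is record the action of the ``elementary'' autoequivalences on $\widetilde{\NS}(A)$: the shift $[1]$ acts by $-\id$; a twist $-\otimes L$ by a line bundle $L$ acts by the Eichler transvection $E_{\omega,L}$, namely $(r,D,s)\mapsto(r,D+rL,s+D\cdot L+\tfrac{1}{2}rL^2)$; and the Fourier--Mukai transform $\mathrm{FM}_{\cP}$ with the Poincar\'e bundle on $A\times\widehat A$ interchanges $\mathds{1}$ and $\omega$ up to sign, so conjugating $-\otimes L$ by $\mathrm{FM}_{\cP}$ (composed with a principal polarization of $A$, or with a second Poincar\'e transform, so as to return to $\rD^b(A)$) realizes all transvections $E_{\mathds{1},L}$, $L\in\NS(A)$, as well. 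By Eichler's criterion, applicable since $\widetilde{\NS}(A)$ splits off the hyperbolic plane $U$, these transvections generate a subgroup of the orthogonal group that acts transitively on primitive vectors of a given Mukai square and a given divisibility. Now $(1,0,-n-1)$ has square $2(n+1)=v^2$ and divisibility $1$; and since $A$ is supersingular, the discriminant form of $\widetilde{\NS}(A)=U\oplus\NS(A)$ is $p$-elementary, so every primitive vector has divisibility $1$ or $p$, whence the hypothesis that $v$ be coprime to $p$ forces $\mathrm{div}(v)=1$. Therefore some $g$ in this group sends $(1,0,-n-1)$ to $v$, and $g=\Phi_\ast$ for a $\Phi$ built from shifts, line-bundle twists and Poincar\'e transforms. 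Crucially, every factor is defined over the lifting ring $W'$ of Proposition \ref{prop:liftability} (line bundles lift by \cite[Proposition 6.4]{FL18}, the Poincar\'e bundle exists in families), so $\Phi$ lifts to an autoequivalence of $\rD^b(\cA)$.

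\emph{Step 2: from $\Phi$ to a birational map.} For a polarization $H_0$ generic with respect to $(1,0,-n-1)$, the moduli space $M_{H_0}(A,(1,0,-n-1))$ consists of the sheaves $I_Z\otimes N$ with $Z\in A^{[n+1]}$ and $N\in\Pic^0(A)$, hence is isomorphic to $\Pic^0(A)\times A^{[n+1]}$, with Albanese fibre the generalized Kummer variety $K_n(A)$. Following Yoshioka \cite{Yo2001,Yoshioka99} and the general formalism relating Fourier--Mukai transforms and Gieseker stability, on a dense open subset of $M_{H_0}(A,(1,0,-n-1))$ the transform $\Phi$ carries a shift of such a sheaf to an $H$-stable sheaf with Mukai vector $v$, and conversely; this exhibits a common dense open subvariety of $M_{H_0}(A,(1,0,-n-1))$ and $M_H(A,v)$, hence the birational equivalence $M_H(A,v)\dashrightarrow\Pic^0(A)\times A^{[n+1]}$ of \eqref{birationalKum}. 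The same transform intertwines the two Albanese fibrations up to an isogeny of $A\times\widehat A$ (as in the Cartesian square in the proof of Theorem \ref{prop:cohomological}), so restricting to fibres yields \eqref{birationalKumtype}. To carry this out in characteristic $p$ I would not argue directly with stability in $\rD^b(A)$ but run everything over $W'$ using Langer's relative moduli spaces: the lifted $\Phi$, the lifted moduli spaces $\cM_{\cH}(\cA,-)$ and their Albanese fibrations all reduce correctly mod $p$, while over the generic fibre Yoshioka's complex results give the asserted birational equivalences, which then specialize.

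\emph{Step 3: the quasi-liftable conclusion.} Decomposing $\Phi$ into its elementary factors turns the birational equivalence of Step 2 into a chain of birational maps; each factor, being a shift, a line-bundle twist or a Poincar\'e transform, lifts to $W'$ together with the relevant moduli spaces and Albanese fibrations, so every step of the chain is a liftable birational equivalence in the sense of Definition \ref{def:quasiliftable}, and passing to Albanese fibres at each step gives \eqref{birationalKumtype}. I expect Step 2---pinning down exactly which sheaves remain Gieseker-stable under a Fourier--Mukai transform in positive characteristic---to be the main obstacle; the cleanest way around it is the lift-and-specialize argument indicated above, available precisely because $A$, the polarizations, the Poincar\'e bundle, and hence $\Phi$, are all defined over the mixed-characteristic base $W'$.
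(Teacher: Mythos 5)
Since the paper itself gives no proof of Theorem \ref{birKum} (it is imported verbatim from \cite[Theorem 6.12]{FL18} and \cite{LZ20}), your sketch has to be measured against the cited argument, whose broad strategy you do reproduce: use supersingularity to build an autoequivalence from line-bundle twists and Poincar\'e transforms carrying $(1,0,-n-1)$ to $v$, transfer stability on a dense open set, and keep track of liftability. Within that strategy, however, Step 1 has a real gap. Eichler's criterion does not apply to a lattice that merely splits off one hyperbolic plane; the standard statement requires $U\oplus U$, and it gives transitivity for the stable orthogonal group (equivalently, for the group generated by Eichler transvections attached to such a splitting). So you would have to verify that $\NS(A)$ of a supersingular abelian surface actually contains a hyperbolic plane (true, since it has rank $6$ and $p$-elementary discriminant, but this needs an argument), and, more delicately, that the isometry the criterion produces lies in the subgroup generated by the transvections you can genuinely realize, namely $E_{\omega,L}$ and $E_{\mathds 1,L}$ for $L\in\NS(A)$: transvections attached to isotropic classes inside $\NS(A)$ are not obviously induced by autoequivalences of $\rD^b(A)$. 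The cited proof sidesteps this by an explicit reduction of the Mukai vector (a Euclidean-algorithm-type manipulation with twists and the Poincar\'e transform) rather than an abstract transitivity statement, and your appeal to Eichler as written does not yet close the loop.

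The second, and more serious, gap is in Steps 2--3: the heart of the theorem is precisely that $\Phi$ identifies dense open subsets of the two moduli spaces, i.e.\ preserves Gieseker stability generically in characteristic $p$, and this is exactly what you defer. The phrase ``which then specialize'' is not automatic: a birational map between the geometric generic fibres of two projective schemes over $W'$ does not by itself induce a birational map between the special fibres. You need either a genuine characteristic-$p$ stability argument (this is what \cite{FL18} and \cite{LZ20} supply), or an appeal to the Matsusaka--Mumford theorem for smooth projective non-ruled special fibres to specialize the characteristic-zero birational equivalence --- a device the paper itself uses in the proof of Proposition \ref{prop:liftability}, but which your sketch never invokes. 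This matters because the statement requires an honest birational map over $k$ (it must be restricted to Albanese fibres to get \eqref{birationalKumtype}), not merely birational geometric generic fibres in the sense of Definition \ref{def:quasiliftable}; also note that compatibility of the chosen polarization $H$ with the one produced by the transform requires a wall-crossing argument (as in Proposition \ref{wall-crossing}) that your outline glosses over. So the architecture is right, but the two load-bearing steps are asserted rather than proved.
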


As shown in \cite{FL18}, the same result holds for moduli spaces of stable sheaves with primitive Mukai vectors on supersingular K3 surfaces.

Our goal is to generalize Theorem \ref{birKum} to OG6 varieties.  One useful tool is the following so-called wall-crossing principle, which holds over an arbitrary base field.
\begin{proposition}
\label{wall-crossing}
Let $A$ be an abelian surface defined over a field.
Let $m \geq 1$ be a positive integer which is coprime to $p$. Let $v_0$ be a primitive Mukai vector on $A$. Let $H$ be a $mv_0$-generic ample divisor in $\NS(A)$. 
Suppose 
\[
\Phi_{A \to A'} \colon D^b(A) \to D^b(A')
\]
is a Fourier--Mukai equivalence whose cohomological transform $\phi^{H}_{A \to A'}$ sends $v_0$ to $v_0'$. Let $H'$ be an $mv_0'$-generic polarization on $A'$. Then there is a birational equivalence:
\begin{equation}
\label{gamma}
\begin{tikzcd}[column sep=small]
M_{H}(A,m{v_0}) \arrow[r,dashed,"\sim"] & M_{H'} (A',m v_0').
\end{tikzcd}
\end{equation}
Moreover, it also induces birational map between the Albanese fibers
\begin{equation}\label{eq:gammabar}
    \overline{\gamma} \colon K_H(A,mv_0) \dashrightarrow K_{H'}(A',mv_0').
\end{equation}
\end{proposition}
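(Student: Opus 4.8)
The plan is to realize the birational equivalence \eqref{gamma} as an honest isomorphism on a suitable open dense locus, obtained by transporting sheaves through the Fourier--Mukai functor $\Phi = \Phi_{A\to A'}$, following Yoshioka's strategy in characteristic zero and upgrading the stability inputs to positive characteristic via Langer's theory. Write $v = mv_0$ and $v' = mv_0'$; since $\phi^H_{A\to A'}$ is an isometry of algebraic Mukai lattices sending $v_0 \mapsto v_0'$, it sends $v \mapsto v'$, so the two moduli spaces carry Mukai vectors matched by $\Phi$.

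First I would set up the open locus. Let $U \subseteq M_H(A,mv_0)$ be the set of $[\cF]$ such that (a) $\cF$ is $H$-stable, (b) $\cF$ satisfies the weak index theorem for $\Phi$, i.e.\ there is a single integer $i_0$ with $\Phi(\cF)\simeq \widehat{\cF}[-i_0]$ for a sheaf $\widehat{\cF}$ on $A'$, and (c) the transform $\widehat{\cF}$ is $H'$-stable with $v(\widehat{\cF})=mv_0'$. Conditions (a) and (b) are open, by openness of the stable locus and by cohomology-and-base-change (semicontinuity of the ranks of $R^i\Phi$ in a flat family); on the WIT locus the sheaves $\widehat{\cF}$ fit into a flat family over $U$, so (c) is open as well. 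On $U$ the universal transform defines a morphism $U\to M_{H'}(A',mv_0')$, and by the same construction applied to the inverse equivalence $\Phi^{-1}$ one obtains a mutually inverse morphism on the corresponding locus $U'$. Granting that $U$ and $U'$ are dense, this yields the birational equivalence \eqref{gamma}.

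The crux --- and the step I expect to be the main obstacle --- is the non-emptiness (hence, by irreducibility of the moduli space, the density) of $U$, together with the claim that a general stable sheaf is carried to an $H'$-semistable sheaf. Here the genericity hypotheses on $H$ and $H'$ are essential: they place the polarizations in $mv_0$- and $mv_0'$-chambers away from all walls, so that no proper sub- or quotient object of the transform can destabilize it. In characteristic zero this is Yoshioka's theorem; to run the argument in characteristic $p$, I would invoke Lemma \ref{lem:ss=strongss}, which shows every $H$-semistable sheaf on an abelian surface is strongly semistable, so that Langer's positive-characteristic moduli theory \cite{La04,La04a} together with the Bogomolov inequality \eqref{eq:Bogomolovinequality} supply the boundedness and openness needed to control the transform. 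Concretely, I would exhibit one explicit object in $U$ (for instance the transform of a suitable twist of a line bundle, or a sheaf built from the primitive class $v_0$, whose WIT index and $H'$-stability can be checked by hand) and then conclude density from irreducibility of $M_H(A,mv_0)$.

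Finally, for the ``moreover'' statement, I would use that the Fourier--Mukai equivalence induces, alongside $\phi^H_{A\to A'}$, an isomorphism of abelian varieties $g\colon A\times\widehat{A}\xrightarrow{\sim}A'\times\widehat{A'}$ governed by the Chern character of the kernel, and that the effect of $\Phi$ on $(\det,c_2)$ makes the Albanese morphisms \eqref{albmap} equivariant: the square with horizontal arrows $\gamma$ and $g$ and vertical arrows the two Albanese maps commutes on $U$. After normalizing the base points $[\cF_0]$ and $[\cF_0']$ so that $g$ carries the origin to the origin, the birational map $\gamma$ restricts to the fibers over $0$, yielding $\overline{\gamma}\colon K_H(A,mv_0)\dashrightarrow K_{H'}(A',mv_0')$; since the fibrations are isotrivial, any residual discrepancy in base points changes the target fiber only by an isomorphism.
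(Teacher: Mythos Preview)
Your overall architecture is the same as the paper's: transport sheaves through the Fourier--Mukai functor, show this is well-defined on an open dense set of stable points, and then descend to Albanese fibers via the compatibility of $\Phi$ with the Albanese maps. The paper, however, does not attempt to verify the ``crux'' step from scratch; it simply invokes \cite[Theorem 5.4.1]{MMY11}, which asserts that for a general $H$-stable sheaf $\cE$ there exists an autoequivalence $\Psi_{A'\to A'}$ of $D^b(A')$ such that $\Psi_{A'\to A'}\circ\Phi_{A\to A'}(\cE)$, or its dual, is an $H'$-stable sheaf (up to shift). The paper also notes that the resulting map extends across general polystable points $\cE^{\oplus m}$ because Fourier--Mukai transforms preserve direct sums, which is how the non-primitive Mukai vector $mv_0$ enters.

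The gap in your proposal is precisely the omission of this auxiliary $\Psi$. Your locus $U$ is defined using WIT and $H'$-stability for $\Phi$ itself, and you propose to show $U\neq\emptyset$ by ``exhibiting one explicit object''. For an arbitrary derived equivalence $\Phi$ this can fail outright: $\Phi$ may send every stable sheaf to a genuine two-term complex, or to a sheaf that is never $H'$-stable for the chosen $H'$, so that $U=\emptyset$. The content of the cited result is exactly that one can correct $\Phi$ by a further autoequivalence (possibly involving shifts, duals, line-bundle twists) so that the composite lands in sheaves and preserves stability generically. Once you allow yourself to replace $\Phi$ by $\Psi\circ\Phi$ and cite \cite{MMY11} for this, your argument goes through and coincides with the paper's; but without that modification the density of $U$ is not something you can establish by producing a single example. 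Your treatment of the Albanese-fiber restriction is fine and matches the paper's diagram of isotrivial fibrations.
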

\begin{proof}
In \cite[Theorem 5.4.1]{MMY11}, it has been proved that for a general $H$-stable sheaf $\mathcal{E} \in M_H(A,v_0)$, if $H'$ is generic with respect to $v$, then there is an autoequivalence $\Psi_{A' \to A'}$ on $D^b(A')$ such that $\Psi_{A' \to A'}\circ \Phi_{A \to A'} (\mathcal{E})$ or its dual is $H'$-stable sheaf of Mukai vector $v_0'$ up to shift. Whence, for a general polystable sheaf 
\[
 \mathcal{E}^{\oplus m} \in M_{H}(A,mv_0)\backslash M_H^{s}(A,mv_0), 
\]
the map
\begin{equation}
\begin{split}
    \gamma \colon [\mathcal{E}^{\oplus m}] &\mapsto \Psi_{A'\to A'} \circ \Phi_{A\to A'} [\mathcal{E}^{\oplus m}]\\ &\cong [\Psi_{A' \to A'} \circ \Phi_{A \to A'} \mathcal{E}]^{\oplus m}
\end{split}
\end{equation}
is well-defined as Fourier--Mukai transforms preserve colimits. It also means that we have birational map
\[
\gamma \colon M_{H}(A,mv_0) \dashrightarrow M_{H'}(A',mv_0')
\]
as they are irreducible moduli spaces and $H,H'$ are generic. We have the following commutative diagram of isotrivial fibrations
\[
\begin{tikzcd}[column sep=small]
M_H(A,m {v_0}) \ar[r,dashed, "\gamma"] \ar[d] & M_{H'}(A',mv_0') \ar[d] \\
\widehat{A} \times A \ar[r,"\cong"] & \widehat{A'} \times A'
\end{tikzcd}
\]
Then we can see the restriction of $\gamma$ on the open-subset of $M_H(A,m v_{0})$, which is identified with some open-subset of $M_{H'}(A',mv_0')$ by $\gamma$, induces an isomorphism between two open-subsets of $K_H(A,mv_{0})$ and $K_{H'}(A',mv_0')$.
\end{proof}
\begin{remark}
The wall-crossing principle here also includes the classical wall-crossing of Gieseker moduli spaces. It means that we can assume that $H=H'$ in \eqref{gamma} and \eqref{eq:gammabar} if $H$ is both $mv_0$-generic and $mv_0'$-generic.
\end{remark}

Now we study the birational equivalences between supersingular OG6 varieties. In analogy to the primitive case in \cite{FL18}, we will show that all the OG6 varieties associated to a fixed abelian surface are quasi-liftably birational (see \cite[Definition 3.15]{FL18} for the definition). It is not hard to prove the birational equivalence, the difficult part is the quasi-liftability. This is because the auto-equivalence of supersingular abelian surfaces is not necessarily liftable. We will carefully analysis how to decompose an auto-equivalence into a series of liftable ones. 

\begin{theorem}\label{thm:Birational}
Let $v_0$ be a primitive Mukai vector with $v_0^2=2$. Set $v=2v_0$. Let $H$ be a $v$-generic polarization.
For a general polarization $H'$, there is a quasi-liftably birational equivalence \[\widetilde{K}_H(A,2v_0) \dashrightarrow \widetilde{K}_{H'}(A,(2,0,-2)).\]
In other words, for a supersingular abelian surface, OG6 varieties with different Mukai vectors are all quasi-liftably birational.
\end{theorem}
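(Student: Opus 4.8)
The plan is to deduce the statement from the wall-crossing principle (Proposition~\ref{wall-crossing}), applied with $m=2$, to the same chain of \emph{liftable} Fourier--Mukai equivalences that already underlies Theorem~\ref{birKum}. Since $v_0^2=2$ we have $n=\frac{v_0^2}{2}-1=0$, so Theorem~\ref{birKum} applied to the primitive vector $v_0$ itself gives an autoequivalence of $\rD^b(A)$ sending $(1,0,-1)$ to $v_0$; moreover its construction in \cite{FL18} realizes the associated quasi-liftable birational equivalence through a chain of Fourier--Mukai equivalences $\Psi_i\colon\rD^b(A_i)\to\rD^b(A_{i+1})$ ($1\le i\le k$) between supersingular abelian surfaces, each \emph{liftable} to characteristic zero (only line-bundle twists, translations, shifts and Poincar\'e transforms occur, all of which lift after a finite extension of $W$), with $A_1=A_{k+1}=A$, and with cohomological transforms $\psi_i$ satisfying $\psi_i(v_0^{(i)})=v_0^{(i+1)}$, $v_0^{(1)}=v_0$, $v_0^{(k+1)}=(1,0,-1)$. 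It therefore suffices to prove the single-link statement: \emph{if $\Psi\colon\rD^b(A)\to\rD^b(A')$ is a liftable Fourier--Mukai equivalence between supersingular abelian surfaces whose cohomological transform carries a primitive $v_0$ with $v_0^2=2$ to $v_0'$, then for suitable generic polarizations $H,H'$ the OG6 varieties $\widetilde{K}_H(A,2v_0)$ and $\widetilde{K}_{H'}(A',2v_0')$ are liftably birationally equivalent in the sense of Definition~\ref{def:quasiliftable}}. Granting it, concatenating the $k$ links yields a quasi-liftable birational equivalence
\[
\widetilde{K}_H(A,2v_0)\dashrightarrow\widetilde{K}_{H_2}(A_2,2v_0^{(2)})\dashrightarrow\cdots\dashrightarrow\widetilde{K}_{H_{k+1}}(A,(2,0,-2)),
\]
and a final application of classical Gieseker wall-crossing (the remark after Proposition~\ref{wall-crossing}) replaces the polarization by the prescribed general $H'$.

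To prove the single-link statement, choose $2v_0$-generic and $2v_0'$-generic polarizations $H,H'$. Since $p\neq2$, Proposition~\ref{wall-crossing} with $m=2$ produces a birational equivalence $M_H(A,2v_0)\dashrightarrow M_{H'}(A',2v_0')$, an isomorphism of function fields; hence so is the induced birational equivalence of Albanese fibres $K_H(A,2v_0)\dashrightarrow K_{H'}(A',2v_0')$ and, by diagram~\eqref{diag:OGLS}, of their crepant resolutions $\widetilde{K}_H(A,2v_0)\dashrightarrow\widetilde{K}_{H'}(A',2v_0')$, so the birational equivalence itself is automatic and only its liftability requires work. Lift the triples $(A,H,c_1)$ and $(A',H',c_1')$ --- where $c_1,c_1'$ denote the $\NS$-components of $v_0,v_0'$ --- to a common finite extension $W'$ of $W$ by \cite[Proposition~6.4]{FL18}, and lift the kernel of $\Psi$ to a relative Fourier--Mukai equivalence over $\cA\times_{W'}\cA'$; this is possible precisely because $\Psi$ is liftable. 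Using Langer's relative moduli spaces and Proposition~\ref{prop:liftability}, form the relative O'Grady--Lehn--Sorger resolutions $\widetilde{\cM}_{\cH}(\cA,2v_0)$ and $\widetilde{\cM}_{\cH'}(\cA',2v_0')$ over $W'$, and take their Albanese fibres (as in the lifting argument in the proof of Theorem~\ref{prop:cohomological}(i)) to obtain smooth projective $W'$-models $\widetilde{\cK}_{\cH}(\cA,2v_0)$, $\widetilde{\cK}_{\cH'}(\cA',2v_0')$ whose special fibres are the two OG6 varieties. On the geometric generic fibre the polarizations stay generic by Lemma~\ref{lemma:genericitylifting}, so the characteristic-zero wall-crossing theory for moduli of sheaves with the non-primitive Mukai vector $2v_0$ on abelian surfaces, combined with O'Grady's resolution \cite{PR13}, shows the generic fibres of the two $W'$-models are birationally equivalent. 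This is exactly liftable birational equivalence, proving the single-link statement.

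The main obstacle is this liftability step: transporting the wall-crossing birational equivalence to characteristic zero compatibly with O'Grady's resolution. Two points require care. First, one must check that the relative construction of Proposition~\ref{prop:liftability} commutes both with taking Albanese fibres and with specialization to the closed fibre, so that the fibres of $\widetilde{\cK}_{\cH}(\cA,v)$ really are the OG6 varieties in question. Second, the wall-crossing map is a priori only defined fibre by fibre, and one must upgrade it to a genuine birational equivalence over the generic point of $W'$ inducing the original map on special fibres, which is where Lemma~\ref{lemma:genericitylifting} and the characteristic-zero results enter. By contrast, the purely birational statement is soft: Proposition~\ref{wall-crossing} applied to the autoequivalence of Theorem~\ref{birKum} delivers it at once, and on the lattice side $v_0$ and $(1,0,-1)$ lie in a single orbit of the isometry group of $\widetilde{\NS}(A)$ --- both are primitive of square $2$ and, $p$ being odd, of divisibility $1$ (a divisor $>1$ would have to equal $2$, forcing $v_0\in 2\widetilde{\NS}(A)$ since the discriminant group of $\NS(A)$ has no $2$-torsion), so Eichler's criterion applies because $\widetilde{\NS}(A)$ contains two orthogonal hyperbolic planes. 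The entire difficulty lies in the adjective \emph{quasi-liftable}: supersingular abelian surfaces admit non-liftable autoequivalences, which is why the connecting chain must be chosen to run through liftable links only.
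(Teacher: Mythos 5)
The central gap is your claim, asserted in a parenthesis, that the quasi-liftable birational equivalence of Theorem~\ref{birKum} is realized by a chain of Fourier--Mukai equivalences each of which is liftable and of the simple types you list (line-bundle twists, translations, shifts, Poincar\'e transforms). That claim is exactly the crux of the theorem and it is not established; indeed the paper's proof is organized around the opposite difficulty, namely that autoequivalences of supersingular abelian surfaces are in general \emph{not} liftable, and that one can only lift $A$ together with very few N\'eron--Severi classes at a time (\cite[Proposition 6.4]{FL18} lifts a triple $(A,H,c_1)$, i.e.\ two classes). The equivalence of Theorem~\ref{birKum} depends on an elliptic fibration of $A$ --- this is why the paper must lift the triple $(A,H''',E)$ with $E$ an elliptic fibre class --- so your single-link step, which asks for a common lift over $W'$ of $(A,H,c_1)$, $(A',H',c_1')$ \emph{and} the kernel of $\Psi$, requires lifting more classes simultaneously than is guaranteed, and you give no argument that such a lift exists compatibly with the chosen $2v_0$-generic polarizations. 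The paper resolves precisely this tension before invoking Theorem~\ref{birKum}: it perturbs $H$ inside its $v$-chamber into a $(2,0,-2)$-chamber (Lemma~\ref{prop:10-1wallstructure}), twists so that $c_1(v_0)$ itself becomes ample and $2v_0$-generic, and takes the polarization to be $c_1(v_0)$, so that only the two classes $c_1(v_0)$ and $E$ need to be lifted; it also verifies separately that the intermediate polarization wall-crossings are liftable. Your concatenation of ``liftable links'' silently needs all of this and supplies none of it.

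A second, independent gap is the rank-zero case. Theorem~\ref{birKum} is stated only for primitive vectors with $r>0$, so when $\rk(v_0)=0$ (e.g.\ $v_0=(0,\Theta,1)$, which is the case the paper actually uses later) your starting autoequivalence is simply not provided by that theorem, and the Eichler-criterion remark about orbits in $\widetilde{\NS}(A)$ produces an isometry, not a liftable chain of derived equivalences. The paper handles this case by explicit liftable moves: a line-bundle twist to reach $s=1$, the Poincar\'e transform attached to the principal polarization to pass to a rank-2 vector, and a further twist to reach $(2,0,-2)$. Finally, a small point in your favour: Definition~\ref{def:quasiliftable} only asks for lifts whose geometric generic fibres are birational, so the compatibility with the special-fibre map that you flag as a worry is not actually required; the genuine missing ingredient is the existence of the compatible lifts themselves, as explained above.
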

\begin{proof}
 Since $v_0$ is coprime to $p$, by Theorem \ref{birKum}, there is an autoequivalence $\Phi_{A\to A}^{\cE}:D^b(A)\rightarrow D^b(A)$ whose induced map $$\Phi^\ast:\widetilde{\NS}(A)\rightarrow \widetilde{\NS}(A)$$ sends $v_0$ to  $(1,0,-1)$. By Proposition \ref{wall-crossing}, there is a birational map
\begin{equation}\label{eq:birationalModuli}
     M_H(A,v)\dashrightarrow M_{H'}(A,(2,0,-2)).
\end{equation}
Arguing as in \cite[Proof of Theorem 6.12]{FL18}, we obtain a birational map  
\begin{equation}\label{eq:birationalK}
    \gamma \colon K_H(A,v) \dashrightarrow K_{H'}(A,(2,0,-2)).
\end{equation}
by taking the Albanese fiber. 

Proposition \ref{prop:liftability} and Lemma \ref{lemma:genericitylifting} allow us to construct the relative moduli space $\cM_{\cH}(\cA,v)$ whose generic fiber is the moduli space of semistable sheaves for some $v$-generic polarization in characteristic 0. Hence it is sufficient to choose the generic polarizations at the special fiber. We split the proof into two cases:

(1) If $\rk(v) >0$, then we can perturb the polarization $H$ within the $v$-chamber which it lies in, so that it falls in some $(2,0,-2)$-chamber. The new polarization will be denoted by $H''$. Proposition \ref{prop:10-1wallstructure} implies that $H''$ is both $v$-generic and $(2,0,-2)$-generic. Thus after doing that, we will obtain a commutative diagram of birational maps
\[
\begin{tikzcd}
K_H(A,v) \ar[d,equal] \ar[r,dashed,"\gamma"] & K_{H'}(A,(2,0,-2)) \ar[d,dashed,"\text{wall-crossing}"]\\
K_{H''}(A,v) \ar[r,dashed,"\gamma'"]& K_{H''}(A,(2,0,-2))
\end{tikzcd}
\]
By construction, the two spaces on the left are identical since the stability condition is unchanged for the Mukai vector $v$. The right vertical birational map is liftable: we first lift the triple $(A,H', H'')$ by \cite[Proposition 6.4]{FL18} to a mixed characteristics base $W'$, and then their generic fibers are birational as we can perform the wall-crossing principle for changing the lifted polarizations at the generic fibers. The birational map $\gamma'$ is also quasi-liftable since it can be constructed as follows: 

We can assume that $c_1(v_0)$ is ample as there are identifications
\[
M_{H''}(A,v) = M_{H''}(A, (2r, 2c_1 + 2n H'', 2s'))
\]
for all positive integers $n$ with suitable $s'$, since tensoring with $\mathcal{O}_A(H)$ will not change the moduli space. Moreover, we can assume that $c_1(v_0)$ is $2v_0$-generic. This is because the set of $v$-walls is locally-finite and depends only on the discriminant $\Delta$. Thus we write $v_0=(r, H''', s')$ with $H'''$ ample and $2v_0$-generic. Therefore, we
have a liftably birational map
\[
K_{H''}(A,v) \dashrightarrow K_{H'''}(A, v),
\]
by the wall-crossing of polarizations. Then we use again \cite[Proposition 6.4]{FL18} to lift the triple $(A, H''', E)$ for  $E$ the class of the fiber of some elliptic fibration of $A$, and apply the autoequivalence in Theorem \ref{birKum} to produce a quasi-liftably birational map
\[
K_{H'''}(A,v) \dashrightarrow K_{H'''}(A, (2,0,-2))
\]
as $H'''$ is $(2,0,-2)$-generic by Proposition \ref{prop:10-1wallstructure}.

(2) If $\rk(v_0)=0$, then $v_0= (0,c,s)$ such that $c$ is effective and $s \neq 0$. For a line bundle $\mathcal{L}$, the autoequivalence
\[
(-)\otimes \cL\colon D^b(A) \to D^b(A)
\]
induces a quasi-liftably birational map
\[
K_H(A,2(0,c,s)) \dashrightarrow K_H(A, 2(0,c,s+(c\cdot c_1(\mathcal{L}))).
\]
Since $(c\cdot c)=2$ and $\gcd(c \cdot H, s)=1$, we can find a divisor $m_1c+m_2H$, such that $s+(c \cdot (m_1c+m_2H))=1$.  Let $\mathcal{L}$ be a line bundle whose first Chern class is $m_1c+m_2H$, then tensoring $\mathcal{L}$ makes the degree-2 part of the Chern character equal to 1.

Therefore, we can assume that $s=1$. Then let us consider the derived equivalence $\Phi^{\cP}$ induced by the Poincar\'e bundle $\cP$ on $A \times A$ (recall that $A$ has a principal polarization). It produces a quasi-liftably birational map
\[
K_H(A,(0,2c,2)) \dashrightarrow K_H(A,(2,-2c,0)).
\]
Finally, by tensoring a line bundle whose first Chern class is $c$, we have an quasi-liftably birational map
\[
K_H(A,(2,-2c,0) ) \dashrightarrow K_H(A,(2,0,-2)). 
\]
By combining the above maps, we obtain the desired quasi-liftably birational map.
\end{proof}

\section{Unirationality and motive}\label{chapter:Proof}
In this section, we prove the main Theorem \ref{thm:main}. An important ingredient we use is the construction in \cite{MRS18} of a rational dominant degree-$2$ map from a 6-dimensional moduli space of sheaves on the Kummer K3 surface to the OG6 variety, for certain Mukai vector of \textit{torsion} sheaves. Throughout the section, $A$ is a \textit{supersingular} abelian surface defined over an algebraically closed field $k$ of characteristic $p>2$.

\subsection{Generic polarization}
Given a Mukai vector $v\in \widetilde{\NS}(A)$, we need to characterize  $v$-generic ample line bundles on $A$.  When $v$ is primitive and coprime to $p$, it is shown that every ample line bundle $H$ is $v$-generic (\cite[Lemma 6.10]{FL18}). However, this is no longer the case for non-primitive Mukai vectors. For instance,  if $A=E_1\times E_2$ is a product of two elliptic curves and $v=2(0,E_1+E_2,1)$, then by taking two stable sheaves $\cF_1$, $\cF_2$ such that $v(\cF_1)=(0,2E_1,1)$ and $v(\cF_2)=(0,2E_2,1)$, we can produce a strictly semistable sheaf $\cF_1\oplus \cF_2$ that does not satisfy the condition in Definition \ref{def:generic}. Thus the ample line bundle $\mathcal{O}_A(E_1+E_2)$ is not $v$-generic.  However, in the case where of rank-0 Mukai vector of divisibility 2, we have the following sufficient condition for the $v$-genericity.

\begin{proposition}
\label{v-generic}
Let $H$ be a principal polarization on $A$, which is not the sum of two elliptic curves. Let $v_0=(0,H ,s)$. Then $H$ is $2v_0$-generic.
\end{proposition}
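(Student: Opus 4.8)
The plan is to show that a principal polarization $H$ on $A$ which is not a product of two elliptic curves is $2v_0$-generic for $v_0=(0,H,s)$ by verifying both conditions of Definition \ref{def:generic}. The divisibility of $v=2v_0$ is $2$, so the only strictly $H$-semistable sheaves $\cF$ with $v(\cF)=v$ arise as extensions built from stable factors $\cE$ with $v(\cE)=v_0$; the dangerous case forbidden by Definition \ref{def:generic} is a stable factor with Mukai vector $m v_0$ where $m\notin\{1,2\}$, which for divisibility $2$ cannot occur, so in fact the real content is the \emph{first} bullet of Definition \ref{def:generic} applied to $v_0$ itself: we must show every $H$-semistable sheaf with Mukai vector $v_0=(0,H,s)$ is $H$-stable. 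Equivalently, I must rule out a strictly semistable decomposition, i.e.\ a subsheaf $\cF_1\subset\cF$ with the same reduced Hilbert polynomial, hence $v(\cF_1)=(0,D,t)$ with $0<D<H$ in the sense of effective curves, $D\cdot H$ and $t$ proportional to $H^2$ and $s$ appropriately. Since $v_0$ is not primitive only if $\gcd$-type conditions fail, and here $H$ is a principal polarization so $H^2=2$, a destabilizing sub-line-bundle-supported sheaf would force $D\cdot H=1$, i.e.\ $D$ is a curve meeting $H$ in a single point.

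The key geometric input I would use is the Hodge index theorem on the abelian surface together with the hypothesis that $H$ is irreducible as a principal polarization. Concretely, suppose $\cF$ is a pure $1$-dimensional $H$-semistable sheaf with $v(\cF)=(0,H,s)$ that is strictly semistable, with Jordan--H\"older factor $\cF_1$ of Mukai vector $(0,D,t)$ satisfying $\mu_H(\cF_1)=\mu_H(\cF)$, where the slope is $t/(D\cdot H)$. Reduced-Hilbert-polynomial equality gives $t/(D\cdot H)=s/(H\cdot H)=s/2$. The support of $\cF_1$ is a curve in the class $D$ with $0<D\cdot H<H\cdot H=2$, hence $D\cdot H=1$; then $s$ even would contradict $t/1=s/2$ being an integer unless $s$ is even, which we cannot assume, so already in many cases there is no such $D$. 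More robustly: $D$ is an effective divisor with $D\cdot H=1$, and $H$ ample of self-intersection $2$. By the Hodge index theorem $D^2\cdot H^2\le (D\cdot H)^2$, i.e.\ $2D^2\le 1$, so $D^2\le 0$, and $D^2$ is even (on an abelian surface every divisor class has even self-intersection), so $D^2\le 0$ with $D^2$ even, giving $D^2\in\{0,-2,\dots\}$. If $D^2=0$ then $D$ spans an elliptic pencil; since $D\cdot H=1$, the class $H-D$ also satisfies $(H-D)\cdot H=1$ and $(H-D)^2=H^2-2D\cdot H+D^2=2-2+0=0$, so $H=D+(H-D)$ would exhibit the principal polarization as a sum of two elliptic curves, contradicting the hypothesis. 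If $D^2<0$, adjunction/positivity on an abelian surface forbids an irreducible effective curve of negative self-intersection (there are no such curves), and a reducible one still contains a component meeting $H$ in positive degree with the same problem, run again — so I would argue by minimality that the case $D^2<0$ reduces to $D^2=0$ or to $D$ being a sum of curves each of the forbidden type.

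So the skeleton of the proof is: (1) reduce $2v_0$-genericity to showing every $H$-semistable $\cF$ with $v(\cF)=v_0$ is stable, using the divisibility-$2$ remark to dispose of the second bullet of Definition \ref{def:generic}; (2) assume a destabilizing subobject exists, extract the numerical data $v(\cF_1)=(0,D,t)$ with $D$ effective, $0<D\cdot H<2$, hence $D\cdot H=1$; (3) apply the Hodge index theorem and parity of self-intersections on an abelian surface to conclude $D^2\le 0$ and even; (4) in the equality case $D^2=0$, complement $D$ by $H-D$ and observe both are elliptic-curve classes summing to $H$, contradicting the hypothesis that $H$ is not a sum of two elliptic curves; (5) handle $D^2<0$ by decomposing into irreducible components and reducing to case (4). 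I expect the main obstacle to be step (2)–(3): making precise which numerical invariant plays the role of ``slope'' for pure $1$-dimensional sheaves (it is $\chi/(c_1\cdot H)$, the leading-coefficient-normalized constant term), and carefully ruling out the configurations with $D^2<0$ — i.e.\ showing that an effective curve class $D$ with $D\cdot H=1$ on an abelian surface with irreducible principal polarization $H$ must in fact have $D^2=0$ and generate an elliptic pencil. This last point is essentially the classical fact that a principally polarized abelian surface is either the Jacobian of a genus-$2$ curve (in which case $H$ is irreducible and carries no such splitting) or a product of elliptic curves; invoking that dichotomy cleanly is the crux.
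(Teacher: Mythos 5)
Your step (1) rests on a misreading of Definition \ref{def:generic}, and this is a genuine gap rather than a presentational issue. For a non-primitive vector $v=2v_0$ the second bullet of that definition is not automatic: it requires that \emph{every} stable factor of \emph{every} $H$-semistable sheaf with Mukai vector $2v_0$ has Mukai vector $mv_0$, and the dangerous case is not a factor with $m\notin\{1,2\}$ but a factor whose Mukai vector is not proportional to $v_0$ at all, i.e.\ a decomposition $v(\cE)=(0,c_1',s')$, $v(\cF/\cE)=(0,c_1'',s'')$ with $c_1'+c_1''=2H$ but $c_1'\neq H$. Your claim that $2v_0$-genericity reduces to $v_0$-genericity is contradicted by the example the paper gives immediately before the proposition: for $A=E_1\times E_2$ and $v_0=(0,E_1+E_2,1)$ the polarization $H=\mathcal{O}_A(E_1+E_2)$ \emph{is} $v_0$-generic (a destabilizer would need Mukai vector $(0,D,t)$ with $D\cdot H=1$ and $t=\tfrac12$, impossible), yet it is not $2v_0$-generic because $\cF_1\oplus\cF_2$ with $v(\cF_1)=(0,2E_1,1)$, $v(\cF_2)=(0,2E_2,1)$ is strictly semistable with factors not proportional to $v_0$. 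Your Hodge-index analysis is carried out only for subsheaves of a sheaf with total class $H$, where $D\cdot H=1$; it therefore never touches the actual content of the proposition, which concerns effective decompositions $c_1'+c_1''=2H$ with $c_1'\cdot H=2$ (e.g.\ $c_1'=2E_1$ in the example above).

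For comparison, the paper's proof attacks exactly this case: it takes a stable factor $\cE$ of an $H$-semistable $\cF$ with $v(\cF)=2v_0$ and pins down $c_1'=c_1''=H$ by combining (a) $\chi(\cE,\cE)=-(c_1')^2$ with the simplicity of $\cE$ on its irreducible support and the evenness of $\NS(A)$ to get $(c_1')^2\geq 0$; (b) $\Hom(\cE,\cF/\cE)=0$ for non-isomorphic stable factors with equal reduced Hilbert polynomial, giving $(c_1'\cdot c_1'')=\dim\Ext^1(\cE,\cF/\cE)\geq 0$ and hence $c_1'\cdot H=c_1''\cdot H=2$, $s'=s''=s$; and (c) the Hodge index theorem together with the hypothesis that $H$ is not a sum of two elliptic curves to exclude $(c_1')^2=0$ and force $c_1'=H$. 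None of this appears in your sketch. Your elliptic-curve splitting argument in steps (3)--(5) is the right kind of tool (and your treatment of the easy first bullet is essentially fine, noting that effective divisors on an abelian surface automatically have non-negative square, which disposes of your worried case $D^2<0$), but as written the proposal omits the core of the statement and so does not prove it.
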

\begin{proof}
Let $\cF\in M_H(A,2v_0)$. It is a purely one-dimensional sheaf with Hilbert polynomial
\[
\begin{split}
    P_H(\cF,n) = (2H\cdot H) n + 2 s
     = 4n+2s.
\end{split}
\]
Suppose $\cE \subset \cF$ is a stable factor of $\cF$. We want to show that $v(\cE)=v_0$. Denote  $v(\cE)=(0,c_1',s')$ and $v(\cF/\cE)=(0,c_1'',s'')$. Then we have 
$c_1'+c_1''=2H$ and $s'+s''=2s$. Since $\cE$ and $\cF/\cE$  have the same reduced Hilbert polynomials as that of $\cF$, namely $n+\frac{1}{2}s$, we have
\begin{equation}
    (c_1'\cdot H)s =2s', \quad (c_1''\cdot H)s=2s''.
\end{equation}

By definition, $\cE$ is also purely one-dimensional. We claim that $\cE$ is a stable sheaf such that $\supp_{\det}(\cE)$\footnote{This is the determinant support, see \cite[Section 3.1]{ASF2015} for the definition.} is an irreducible curve $C$ in $A$, and the  restriction of $\cE$ on $C$ is locally-free. First, to see the irreducibility of $C$, suppose otherwise that $C_1 + C_2$ is a decomposition of $C$ and $j_i \colon C_i \to C$ is the embedding respectively. Then we can see that the quotient sheaf $$\cE|_{C} \to j_{i,*}j^*_i\cE|_{C}$$ has the same reduced Hilbert polynomial as $\mathcal{E}$, a contradiction to the stability of $\cE|_C$. Hence $C$ is irreducible. As $\cE|_C$ is stable, hence simple, we have $\dim_k H^0(C, \cE|_{C}\otimes \cE|_{C}^{\vee})=1$, which yields that
\begin{equation}
    1- \dim H^1(C,\cE|_{C} \otimes \cE|_{C}^{\vee}) = \chi(\cE , \cE) = -(c_1')^2
\end{equation}
Thus $(c_1')^2 \geq -1$. As the Neron-Severi lattice of $A$ is even, $(c_1')^2\geq0$. 

On the other hand, we can assume that $\cE$ and $\cF/\cE$ not isomorphic (otherwise $v(\cE)=v(\cF/\cE)=v_0$ and we are done), but they are stable with the same reduced Hilbert polynomial, thus $\Hom(\cE,\cF/\cE)=0$. It implies that
\begin{equation}
    \dim\Ext^1(\cE,\cF/\cE) = -\chi(\cE,\cF/\cE) = (c_1' \cdot c_1'')
\end{equation}
Hence $(c_1'\cdot c_1'')\geq 0$. Then by the equations $c_1'+c_1''=2H$, we can conclude that $(c_1'\cdot H) \geq 0$ and $(c_1''\cdot H) \geq 0$.  Therefore we must have $s'=s''=s$, $(c_1'\cdot H)=(c_1''\cdot H)=2$ and $0 \leq (c_1')^2=(c_1'')^2\leq 2$.

If $(c_1')^2=0$, then $c_1'$ is the class of an elliptic curve of degree $2$ on $A$,  contradicting to the assumption. Thus $(c_1')^2>0$. It forces that $(c_1')^2=(c_1'')^2=2$. Now the Hodge index theorem applied to the divisors $H-c_1'$ and $H-c_1''$, gives us that $c_1'=c_1''=H$. In conclusion, all stable factors of $\cF$ must have Mukai vector $v_0$.
\end{proof}

\subsection{Mongardi--Rapagnetta--Sacc\`a double cover}
\begin{proposition}
\label{prop1}
Let $v_0=(0,c_1,s)$ be a primitive Mukai vector with $v_0^2=2$ and let $H$ be a $2v_0$-generic polarization on $A$. If $c_1$ is not the sum of the classes of two elliptic curves, then there is a dominant degree-2 rational map
\begin{equation}
M_{H'} (\mathrm{Km}(A), \tilde{v})\dashrightarrow K_{H}(A,2v_0)
\end{equation}
from the moduli space of $H'$-stable sheaves on the Kummer surface $\km(A)$ for some Mukai vector $\tilde{v}$ and some $\tilde{v}$-generic ample line bundle $H'$.
\end{proposition}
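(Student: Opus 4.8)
The plan is to transport to characteristic $p\geq 3$ the geometric construction of Mongardi--Rapagnetta--Sacc\`a \cite{MRS18}, checking that each step survives once $p\neq 2$. Since $p\neq 2$, the quotient $\bar A:=A/\langle\pm1\rangle$ has exactly $16$ ordinary double points (the images of the $2$-torsion subgroup $A[2]$), its minimal resolution $\rho\colon\km(A)\to\bar A$ is a K3 surface, and $\rho$ is an isomorphism away from the exceptional $(-2)$-curves $E_1,\dots,E_{16}$; moreover the $2$-isogeny quotient $q\colon A\to\bar A$ is finite of degree $2$, \'etale over $\bar A\setminus\operatorname{Sing}$. Because $v_0^2=2$ we have $c_1^2=2$, and the hypothesis that $c_1$ is not a sum of two elliptic-curve classes means $(A,c_1)$ is a principally polarized abelian surface with irreducible theta divisor; the system $|2c_1|$ then realizes $\bar A$ as the Kummer quartic in $\PP^3$ and $q$ as the associated double cover branched over its $16$ nodes (classical, and valid for $p\geq 3$). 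I would set $\tilde c\in\NS(\km(A))$ to be the pull-back of a hyperplane class under the birational morphism $\km(A)\to\bar A\hookrightarrow\PP^3$, so that $\tilde c^2=4$ and $\tilde c\cdot E_i=0$ for all $i$, take $\tilde v=(0,\tilde c,\tilde s)$ for the appropriate $\tilde s$ (as in \cite{MRS18}), which one can arrange to be primitive, and then choose a $\tilde v$-generic polarization $H'$ on $\km(A)$ (which exists and, by the facts recalled from \cite{FL18}, makes $M_{H'}(\km(A),\tilde v)$ a smooth projective symplectic variety of dimension $\tilde v^2+2=6$).

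Next, on the dense open locus $U\subseteq M_{H'}(\km(A),\tilde v)$ parametrizing line bundles on a smooth curve $\tilde C\in|\tilde c|$ disjoint from all $E_i$, I would define the map as follows. Such a $\tilde C$ is a smooth hyperplane section of the Kummer quartic missing its $16$ nodes, hence of genus $3$ (Bertini for the very ample hyperplane system together with the finite node count, which is fine in any characteristic). Its preimage $\hat C:=q^{-1}(\tilde C)\subset A$ is an $\iota$-invariant curve in $|2c_1|$ of genus $5$ on which $\iota$ acts freely, with $\pi\colon\hat C\to\tilde C$ a connected \'etale double cover. For $\cG=(\tilde C,L)\in U$ I would set $\Phi(\cG):=\iota_{\hat C,*}\,\pi^*L$, the push-forward to $A$ of the line bundle $\pi^*L$ on $\hat C$; then I would check that $\Phi(\cG)$ is $H$-stable (the push-forward of a line bundle from an integral curve is stable with respect to every polarization), that it has Mukai vector $2v_0=(0,2c_1,2s)$ (this pins down $\tilde s$ and $\deg L$), and that it is $\iota$-invariant, so that, after translating the Albanese morphism by a suitable $2$-torsion point, it lies in the Albanese fibre $K_H(A,2v_0)$ — indeed in its smooth stable locus, using the $v$-genericity of $H$ (Proposition~\ref{v-generic}). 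This produces a morphism $\Phi\colon U\to K_H(A,2v_0)$.

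To conclude I would compute the generic fibre. If $\Phi(\tilde C,L)=\Phi(\tilde C',L')$, then the two sheaves have the same support, so $\hat C=\hat C'$ and hence $\tilde C=\hat C/\iota=\tilde C'$, and $\pi^*L\cong\pi^*L'$ on $\hat C$; since $\ker\bigl(\pi^*\colon\Pic(\tilde C)\to\Pic(\hat C)\bigr)=\{\cO_{\tilde C},\eta\}$, where $\eta$ is the nontrivial $2$-torsion line bundle classifying the connected cover $\pi$, the fibre is exactly $\{(\tilde C,L),(\tilde C,L\otimes\eta)\}$. Thus $\Phi$ is generically finite of degree $2$; as both $M_{H'}(\km(A),\tilde v)$ and $K_H(A,2v_0)$ are irreducible of dimension $6$, the image of $\Phi$ is dense, and $\Phi$ extends to the desired dominant degree-$2$ rational map.

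I expect no conceptual obstacle here: everything in \cite{MRS18} is insensitive to the characteristic once $p\neq 2$. The one point that genuinely requires care is the connectedness (equivalently, irreducibility) of the generic double cover $\hat C\to\tilde C$, that is, the nonvanishing of its class in $H^1_{\et}(\tilde C,\ZZ/2)$; this is an \'etale-topological statement, independent of $p$, which is exactly what is verified over $\CC$ in \cite{MRS18}, and one transports it either by a direct \'etale computation or by spreading out from characteristic zero. The remaining verifications — smoothness and the genus of the generic hyperplane section of the Kummer quartic in characteristic $p\geq 3$, the stability and Mukai-vector bookkeeping for the push-forwards along $\hat C\to\tilde C\hookrightarrow\km(A)$ and $\hat C\hookrightarrow A$, and the existence of a $\tilde v$-generic polarization on the (possibly supersingular) K3 surface $\km(A)$, which one takes from \cite{FL18} — are routine.
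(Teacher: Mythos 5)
Your construction is essentially the paper's own proof (which itself follows the introduction of \cite{MRS18}): pull sheaves supported on hyperplane sections of the Kummer quartic back along the degree-$2$ quotient $A\to A/\pm$, check stability and the Mukai vector, show the image lies in the Albanese fibre, and conclude dominance from irreducibility and the dimension count. The only deviations are minor: you bypass the paper's final wall-crossing step (Proposition~\ref{wall-crossing}) by observing that push-forwards of line bundles from integral curves are stable for \emph{every} polarization, you place the image in a single Albanese fibre via $\iota$-invariance (plus, implicitly, connectedness of the open locus $U$) rather than via the vanishing of the Albanese invariant of $c_2=\varphi^*c_2(\mathcal{E})$, and you verify the degree-$2$ claim directly through $\ker\bigl(\pi^*\colon\Pic(\tilde C)\to\Pic(\hat C)\bigr)$, a point the paper simply attributes to \cite{MRS18}.
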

\begin{proof}
This is explained in the introduction of \cite{MRS18}. We include a proof for readers' convenience.
We may assume that $c_1=c_1(L)$ is the class of a principal polarization $L$ of $A$ and the general members in $|L|$ are smooth curves of genus 2. 
Let $$\varphi_{|2L|} \colon A \rightarrow  |2L|^\vee =\mathbb{P}^3_k$$ 
be the morphism associated with the linear system $|2L|$. It is well known that the image of $\varphi_{|2L|}$ in $\PP^3_k$ is a singular quartic surface, which is isomorphic to $\km_s(A)\coloneqq A/\pm $. The Kummer K3 surface $\km(A)$ is the minimal resolution of $\km_s(A)$.  We have the following cartesian diagram:
\begin{equation*}
    \begin{tikzcd}
    \bl_{A[2]}A  \ar[r] \ar[d]& \km{A} \ar[d, "\pi"]\\
    A \ar[r, "\varphi"]& \km_s{A}=A/\pm
    \end{tikzcd}
\end{equation*}

By Lemma \ref{v-generic}, $L$ is  $2v_0$-generic. We treat first the case $\widetilde{K}_L(A, 2v_0)$. Let $D$ be pull-back of a hyperplane section of $\km_s(A)$ along the resolution $\pi$. Suppose $H'$ is an ample divisor in $\NS(\km(A))$ which is generic with respect to the Mukai vector $\widetilde{v}=(0,D,s)$. Then we have the following diagram, where the vertical arrows are support morphisms:
\begin{equation}
\begin{tikzcd}
M_{H'}(\km(A),\widetilde{v})\ar[d]  & M_{L}(A,2v_0) \ar[d] \\
\vert D\vert \ar[r,"\cong"] & \vert 2L\vert
\end{tikzcd}
\end{equation}
Now, we claim that the map $[\mathcal{E}] \mapsto [\varphi^* \pi_{*} \mathcal{E}]$ defines a rational map
\begin{equation}\label{dominant-mod}
M_{H'}(\km(A),\widetilde{v}) \dashrightarrow M_{L}(A,2v_0).
\end{equation}In fact, we can choose a general point $[\mathcal{E}] \in M_{H'}(\km(A),\widetilde{v})$, which corresponds to a purely one-dimensional $H'$-stable sheaf $\mathcal{E}$ that supports on a smooth curve $C \in |D|$. Thus it can be viewed as a locally-free sheaf on $C \in |D| \cong |2L|$. Its pull-back $\varphi^* \mathcal{E}$ on $A$ is a $L$-stable sheaf  with Mukai vector $(0, 2c_1,2s)$ as the restriction of $\varphi$ on $C$ is a degree-2 \'etale cover.

Next, we need to show that the rational map \eqref{dominant-mod} factors through $K_L(A,v)$ and it is dominant. 
The generic image of the rational map \eqref{dominant-mod}  lies in the zero fiber $K_{L}(A,2v)$, because the Albanese invariant of $c_2(\pi^* \mathcal{E})= \pi^* c_2(\mathcal{E})$, which is pulled back from the quotient $A/\pm$, is zero. Hence \eqref{dominant-mod} gives a rational map
\begin{equation}
\label{eq:dominantmap}
M_{H'}(\km(A),\widetilde{v}) \dashrightarrow \widetilde{K}_{L}(A,2v_0),
\end{equation}
which is dominant, as both varieties are proper, irreducible, and 6-dimensional. 

It remains to prove the case of some other $2v_0$-generic polarization $H$. This follows from the birational equivalence between $M_H(A,2v_0)$ and $M_L(A,2v_0)$ proved in Proposition \ref{wall-crossing}.
\end{proof}

\subsection{Unirationality}\label{subsec:Unirat}
We prove the unirationality for supersingular OG6 varieties.

\begin{proof}[Proof of Theorem \ref{thm:main} $(i)\Rightarrow (iii)$]
By Theorem \ref{thm:Birational}, all OG6 varieties for different choices of Mukai vectors are birationally equivalent, hence it suffices to show the unirationality of $\widetilde{K}_H(A,\bar{v})$ for a specific Mukai vector $\bar{v}$. Here is the construction of such a $\bar{v}$. According to the a result of Ibukiyama, Katsura and Oort \cite{IKO86} (not explicitly stated, but see \cite[Theorem 5]{Ka14}),  every supersingular abelian surface $A$ contains a smooth curve $C$ of genus $2$ as Th\^eta divisor. We take $\bar{v}=(0, 2\Theta, 2)$, where $\Theta$ is the class of $C$. By Proposition \ref{prop1}, for a $\bar{v}$-generic polarization $H$, the resulting OG6 variety $\widetilde{K}_H(A,\bar{v})$ admits a dominant map from $M_{H'} (\mathrm{Km}(A),v')$. However, by \cite[Theorem 1.3]{FL18}, $M_{H'} (\mathrm{Km}(A),v')$ is birational to $\mathrm{Km}(A)^{[3]}$, the Hilbert cube of the Kummer K3 surface  $\mathrm{Km}(A)$. One can conclude by using the unirationality of $\mathrm{Km}(A)$ established by Shioda \cite{Shioda77}. Note that we did not use liftability results in this proof.
\end{proof}

\subsection{Motive of supersingular OG6 varieties}
The following theorem computes the rational Chow motive of supersingular OG6 varieties.
\begin{theorem}\label{thm:TateMotive}
Let $k$ be an algebraically closed field of characteristic $p\geq 3$. Let $A$ be a supersingular abelian surface over $k$ and $v_0$ a primitive Mukai vector with $v_0^2=2$. Set $v=2v_0$. For any $v_0$-generic polarization $H$, the motive of the supersingular OG6 variety $\widetilde{K}_H(A,v)$ is of Tate type:
\begin{equation}\label{eq:motiveOG6}
    \h(\widetilde{K}_H(A,v))=\bigoplus_{i=0}^{6} \1(-i)^{\oplus b_{2i}},
\end{equation}
where $b_{2i}$ is the $2i$-th Betti number of OG6 varieties, as is computed in \cite{MRS18}.
\end{theorem}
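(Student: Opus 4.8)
The plan is to deduce the Tate-type statement for $\widetilde{K}_H(A,v)$ by transporting the analogous result for moduli of sheaves on the Kummer K3 surface through the geometric correspondences established in the preceding sections, combined with the birational invariance of the Chow motive for symplectic varieties. First, by Theorem \ref{thm:Birational}, all OG6 varieties $\widetilde{K}_H(A,v)$ attached to a fixed supersingular abelian surface $A$ are (quasi-liftably) birationally equivalent, so by a Rost-type decomposition-of-the-diagonal argument (the rational Chow motive of a smooth projective variety with trivial canonical bundle is a birational invariant; this is where the symplectic hypothesis enters, via the fact that OG6 varieties have trivial canonical bundle and no global forms in odd degree) it suffices to compute $\h(\widetilde{K}_H(A,\bar v))$ for the single well-chosen Mukai vector $\bar v=(0,2\Theta,2)$ used in \S\ref{subsec:Unirat}.

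Next I would invoke Proposition \ref{prop1}: for $\bar v$ there is a dominant degree-$2$ rational map $M_{H'}(\km(A),\widetilde v)\dashrightarrow \widetilde{K}_H(A,\bar v)$. By \cite[Theorem 1.3]{FL18}, $M_{H'}(\km(A),\widetilde v)$ is birational to $\km(A)^{[3]}$, and since $\km(A)$ is a supersingular K3 surface, \cite[Theorem 1.3]{FL18} also gives that the rational Chow motive of $\km(A)^{[3]}$ — hence of $M_{H'}(\km(A),\widetilde v)$ — is of Tate type. So the source of the degree-$2$ map has Tate motive. The main step is then to push this through the degree-$2$ correspondence: a dominant generically finite morphism (or rational map resolved by a correspondence) $f\colon Y\dashrightarrow X$ of degree $d$ invertible in the coefficient ring $\QQ$ yields a projector $\frac{1}{d}\,{}^t\Gamma_f\circ\Gamma_f$ on $\h(Y)$ whose image is $\h(X)$; thus $\h(X)$ is a direct summand of $\h(Y)$, and a summand of a Tate motive is a Tate motive (using that the category of Chow motives over $k$ with $\QQ$-coefficients has the property that summands of $\bigoplus\1(-i)^{\oplus m_i}$ are again of this form — which follows from the fact that $\End(\1(-i))=\QQ$ and idempotents split). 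This forces $\h(\widetilde{K}_H(A,v))=\bigoplus_i \1(-i)^{\oplus n_i}$ for some $n_i$.

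Finally, to identify the multiplicities $n_i=b_{2i}$ I would take the $\ell$-adic (or crystalline) realization of \eqref{eq:motiveOG6}: since the motive is Tate, $H^{2i}_\ell(\widetilde{K}_H(A,v))_{\QQ_\ell}\cong \QQ_\ell(-i)^{\oplus n_i}$ and all odd cohomology vanishes, so $n_i$ equals the $2i$-th Betti number, which is the value computed in \cite{MRS18} (the Betti numbers being deformation invariants and the lifting of Proposition \ref{prop:liftability}, resp.\ Theorem \ref{prop:cohomological}(i), identifying them with those of the complex OG6 variety). I expect the main obstacle to be bookkeeping around the degree-$2$ rational map: one must resolve it to an honest correspondence between smooth projective models (using that both sides are smooth projective and invoking that a dominant rational map of smooth projective varieties induces a well-defined correspondence after taking closure of the graph), verify that the transpose-composition projector is genuinely idempotent, and check that the degree $2$ is invertible — automatic over $\QQ$ but worth stating. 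A secondary point requiring care is that the birational invariance of $\h(-)$ for the quasi-liftably birational OG6 varieties in Theorem \ref{thm:Birational} is being used in the form "birational symplectic varieties have isomorphic Chow motives," for which one should cite the decomposition-of-the-diagonal argument (as in \cite[\S4]{FL18}) rather than reprove it here.
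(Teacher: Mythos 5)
Your overall scaffolding (reduce to one Mukai vector via Theorem \ref{thm:Birational}, feed in \cite[Theorem 1.3]{FL18} for the moduli space on the Kummer K3, read off multiplicities from the realization) matches the paper, but the central step --- transporting Tate-ness across the degree-$2$ rational map of Proposition \ref{prop1} --- does not work as you set it up. For a dominant generically finite \emph{rational} map $f\colon Y\dashrightarrow X$, the closure $\Gamma$ of the graph is a correspondence, but $\Gamma\circ{}^{t}\Gamma$ is \emph{not} $d\,\Delta_X$ in general: the composition picks up excess components supported over the image of the indeterminacy locus, so $\tfrac{1}{d}\,\Gamma\circ{}^{t}\Gamma$ need not be idempotent and its image need not be $\h(X)$. (Already for a quadratic Cremona transformation of $\mathbb{P}^2$ the composite correspondence acts on $H^2$ by multiplication by $4$, not by $\deg f=1$.) The identity $f_*f^*=d\cdot\mathrm{id}$ that produces the splitting requires an honest surjective morphism from a smooth projective variety, i.e.\ a resolution of the indeterminacy with smooth total space, and in characteristic $p$ in dimension $6$ you cannot invoke resolution of singularities to get one. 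This is exactly why the paper lifts everything to a discrete valuation ring $W'$, uses the explicit characteristic-zero resolution of Mongardi--Rapagnetta--Sacc\`a (the space $\widehat{Y}$ obtained from $\underline{Y}=M_{H'}(\km(A),\widetilde v)$ by blowing up $256$ copies of $\mathbb{P}^3$ and then $\overline{\Delta}\simeq \bl_{A\times A^\vee[2]}(A\times A^\vee)/\pm$, which maps onto a blow-up of $\widetilde{K}$ by a ramified double cover), obtains the split injection $\h(\widetilde{\cK}_{\bar\eta})\hookrightarrow \h(\widehat{\cY}_{\bar\eta})$ from that surjective projective morphism on the geometric generic fiber, and then specializes the idempotent to the special fiber.

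A further consequence of this gap: even once you resolve, your argument exhibits $\h(\widetilde{K})$ only as a summand of $\h(\widehat{Y})$, not of $\h(\underline{Y})$, so you must also show that the blow-up centers have Tate motives. The $256$ copies of $\mathbb{P}^3$ are harmless, but $\overline{\Delta}$ contributes $\h(A\times A^\vee)^{\operatorname{inv}}$, which is of Tate type only because $A$ is supersingular (\cite[Theorem 2.9]{FL18}); this second, essential use of supersingularity is missing from your proposal. A secondary correction: the birational invariance you invoke is not a Rost-type decomposition-of-the-diagonal statement (that technique controls $\CH_0$-type invariants, not the full Chow motive, and full motives are not birational invariants of arbitrary $K$-trivial varieties); the paper needs the \emph{quasi-liftable} birationality of Theorem \ref{thm:Birational} precisely so that Rie{\ss}'s characteristic-zero theorem for birational hyper-K\"ahler varieties can be applied on generic fibers and then specialized, as in \cite[\S 3.6]{FL18}.
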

\begin{proof}
Thanks to \cite[\S 3.6]{FL18}, Chow motive is invariant under quasi-liftably birational equivalence for symplectic varieties. By Proposition \ref{thm:Birational}, we only need to treat the special case where $v_0=(0, \Theta, 1)$ with $\Theta=[C]$ where $C$ is the smooth genus 2 curve in $A$ as in \S \ref{subsec:Unirat}.
Following \cite{MRS18}, let $D\in \Pic(\mathrm{Km}(A))$ be the pull-back of the hyperplane section class from the singular Kummer surface $\mathrm{Km}_s(A)$, and let $\underline{Y}:=M_{H'} (\mathrm{Km}(A), (0, D, 1))$ be the 6-dimensional moduli space of stable sheaves on $\mathrm{Km}(A)$ with Mukai vector $(0, D, 1)$ with respect to some generic polarization $H'$. 

Now we use \cite[Propsition 6.4]{FL18} to lift the abelian variety $A$ together with the divisors $\Theta$ and $H'$, over a discrete valuation ring $W'$ which is finite over $W$ and with residue field $k$. Denote by $\mathcal{A}$ the lifted abelian scheme. Using the construction of Langer \cite{La04a, La04} of relative moduli spaces of sheaves on $\mathcal{A}$ and $\mathrm{Km}(\mathcal{A})$, together with our discussion in \S \ref{sec:OGLS-Lifting} on the resolution (Theorem \ref{prop:cohomological}), we get the lifting of $\widetilde{K}:=\widetilde{K}_H(A, (0, 2\Theta, 2))$, denoted by $\widetilde{\mathcal{K}}$, and the lifting of $M_{H'}(\mathrm{Km}(A), (0, D, 1))$, denoted by $\underline{\mathcal{Y}}$, as well as a rational map 
\[\xymatrix{\underline{\mathcal{Y}} \ar@{-->}[r]& \widetilde{\mathcal{K}},}\]
which is the lifting of the degree 2 rational map constructed in Proposition \ref{prop1}.

On the geometric generic fiber, according to the result in characteristic zero \cite[\S 5]{MRS18}, the rational map $\underline{\mathcal{Y}}_{\bar{\eta}}\dashrightarrow \widetilde{\mathcal{K}}_{\bar{\eta}}$ can be resolved as follows:
\begin{equation}\label{diag:Resolve}
    \xymatrix{
    \widehat{\mathcal{Y}}_{\bar\eta} \ar[d]_\beta \ar[dr]^{\widehat{\varepsilon}} &\\
    \overline{\mathcal{Y}}_{\bar\eta} \ar[d]_h & \widehat{\mathcal{K}}_{\bar\eta} \ar[d]^\rho\\
     \underline{\mathcal{Y}}_{\bar\eta} \ar@{-->}[r] & \widetilde{\mathcal{K}}_{\bar\eta}
    }
\end{equation}
where $h$ is the blow-up along the disjoint union of 256 copies of $\mathbb{P}^3_{\bar\eta}$, $\beta$ is the blow-up along $\overline{\Delta}\simeq \bl_{\mathcal{A}_{\bar\eta}\times \mathcal{A}_{\bar\eta}^\vee[2]}(\mathcal{A}_{\bar\eta}\times \mathcal{A}_{\bar\eta}^\vee)/{\pm 1}$, $\rho$ is the blow-up along the disjoint union of 256 three-dimensional quadrics, and $\widehat{\varepsilon}$ is a ramified double cover. As the notation indicates, all the blow-ups can be performed over $W'$ and the diagram \eqref{diag:Resolve} is indeed the base change to the geometric generic fiber of the analoguous diagram over $W'$, except that a priori $\widehat{\epsilon}$ is only a rational map over $W'$.

Using the surjectivity and the projectivity of the morphism $\widehat{\mathcal{Y}}_{\bar\eta}\to \widetilde{\mathcal{K}}_{\bar\eta}$, we obtain that there is a splitting injection from the Chow motive of $\widetilde{\mathcal{K}}_{\bar\eta}$ into the Chow motive of $\widehat{\mathcal{Y}}_{\bar\eta}$. By specialization, the Chow motive of the special fiber $\widetilde{K}$ is a direct summand of the Chow motive of the special fiber of $\widehat{\mathcal{Y}}$, denoted by $\widehat{Y}$. Therefore, to prove that $\h(\widetilde{K})$ is of Tate type, it suffices to show the same for $\h(\widehat{Y})$.

However, by construction, $\widehat{Y}$ is the obtained from $\underline{Y}$ by first blow-up along 256 copies of $\mathbb{P}_k^3$, resulting a space $\overline{Y}$, and then blow-up $\overline{Y}$ along a center $\overline{\Delta}\simeq \bl_{A\times A^\vee[2]}(A\times A^\vee)/{\pm 1}$.

By \cite[Theorem 1.3(iii)]{FL18}, the Chow motive of $\underline{Y}$ is of Tate type. Since the Chow motive of $\mathbb{P}^3$ is of Tate type, the blow-up formula implies that the Chow motive of $\overline{Y}$ is of Tate type. Observe that 
\[\h(\overline{\Delta})\simeq \h(A\times A^\vee)^{\operatorname{inv}}\oplus \left(\1(-1)\oplus \1(-2)\oplus \1(-3)\right)^{\oplus 256}.\]
However, by the computation of the motive of supersingular abelian varieties \cite[Theorem 2.9]{FL18}, $\h(A\times A^\vee)^{\operatorname{inv}}$ is of Tate type. Another application of the blow-up formula yields that the Chow motive of $\widehat{Y}$ is of Tate type. Hence $\h(\widetilde{K})$ is also of Tate type. The explicit formula \eqref{eq:motiveOG6} can be obtained by looking at the cohomological realization.
\end{proof}

\subsection{Proof of Theorem \ref{thm:main}}

$(i)\Leftrightarrow (ii)$ is Lemma \ref{thm:theorem1}.\\ $(i)\Rightarrow (iii)$ is proved in \S \ref{subsec:Unirat}.\\
$(iii)\Rightarrow (ii)$: unirationality clearly implies rational chain connectedness. We use \cite[Theorem 1.2]{GJ17} (see \cite[Theorem 3.10]{FL18}) to conclude the $2^{nd}$-Shioda supersingularity, which implies the $2^{nd}$-Artin supersingularity. \\
$(i)\Rightarrow (iv)$ is contained in Theorem \ref{thm:TateMotive}.\\
$(iv)\Rightarrow (v)$ easily follows by applying the cohomological realizations.\\
$(v) \Rightarrow (ii)$ is clear since $2^{nd}$-Shioda supersingularity implies $2^{nd}$-Artin supersingularity.

It only remains to verify that $\dim H^{2,0}(\widetilde{K}_H(A,v))=1$, under the assumption that $A$ is supersingular. Since $h^{2,0}$ is a birational invariant, Theorem \ref{thm:Birational} implies that it suffices to show the case when $v=2(0,\Theta,1)$ with $\Theta\in \NS(A)$ the class of the principal polarization. From the proof in Theorem \ref{thm:TateMotive}, we have the Mongardi--Rapagnetta--Sacc\`a double cover 
$$\psi:\underline{Y}\dashrightarrow \widetilde{K}_H(A,v),$$
where $\underline{Y}$ is a moduli space of stable sheaves on the Kummer surface $\operatorname{Km}(A)$.
It is easy to see that the pullback map $$\psi^\ast: H^0(\widetilde{K}_H(A,v), \Omega^2_{\widetilde{K}_H(A,v) })\rightarrow H^0(\underline{Y},\Omega_{\underline{Y}}^2)$$ is injective when $p>2$. Hence $h^{2,0}(\widetilde{K}_H(A,v))\leq h^{2,0}(\underline{Y})$. 
By \cite[Theorem 1.3 (ii)]{FL18}, $\underline{Y}$ is an irreducible symplectic variety (in particular, $h^{2,0}(\underline{Y})=1$). It follows that $h^{2,0}(\widetilde{K}_H(A,v))=1$ in this case.   

\bibliographystyle {abbrv}
\bibliography{main}

\begin{thebibliography}{10}

\bibitem{SGA1}
{\em Rev\^etements \'etales et groupe fondamental ({SGA} 1)}, volume~3 of {\em
  Documents Math\'ematiques (Paris) [Mathematical Documents (Paris)]}.
\newblock Soci\'et\'e Math\'ematique de France, Paris, 2003.
\newblock S\'eminaire de g\'eom\'etrie alg\'ebrique du Bois Marie 1960--61.
  [Algebraic Geometry Seminar of Bois Marie 1960-61], Directed by A.
  Grothendieck, With two papers by M. Raynaud, Updated and annotated reprint of
  the 1971 original [Lecture Notes in Math., 224, Springer, Berlin; MR0354651
  (50 \#7129)].

\bibitem{MR2115000}
Y.~Andr{\'e}.
\newblock {\em Une introduction aux motifs (motifs purs, motifs mixtes,
  p{\'e}riodes)}, volume~17 of {\em Panoramas et Synth{\`e}ses [Panoramas and
  Syntheses]}.
\newblock Soci{\'e}t{\'e} Math{\'e}matique de France, Paris, 2004.

\bibitem{ASF2015}
E.~Arbarello, G.~Sacc{\`a}, and A.~Ferretti.
\newblock Relative prym varieties associated to the double cover of an enriques
  surface.
\newblock {\em Journal of Differential Geometry}, 100(2):191--250, 2015.

\bibitem{Ar74}
M.~Artin.
\newblock Supersingular {$K3$} surfaces.
\newblock {\em Ann. Sci. \'Ecole Norm. Sup. (4)}, 7:543--5671975, 1974.

\bibitem{MR730926}
A.~Beauville.
\newblock Vari\'et\'es {K}\"ahleriennes dont la premi\`ere classe de {C}hern
  est nulle.
\newblock {\em J. Differential Geom.}, 18(4):755--7821984, 1983.

\bibitem{Beau07}
A.~Beauville.
\newblock On the splitting of the {B}loch-{B}eilinson filtration.
\newblock In {\em Algebraic cycles and motives. {V}ol. 2}, volume 344 of {\em
  London Math. Soc. Lecture Note Ser.}, pages 38--53. Cambridge Univ. Press,
  Cambridge, 2007.

\bibitem{BeauvilleVoisin04}
A.~Beauville and C.~Voisin.
\newblock On the {C}how ring of a {$K3$} surface.
\newblock {\em J. Algebraic Geom.}, 13(3):417--426, 2004.

\bibitem{BO78}
P.~Berthelot and A.~Ogus.
\newblock {\em Notes on crystalline cohomology}.
\newblock Princeton University Press, Princeton, N.J.; University of Tokyo
  Press, Tokyo, 1978.

\bibitem{BMS18}
B.~Bhatt, M.~Morrow, and P.~Scholze.
\newblock Integral {$p$}-adic {H}odge theory.
\newblock {\em Publ. Math. Inst. Hautes \'{E}tudes Sci.}, 128:219--397, 2018.

\bibitem{MR2723320}
S.~Bloch.
\newblock {\em Lectures on algebraic cycles}, volume~16 of {\em New
  Mathematical Monographs}.
\newblock Cambridge University Press, Cambridge, second edition, 2010.

\bibitem{Bogomolov78}
F.~A. Bogomolov.
\newblock Hamiltonian {K}\"{a}hlerian manifolds.
\newblock {\em Dokl. Akad. Nauk SSSR}, 243(5):1101--1104, 1978.

\bibitem{BL18}
D.~Bragg and M.~Lieblich.
\newblock Twistor spaces for supersingular {K}3 surfaces.
\newblock {\em arXiv:1804.07282.}, 2018.

\bibitem{BL19}
D.~Bragg and M.~Lieblich.
\newblock Perfect points on genus one curves and consequences for supersingular
  {K}3 surfaces.
\newblock {\em arXiv:1904.04803.}, 2019.

\bibitem{Ch13}
F.~Charles.
\newblock The {T}ate conjecture for {$K3$} surfaces over finite fields.
\newblock {\em Invent. Math.}, 194(1):119--145, 2013.

\bibitem{Ch16}
F.~Charles.
\newblock Birational boundedness for holomorphic symplectic varieties,
  {Z}arhin's trick for {$K3$} surfaces, and the {T}ate conjecture.
\newblock {\em Ann. of Math. (2)}, 184(2):487--526, 2016.

\bibitem{DI87}
P.~Deligne and L.~Illusie.
\newblock Rel\`evements modulo {$p^2$} et d\'ecomposition du complexe de de
  {R}ham.
\newblock {\em Inventiones Mathematicae}, 89(2):247--270, jun 1987.

\bibitem{DKZ2002}
M.~Domokos, S.~Kuzmin, and A.~Zubkov.
\newblock Rings of matrix invariants in positive characteristic.
\newblock {\em Journal of Pure and Applied Algebra}, 176(1):61--80, 2002.

\bibitem{Fak02}
N.~Fakhruddin.
\newblock On the {C}how groups of supersingular varieties.
\newblock {\em Canad. Math. Bull.}, 45(2):204--212, 2002.

\bibitem{FL18}
L.~Fu and Z.~Li.
\newblock Supersingular irreducible symplectic varieties.
\newblock In {\em Rationality of Algebraic Varieties}, Proceedings of the 2019
  Schiermonnikoog conference. 2020.

\bibitem{GJ17}
F.~Gounelas and A.~Javanpeykar.
\newblock Invariants of {F}ano varieties in families.
\newblock {\em Mosc. Math. J.}, 18(2):305--319, 2018.

\bibitem{MR1963559}
M.~Gross, D.~Huybrechts, and D.~Joyce.
\newblock {\em Calabi-{Y}au manifolds and related geometries}.
\newblock Universitext. Springer-Verlag, Berlin, 2003.
\newblock Lectures from the Summer School held in Nordfjordeid, June 2001.

\bibitem{MR1664696}
D.~Huybrechts.
\newblock Compact hyper-{K}\"ahler manifolds: basic results.
\newblock {\em Invent. Math.}, 135(1):63--113, 1999.

\bibitem{HuybrechtsTorelli}
D.~Huybrechts.
\newblock A global {T}orelli theorem for hyperk\"{a}hler manifolds [after {M}.
  {V}erbitsky].
\newblock Number 348, pages Exp. No. 1040, x, 375--403. 2012.
\newblock S\'{e}minaire Bourbaki: Vol. 2010/2011. Expos\'{e}s 1027--1042.

\bibitem{IKO86}
T.~Ibukiyama, T.~Katsura, and F.~Oort.
\newblock Supersingular curves of genus two and class numbers.
\newblock {\em Compositio Math.}, 57(2):127--152, 1986.

\bibitem{kaledin06}
D.~Kaledin, M.~Lehn, and C.~Sorger.
\newblock Singular symplectic moduli spaces.
\newblock {\em Inventiones Mathematicae}, 164(3):591--614, 2006.

\bibitem{Ka14}
E.~Kani.
\newblock Jacobians isomorphic to a product of two elliptic curves and ternary
  quadratic forms.
\newblock {\em J. Number Theory}, 139:138--174, 2014.

\bibitem{KM16}
W.~Kim and K.~Madapusi~Pera.
\newblock 2-adic integral canonical models.
\newblock {\em Forum Math. Sigma}, 4:e28, 34, 2016.

\bibitem{Kisin06}
M.~Kisin.
\newblock Crystalline representations and {$F$}-crystals.
\newblock In {\em Algebraic geometry and number theory}, volume 253 of {\em
  Progr. Math.}, pages 459--496. Birkh\"{a}user Boston, Boston, MA, 2006.

\bibitem{La04a}
A.~Langer.
\newblock Moduli spaces of sheaves in mixed characteristic.
\newblock {\em Duke Math. J.}, 124(3):571--586, 2004.

\bibitem{La04}
A.~Langer.
\newblock Semistable sheaves in positive characteristic.
\newblock {\em Ann. of Math. (2)}, 159(1):251--276, 2004.

\bibitem{LZ20}
Z.~Li and H.~Zou.
\newblock Fourier-{M}ukai partners of abelian varieties over positive
  characteristic fields.
\newblock {\em in preparation}, 2020.

\bibitem{Li15}
C.~Liedtke.
\newblock Supersingular {K}3 surfaces are unirational.
\newblock {\em Inventiones Mathematicae}, 200:979--1014, 2015.

\bibitem{Pe15}
K.~Madapusi~Pera.
\newblock The {T}ate conjecture for {K}3 surfaces in odd characteristic.
\newblock {\em Invent. Math.}, 201(2):625--668, 2015.

\bibitem{MarkmanSurvey}
E.~Markman.
\newblock A survey of {T}orelli and monodromy results for
  holomorphic-symplectic varieties.
\newblock In {\em Complex and differential geometry}, volume~8 of {\em Springer
  Proc. Math.}, pages 257--322. Springer, Heidelberg, 2011.

\bibitem{MatsusakaMumford}
T.~Matsusaka and D.~Mumford.
\newblock Two fundamental theorems on deformations of polarized varieties.
\newblock {\em Amer. J. Math.}, 86:668--684, 1964.

\bibitem{Ma14}
D.~Maulik.
\newblock Supersingular {K}3 surfaces for large primes.
\newblock {\em Duke Math. J.}, 163(13):2357--2425, 2014.
\newblock With an appendix by Andrew Snowden.

\bibitem{Maulik2012}
D.~Maulik and B.~Poonen.
\newblock N\'{e}ron-{S}everi groups under specialization.
\newblock {\em Duke Mathematical Journal}, 161(11):2167--2206, aug 2012.

\bibitem{MMY11}
H.~Minamide, S.~Yanagida, and K.~Yoshioka.
\newblock The wall-crossing behavior for {B}ridgeland's stability conditions on
  abelian and {K}3 surfaces.
\newblock {\em J. Reine Angew. Math.}, 735:1--107, 2018.

\bibitem{MRS18}
G.~Mongardi, A.~Rapagnetta, and G.~Sacc\`a.
\newblock The {H}odge diamond of {O}'{G}rady's six-dimensional example.
\newblock {\em Compos. Math.}, 154(5):984--1013, 2018.

\bibitem{Mukai84}
S.~Mukai.
\newblock Symplectic structure of the moduli space of sheaves on an abelian or
  {$K3$}\ surface.
\newblock {\em Invent. Math.}, 77(1):101--116, 1984.

\bibitem{MR819555}
N.~Nygaard and A.~Ogus.
\newblock Tate's conjecture for {$K3$} surfaces of finite height.
\newblock {\em Ann. of Math. (2)}, 122(3):461--507, 1985.

\bibitem{MR723215}
N.~O. Nygaard.
\newblock The {T}ate conjecture for ordinary {$K3$}\ surfaces over finite
  fields.
\newblock {\em Invent. Math.}, 74(2):213--237, 1983.

\bibitem{OG10}
K.~G. O'Grady.
\newblock Desingularized moduli spaces of sheaves on a {$K3$}.
\newblock {\em J. Reine Angew. Math.}, 512:49--117, 1999.

\bibitem{OG6}
K.~G. O'Grady.
\newblock A new six-dimensional irreducible symplectic variety.
\newblock {\em J. Algebraic Geom.}, 12(3):435--505, 2003.

\bibitem{OGradySurvey}
K.~G. O'Grady.
\newblock Higher-dimensional analogues of {$K3$} surfaces.
\newblock In {\em Current developments in algebraic geometry}, volume~59 of
  {\em Math. Sci. Res. Inst. Publ.}, pages 257--293. Cambridge Univ. Press,
  Cambridge, 2012.

\bibitem{PR13}
A.~Perego and A.~Rapagnetta.
\newblock Deformation of the {O}'{G}rady moduli spaces.
\newblock {\em J. Reine Angew. Math.}, 678:1--34, 2013.

\bibitem{Rap07}
A.~Rapagnetta.
\newblock Topological invariants of {O}'{G}rady's six dimensional irreducible
  symplectic variety.
\newblock {\em Mathematische Zeitschrift}, 256:1--34, 05 2007.

\bibitem{RS78}
A.~N. Rudakov and I.~R. {\v S}afarevi{\v c}.
\newblock Supersingular {$K3$}\ surfaces over fields of characteristic {$2$}.
\newblock {\em Izv. Akad. Nauk SSSR Ser. Mat.}, 42(4):848--869, 1978.

\bibitem{Sh74}
T.~Shioda.
\newblock An example of unirational surfaces in characteristic {$p$}.
\newblock {\em Math. Ann.}, 211:233--236, 1974.

\bibitem{Shioda77}
T.~Shioda.
\newblock Some results on unirationality of algebraic surfaces.
\newblock {\em Math. Ann.}, 230(2):153--168, 1977.

\bibitem{LS06}
C.~Sorger and M.~Lehn.
\newblock La singularit{\'e} de {O}’{G}rady.
\newblock {\em J. Algebraic Geom}, 15:753--770, 2006.

\bibitem{Tate65}
J.~T. Tate.
\newblock Algebraic cycles and poles of zeta functions.
\newblock In {\em Arithmetical {A}lgebraic {G}eometry ({P}roc. {C}onf. {P}urdue
  {U}niv., 1963)}, pages 93--110. Harper \& Row, New York, 1965.

\bibitem{VerbitskyTorelli}
M.~Verbitsky.
\newblock Mapping class group and a global {T}orelli theorem for
  hyperk\"{a}hler manifolds.
\newblock {\em Duke Math. J.}, 162(15):2929--2986, 2013.
\newblock Appendix A by Eyal Markman.

\bibitem{VerbitskyTorelliErrata}
M.~Verbitsky.
\newblock Errata for ``{M}apping class group and a global {T}orelli theorem for
  hyperk\"{a}hler manifolds'' by {M}isha {V}erbitsky.
\newblock {\em Duke Math. J.}, 169(5):1037--1038, 2020.

\bibitem{MR1988456}
C.~Voisin.
\newblock {\em Th{\'e}orie de {H}odge et g{\'e}om{\'e}trie alg{\'e}brique
  complexe}, volume~10 of {\em Cours Sp{\'e}cialis{\'e}s [Specialized
  Courses]}.
\newblock Soci{\'e}t{\'e} Math{\'e}matique de France, Paris, 2002.

\bibitem{Voisin08}
C.~Voisin.
\newblock On the {C}how ring of certain algebraic hyper-{K}{\"a}hler manifolds.
\newblock {\em Pure Appl. Math. Q.}, 4(3, part 2):613--649, 2008.

\bibitem{YangISV19}
Z.~Yang.
\newblock On irreducible symplectic varieties of {$K3^{[n]}$}-type in positive
  characteristic.
\newblock {\em preprint, arXiv:1911.05653}, 2019.

\bibitem{Yoshioka99}
K.~Yoshioka.
\newblock Albanese map of moduli of stable sheaves on abelian surfaces.
\newblock {\em arXiv:9901013}, 1999.

\bibitem{Yo2001}
K.~Yoshioka.
\newblock Moduli spaces of stable sheaves on abelian surfaces.
\newblock {\em Math. Ann.}, 321(4):817--884, 2001.

\end{thebibliography}
\end{document}